\def\b{{\mathbf{b}}}
\def\x{{\mathbf{x}}}
\def\u{{\mathbf{u}}}
\def\v{{\mathbf{v}}}
\def\z{{\mathbf{z}}}
\def\w{{\mathbf{w}}}
\def\n{{\mathbf{n}}}
\def\y{{\mathbf{y}}}
\def\b{{\mathbf{b}}}
\def\X{{\mathbf{X}}}
\def\Y{{\mathbf{Y}}}
\def\A{{\mathbf{A}}}
\def\D{{\mathbf{D}}}
\def\M{{\mathbf{M}}}
\def\N{{\mathbf{N}}}
\def\I{{\mathbf{I}}}
\def\B{{\mathbf{B}}}
\def\V{{\mathbf{V}}}
\def\Z{{\mathbf{Z}}}
\def\W{{\mathbf{W}}}
\def\U{{\mathbf{U}}}
\def\Q{{\mathbf{Q}}}
\def\P{{\mathbf{P}}}
\def\G{{\mathbf{G}}}
\def\Sn{{\mathcal{S}_n}}
\DeclareMathOperator*{\argmin}{arg\,min}
\DeclareMathOperator*{\argmax}{arg\,max}
\newcommand{\mY}{\mathcal{Y}}
\newcommand{\mA}{\mathcal{A}}
\newcommand{\mK}{\mathcal{K}}
\newcommand{\mS}{\mathcal{S}}
\newcommand{\mX}{\mathcal{X}}
\newcommand{\mbS}{\mathbb{S}}
\newcommand{\trace}{\textnormal{\textrm{Tr}}}
\newcommand{\rank}{\textnormal{\textrm{rank}}}
\newcommand{\diag}{\textnormal{\textrm{diag}}}
\newcommand{\support}{\textnormal{\textrm{support}}}
\newcommand{\reals}{\mathbb{R}}
\newcommand{\sign}{\textnormal{\textrm{sign}}}
\newcommand{\breg}{B}
\newcommand{\bLambda}{\boldsymbol{\Lambda}}
\newtheorem{theorem}{Theorem}%  meant for continuous numbers
\newtheorem{lemma} {Lemma}
\newtheorem{definition} {Definition}
\newtheorem{corollary} {Corollary}
\newtheorem{assumption} {Assumption}
\newtheorem{remark} {Remark}
\begin{document}

\title[Article Title]{Low-Rank Mirror-Prox Methods for Nonsmooth and Low-Rank Matrix Optimization Problems\footnote{A preliminary version of this work which includes only the results for the Euclidean setup as appeared in the conference NeurIPS 2021 \cite{kaplan2021low}. This version substantially extends it by considering also the mirror-prox method with exponentiated gradient updates which enjoys better theoretical convergence rates (in terms of Lipschitz constants) and requires a different and more involved analysis. This version also includes additional numerical experiments.}}

%%=============================================================%%
%% GivenName	-> \fnm{Joergen W.}
%% Particle	-> \spfx{van der} -> surname prefix
%% FamilyName	-> \sur{Ploeg}
%% Suffix	-> \sfx{IV}
%% \author*[1,2]{\fnm{Joergen W.} \spfx{van der} \sur{Ploeg} 
%%  \sfx{IV}}\email{iauthor@gmail.com}
%%=============================================================%%

\author*[1]{\fnm{Dan} \sur{Garber}}\email{dangar@technion.ac.il}

\author[1]{\fnm{Atara} \sur{Kaplan}}\email{ataragold@campus.technion.ac.il}
\equalcont{This author contributed equally to this work.}

%\author[1,2]{\fnm{Third} \sur{Author}}\email{iiiauthor@gmail.com}
%\equalcont{These authors contributed equally to this work.}

\affil[1]{\orgdiv{Faculty of Data and Decision Sciences}, \orgname{Technion}, \orgaddress{  \postcode{32000} \city{Haifa}, \country{Israel}}}

%\author*[1,2]{\fnm{First} \sur{Author}}\email{iauthor@gmail.com}
%
%\author[2,3]{\fnm{Second} \sur{Author}}\email{iiauthor@gmail.com}
%\equalcont{These authors contributed equally to this work.}
%
%\author[1,2]{\fnm{Third} \sur{Author}}\email{iiiauthor@gmail.com}
%\equalcont{These authors contributed equally to this work.}
%
%\affil*[1]{\orgdiv{Department}, \orgname{Organization}, \orgaddress{\street{Street}, \city{City}, \postcode{100190}, \state{State}, \country{Country}}}
%
%\affil[2]{\orgdiv{Department}, \orgname{Organization}, \orgaddress{\street{Street}, \city{City}, \postcode{10587}, \state{State}, \country{Country}}}
%
%\affil[3]{\orgdiv{Department}, \orgname{Organization}, \orgaddress{\street{Street}, \city{City}, \postcode{610101}, \state{State}, \country{Country}}}

%%==================================%%
%% Sample for unstructured abstract %%
%%==================================%%

\abstract{
Low-rank and nonsmooth matrix optimization problems capture many fundamental tasks in statistics and machine learning.
While significant progress has been made in developing efficient methods for \textit{smooth} problems that avoid computing expensive high-rank SVDs, advances for nonsmooth problems have been slow paced. 

In this paper we consider a standard convex relaxation: minimizing a convex nonsmooth objective function over the spectrahedron (set of real positive-semidefinite matrices with unit trace), under a  plausible strict complementarity (SC) condition. Following an observation that, even arbitrarily close to a low-rank optimal solution which satisfies SC, the standard projected subgradient descent method may fail to produce low-rank iterates,  we focus on nonsmooth objectives that can be written as a maximum of smooth functions and consider  the corresponding saddle-point formulation.
Mainly, we prove that  (approximated) variants of two popular \textit{mirror-prox} methods: the Euclidean extragradient method and mirror-prox with matrix exponentiated gradient updates,  when initialized with a ``warm-start'', converge to an optimal solution with the standard $O(1/t)$ rate, while requiring only two \textit{low-rank} SVDs per iteration. For the Euclidean method we also consider relaxed versions of SC which yield a trade-off between the rank of SVDs and the radius of the ball in which we need to initialize.
We support our theoretical findings with empirical experiments on several tasks, demonstrating both the plausibility of the SC assumption, and the efficient convergence of our proposed mirror-prox variants.}

\keywords{low-rank matrix optimization, low-rank matrix recovery, mirror-prox, extragradient method, saddle-point optimization}

%%\pacs[JEL Classification]{D8, H51}

%%\pacs[MSC Classification]{35A01, 65L10, 65L12, 65L20, 65L70}

\maketitle

\section{Introduction}

Low-rank and nonsmooth matrix optimization problems have many important applications in statistics, machine learning, and related fields, such as \textit{sparse PCA} \cite{sparsePCA1, sparsePCA2}, \textit{robust PCA} \cite{robustPCA1, robustPCA2, robustPCA3, robustPCA4, robustPCA5},  \textit{phase synchronization} \cite{phaseSyncronization1, phaseSyncronization2, phaseSyncronization3}, \textit{community detection and stochastic block models} \cite{abbe2017community}\footnote{In \cite{phaseSyncronization1, phaseSyncronization2, phaseSyncronization3} and \cite{abbe2017community} the authors consider SDPs with linear objective function and affine constraints of the form $\mA(\X)=\b$. By incorporating the linear constraints into the objective function via a $\ell_2$ penalty term of the form $\lambda\Vert{\mA(\X)-\b}\Vert_2$, $\lambda > 0$, we obtain a nonsmooth objective function, though choosing an appropriate value of $\lambda$ may be difficult.},  \textit{low-rank and sparse covariance matrix recovery} \cite{lowRankAndSparse}, \textit{robust matrix completion} \cite{klopp2017robust, chen2011robust}, and more. For many of these problems, convex relaxations, in which one replaces the nonconvex low-rank constraint with a trace-norm constraint, have been demonstrated in numerous papers to be highly effective both in theory (under suitable assumptions) and empirically (see references above).
These convex relaxations can be formulated as the following general nonsmooth optimization problem:
\begin{align} \label{nonSmoothProblem}
\min_{\X\in\Sn} g(\X),
\end{align} 
where $g:\mathbb{S}^n\rightarrow\reals$ is convex but nonsmooth, and $\mathcal{S}_n=\{\X\in\mathbb{S}^n\ \vert\ \trace(\X)=1,\ \X\succeq0\}$ is the spectrahedron in $\mathbb{S}^n$, $\mbS^n$ being the space of $n\times n$ real symmetric matrices.

Problem \eqref{nonSmoothProblem}, despite being convex, is notoriously difficult to solve in large scale. Perhaps the simplest and most general approach applicable to it are \textit{mirror decent} (MD) methods \cite{beckOptimizationBook, sebastienMD}. However, the MD setups of interest for Problem \eqref{nonSmoothProblem}, which are mostly the Euclidean setup and the von Neumann entropy setup (see discussions in the sequel), require in worst case $O(n^3)$ runtime per iteration. In many applications $g(\X)$ follows a composite model, i.e., $g(\X) = h(\X) + w(\X)$, where $h(\cdot)$ is convex and smooth and $w(\cdot)$ is convex and nonsmooth but admits a simple structure (e.g., nonsmooth regularizer). For such composite objectives, without the spectrahedron constraint, proximal methods such as FISTA \cite{FISTA} or splitting methods such as ADMM  \cite{ADMM} are often very effective. However, with the spectrahderon constraint, all such methods require on each iteration to apply a subprocedure (e.g., computing the proximal mapping) which in worst case amounts to at least $O(n^3)$ runtime. 
A third type of off-the-shelf methods include those which are based on the \textit{conditional gradient} method (aka Frank-Wolfe algorithm) and adapted to nonsmooth problems, see for instance \cite{odor2016frank, lowRankAndSparseOurs, NEURIPS2020_8f468c87, locatello2019stochastic}. The advantage of such methods is that  no expensive high-rank SVD computations are needed. Instead, only a single leading eigenvector computation (i.e., a rank-one SVD) per iteration is required. However, these methods typically suffer from slow convergence rates compared to proximal methods \cite{lan2013complexity}. 
%However, whenever the number of iterations is not small, these methods still require to store high-rank matrices in memory, even when the optimal solution is low-rank (there exists certain conditional gradient-type methods which can avoid storing high-rank matrices, however these have other limitations, see further discussion in Section \ref{sec:addrelatedwork}). Thus, to conclude, standard first-order methods for Problem \eqref{nonSmoothProblem} require in worst case $O(n^3)$ runtime per iteration and/or to store high-rank matrices in memory.

In the recent works \cite{garberLowRankSmooth, garberStochasticLowRank} it was established that for smooth objective functions, the high-rank SVD computations required for Euclidean projections onto the spectrahedron in standard Euclidean gradient methods, can be replaced with low-rank SVDs in the  close proximity of  a low-rank optimal solution. This is significant since the runtime to compute a rank-$r$ SVD of a  $n\times n$ matrix using efficient iterative methods typically scales with $rn^2$ (and further improves when the matrix is sparse or admits a low-rank factorization), instead of $n^3$ for a full-rank SVD.
These results assume the existence of eigen-gaps in the gradient of the optimal solution, which we refer to in this work as a \textit{generalized strict complementarity condition} (see definition in the sequel).  
These results also hinge on a unique property of the Euclidean projection onto the spectrahedron. The projection onto the spectrahedron of a matrix $\X\in\mathbb{S}^n$, which admits an eigen-decomposition $\X=\sum_{i=1}^n\lambda_i\v_i\v_i^{\top}$, is given by 
\begin{align}\label{eq:euclidProj}
\Pi_{\mathcal{S}_n}[\X]=\sum_{i=1}^n\max\{0,\lambda_i-\lambda\}\v_i\v_i^{\top},
\end{align}
where $\lambda\in\reals$ is the unique scalar satisfying  $\sum_{i=1}^n\max\{0,\lambda_i-\lambda\}=1$. This operation thus truncates all eigenvalues that are smaller than $\lambda$, while leaving the eigenvectors unchanged, thereby returning a matrix with rank equal to the number of eigenvalues greater than $\lambda$. Importantly, when the projection of $\X$ onto $\mS_n$ is of rank $r$, only the first $r$ components in the eigen-decomposition of $\X$ are required to compute it in the first place, and thus, only a rank-$r$ SVD of $\X$ is required.
In other words and simplifying, \cite{garberLowRankSmooth, garberStochasticLowRank}  show that under (generalized) strict complementary, at the proximity of an optimal solution of rank $r$, the exact Euclidean  projection  equals the rank-$r$ truncated projection given by:
\begin{align} \label{truncatedProjection}
\widehat{\Pi}_{\mathcal{S}_n}^r[\X]:=\Pi_{\mathcal{S}_n}\left[\sum_{i=1}^r\lambda_i\v_i\v_i^{\top}\right].
\end{align}

In our recent work \cite{garber2020efficient}, similar results were obtained for smooth objective functions, when using Non-Euclidean von Neumann entropy-based gradient methods, a.k.a  matrix exponentiated gradient methods \cite{BregmanMatrices}. The importance of these methods lies in the fact that they allow to measure the Lipschitz parameters (either of the function or its gradients) with respect to the matrix spectral norm, which can lead to significantly better convergence rates (e.g., in terms of dependence on the dimension), than when measuring these with respect to the Euclidean norm. A standard matrix exponentiated gradient (MEG) step from a matrix $\X\in\mathbb{S}^n$ and with step-size $\eta>0$, can be written as
%In our recent work \cite{garber2020efficient}, similar results were obtained for smooth objective functions, when using Non-Euclidean von Neumann entropy-based gradient methods, a.k.a  matrix exponentiated gradient methods \cite{BregmanMatrices}. The importance of these methods lies in the fact that they allow to measure the Lipschitz parameters (either of the function or its gradients) with respect to the matrix spectral norm, which can lead to significantly better convergence rates (e.g., in terms of dependence on the dimension), than when measuring these with respect to the Euclidean norm \textcolor{red}{(see \cite{Nemirovski, garber2020efficient} and Section \ref{sec:typesOfBregDistances}). 
%However, as we will demonstrate, implementing methods that use matrix exponentiated gradient updates is somewhat more complex than Euclidean methods, as they require approximating steps to maintain low-rank matrices.} A standard matrix exponentiated gradient (MEG) step from a matrix $\X\in\mathbb{S}^n$ and with step-size $\eta>0$, can be written as
\begin{align} \label{exponentiatedGradientUpdate}
\X_{+}:=\frac{1}{\tau}\exp(\log(\X)-\eta\nabla{}g(\X)),\qquad\tau:=\trace(\exp(\log(\X)-\eta\nabla{}g(\X))).
\end{align}
Computing the matrix logarithm and matrix exponent for the MEG update requires computing a full-rank SVD\footnote{Only one SVD is required since $\X$ is stored in a factored form. See discussion in Section \ref{sec:MEG}.} and returns a full-rank matrix, unlike the truncation of the lower eigenvalues in Euclidean projections. Nevertheless, as shown in \cite{garber2020efficient}, under a  strict complementarity assumption, and when close to an optimal rank-$r$ solution, it is possible to approximate these steps with updates that only require rank-$r$ SVDs, while sufficiently controlling the errors resulting from the approximations. These approximated steps can be written as
\begin{align}\label{lowrankexponentiatedGradientUpdate}
\widehat{\X}_{+}^r:=\frac{1-\varepsilon}{\tau^r}\Y^r+\frac{\varepsilon}{n-r}(\I-\V^r{\V^r}^{\top}),
\end{align}
where $\varepsilon\in[0,1]$, $\Y_r$, which admits the eigen-decomposition $\Y^r=\V^r\bLambda^r{\V^r}^{\top}$, is the rank-$r$ approximation of $\Y:=\exp(\log(\X)-\eta\nabla{}g(\X))=\V\bLambda\V^{\top}$, and $\tau^r=\trace(\bLambda^r)$. That is, somewhat similarly to the low-rank Euclidean projection in \eqref{truncatedProjection}, the low-rank update in \eqref{lowrankexponentiatedGradientUpdate} only uses the top-$r$ components in the eigen-decomposition of matrix $\exp(\log\X-\eta\nabla{}g(\X))$ (as opposed to the exact MEG step \eqref{exponentiatedGradientUpdate} which requires all components), however, differently from the Euclidean case, the lower eigenvalues are not truncated to zero but to the same (small) value $\varepsilon/(n-r)$.

Extending the results of \cite{garberLowRankSmooth, garberStochasticLowRank,garber2020efficient} to the nonsmooth setting is difficult since the smoothness assumption is crucial to the analysis. 
Moreover, while  \cite{garberLowRankSmooth, garberStochasticLowRank,garber2020efficient} rely on certain eigen-gaps in the gradients at optimal points, for nonsmooth problems, since the subdifferential set is often not a singleton, it is not likely that a similar eigen-gap property holds for all subgradients of an optimal solution. In fact,  as we show in Section \ref{sec:subgradfail}, the standard projected subgradient descent method, which is arguably the simplest and most general method applicable to Problem \ref{nonSmoothProblem}, may fail to produce low-rank iterates even when arbitrarily close to a low-rank optimal solution which satisfies strict complementarity.

Motivated by the inability of the projected subgradient descent method to guarantee low-rank iterates near minimizers, we consider an additional mild assumption that Problem \eqref{nonSmoothProblem} can be formulated as a smooth convex-concave saddle-point problem, i.e., the nonsmooth term can be written as a maximum over (possibly infinite number of) smooth convex functions.
We then establish that results in the spirit of \cite{garberLowRankSmooth, garberStochasticLowRank,garber2020efficient} (low-rank iterates near minimizers) could be obtained when considering the corresponding saddle-point formulation. Concretely, we show that if a strict complementarity (SC) assumption holds for a low-rank optimal solution (see Assumption \ref{ass:strictcompNonSmooth} in the sequel), \textit{(approximated) mirror-prox methods} for smooth convex-concave saddle-point problems (see Algorithm \ref{alg:ApproxMP} below), which are either based on Euclidean projected gradient updates or MEG updates, and when initialized in the proximity of  an optimal solution, converge with their original ergodic (i.e., w.r.t. the average of iterates) convergence rate of $O(1/t)$, while requiring only two low-rank SVDs per iteration\footnote{note that mirror-prox methods compute two primal steps on each iteration, and thus two SVDs are needed per iteration.}. It is important to recall that while mirror-prox methods require two SVDs per iteration, they have the benefit of a fast $O(1/t)$ convergence rate, while simpler saddle-point methods such as mirror-descent-based, only guarantee a $O(1/\sqrt{t})$ rate \cite{sebastienMD}.
Our contributions can be summarized  as follows.
\paragraph*{Contributions (informally stated):}
\begin{itemize}

\item
We prove that even under (standard) strict complementarity, the projected subgradient method, when initialized with a ``warm-start", may produce iterates with rank higher than that of the optimal solution. This phenomenon  motivates our saddle-point approach. See Section \ref{sec:subgradfail}. %Lemma \ref{lemma:negativeExampleNonSmooth}.

\item
We suggest a generalized strict complementarity (GSC) condition for saddle-point problems and prove that when $g(\cdot)$ --- the objective function in Problem \eqref{nonSmoothProblem}, admits a highly popular saddle-point structure (one which captures all applications we mentioned in this paper), GSC w.r.t. an optimal solution to  Problem  \eqref{nonSmoothProblem} implies GSC (with the same parameters) w.r.t. a corresponding optimal solution of the equivalent saddle-point problem (the other direction always holds). See Section \ref{sec:smooth2Saddle}.

\item
\textbf{Main result:} we prove that for a smooth convex-concave saddle-point problem and an optimal solution which satisfies SC, mirror-prox methods \cite{extragradientK,Nemirovski} such as the Euclidean extragradient method and mirror-prox with approximated MEG updates, when initialized with a ``warm-start" (i.e., a point within a certain distance from the saddle-point), converge with their original rate of $O(1/t)$ while requiring only two low-rank SVDs per iteration. Moreover, for the Euclidean extragradient method we prove that a weaker GSC assumption is sufficient, and that GSC facilitates a precise and powerful tradeoff: increasing the rank of SVD computations (beyond the rank of the optimal solution) can significantly increase the radius of the ball in which the method needs to be initialized. See Theorem \ref{theroem:allPutTogether} and Theorem \ref{thm:allPutTogetherVonNeumann}. 

\item
From a more practical point of view, we discuss simple and efficient procedures for verifying that I. for the Euclidean extragradient method our low-rank updates indeed coincide with the \textit{exact} (full-rank SVD based)  updates, and II.  for the mirror-prox method with MEG updates our low-rank updates provide sufficient approximation for the exact (full-rank SVD based) MEG updates. These procedures could  be used to certify the efficient convergence of the methods during runtime. See Sections \ref{sec:certificate} and  \ref{sec:certificateMP}.

\item We present extensive numerical evidence that demonstrate both the plausibility of the strict complementarity assumption in various nonsmooth low-rank matrix recovery tasks, and  the efficient convergence of our proposed  mirrox-prox methods. See Section \ref{sec:expr}.
\end{itemize}

As already mentioned earlier, the mirror-prox method with MEG updates demonstrates better theoretical dependence on the Lipschitz parameters of the gradient, as these are measured with respect to the spectral norm, in contrast to the Frobenius norm in the Euclidean method (see Section \ref{sec:typesOfBregDistances} and further discussions in \cite{Nemirovski, garber2020efficient}). On the other-hand, as it shall be evident in the sequel, the implementation of the mirror-prox method with low-rank MEG updates is somewhat more complex than the Euclidean method, as it involves explicitly controlling certain approximation errors in the computation of its update steps.

\subsection{Additional related work}\label{sec:addrelatedwork}
Since, as in the works  \cite{garberLowRankSmooth, garberStochasticLowRank} mentioned before which deal with smooth objectives,  strict complementarity plays a key role in our analysis, we refer the interested reader to the recent works \cite{garber2019linear, spectralFrankWolfe, ding2020kfw} which also exploit this property for efficient smooth and convex optimization over the spectrahedron.  In \cite{relatedWork4} the authors show that strict complementarity is sufficient for bounding the measurement and optimization errors caused by noisy data in semidefinite programs.
In \cite{relatedWork2} the authors suggest a primal-dual method to solve certain semidefinite programs with optimal storage using low-rank updates under strict complementarity.
Strict complementarity has also played an instrumental role in two recent and very influential works which used it to prove linear convergence rates for proximal gradient methods \cite{zhou2017unified, drusvyatskiy2018error}. 
It is also a standard generic assumption that is assumed in many constrained optimization problems, see for instance \cite{strictComplementarity}.

For the smooth setting there have been many more advances in developing efficient methods for solving smooth low-rank matrix optimization problems.
In \cite{relatedWork5} the authors suggest a conditional gradient method with minimal storage requirements by using a matrix sketching technique. Also the work \cite{relatedWork6} uses a matrix sketching technique together with a primal-dual method which requires storing only low-rank matrices in memory to solve semidefinite programming problems.
In \cite{relatedWork1} the authors proposed an efficient method for certain nonsmooth low-rank spectral optimization problems via gauge duality which only uses low-rank updates.

Besides convex relaxations such as Problem \eqref{nonSmoothProblem}, considerable advances have been made in the past several yeas in developing efficient \textit{nonconvex} methods with global convergence guarantees for low-rank matrix problems. 
In \cite{nonConvexFactorizedAssumptions} the authors consider semidefinite programs and prove that under a smooth manifold assumption on the constraints, such methods converge to the optimal global solution.
In \cite{nonconvexFactorizedGlobal} the authors prove global convergence of factorized nonconvex gradient descent from a ``warm-start'' initialization point for non-linear smooth minimization on the positive semidefinite cone.  Very recently, \cite{charisopoulos2021low} has established, under statistical conditions, fast convergence results from ``warm-start'' initialization of nonconvex first-order methods, when applied to nonsmooth nonconvex matrix recovery problems which are based on the explicit factorization of the low-rank matrix. A result of similar flavor concerning nonsmooth and nonconvex formulation of robust recovery of low-rank matrices from random linear measurements was presented in \cite{li2020nonconvex}. In \cite{relatedWork7}, under certain geometric assumptions, the authors propose a subgradient method with fast convergence for a nonsmooth formulation of certain low-rank matrix recovery problems.
Several recent works have also considered nonconvex low-rank regularizers which result in nonconvex nonsmooth optimization problems, but guarantee convergence only to a stationary point \cite{lu2014generalized, yao2018large}. 
The work \cite{relatedWork8} provides an overview of the statistical models that enable
efficient nonconvex optimization with provable global convergence.

\subsection{Organization of this paper}
In Section \ref{sec:strictComp} we present the strict complementarity condition for Problem \eqref{nonSmoothProblem} and its relaxed version (generalized strict complementarity) which underly all of our results. In this section we also provide a negative result:  strict complementarity alone is not sufficient to guarantee low-rank iterates for the projected subgradient descent method. This observation motivates our investigation of saddle-point formulations of Problem \eqref{nonSmoothProblem}, which is discussed in Section \ref{sec:smooth2Saddle}. Section \ref{sec:approximatedMP} introduces the mirror-prox method for saddle-point optimization and gives a template for constructing \textit{approximate mirror-prox} algorithms, which will be used to construct our low-rank mirror-prox methods. Section \ref{sec:EucledianCase} introduces our first main result: the Euclidean extragradient method with low-rank updates and its convergence guarantees. Section \ref{sec:MEG} introduces our second main result: a mirror-prox method with low-rank matrix exponentiated gradient updates and its convergence guarantees. Finally, Section \ref{sec:expr} brings empirical evidence in support of our our theoretical findings. For ease of presentation some of the proofs are deferred to the appendix.

\section{Strict Complementarity for Nonsmooth Optimization and Difficulty of Using Low-Rank Projected Subgradient Steps}
\label{sec:strictComp}

The analysis of the nonsmooth Problem \eqref{nonSmoothProblem} naturally depends on  certain subgradients of an optimal solution which, in many aspects, behave like the gradients of smooth functions. The existence of such a subgradient is guaranteed from the first-order optimality condition for constrained convex minimization problems:
\begin{lemma}[first-order optimality condition, see \cite{beckOptimizationBook}] \label{optCondition}
Let $g:\mathbb{S}^n\rightarrow\reals$ be a convex function. Then $\X^*\in\Sn$ minimizes $g$ over $\Sn$ if and only if there exists a subgradient $\G^*\in\partial g(\X^*)$ such that $\langle \X-\X^*,\G^*\rangle\ge0$ for all $\X\in\Sn$.
\end{lemma}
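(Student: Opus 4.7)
The statement is the standard first-order optimality condition for constrained convex minimization, specialized to the spectrahedron. My plan is to prove the two directions separately: the sufficiency direction is a one-line consequence of the subgradient inequality, while the necessity direction requires invoking the structure of the normal cone of $\Sn$ and a subdifferential sum rule.

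For the sufficiency direction, suppose some $\G^* \in \partial g(\X^*)$ satisfies $\langle \X-\X^*,\G^*\rangle \geq 0$ for every $\X \in \Sn$. The defining subgradient inequality gives $g(\X) \geq g(\X^*) + \langle \X-\X^*,\G^*\rangle \geq g(\X^*)$ for all $\X \in \Sn$, so $\X^*$ is optimal. Nothing about the spectrahedron is used here beyond $\X \in \Sn$.

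For the necessity direction, I would rewrite Problem \eqref{nonSmoothProblem} as the unconstrained minimization of $g+\delta_{\Sn}$, where $\delta_{\Sn}$ is the indicator of the spectrahedron. Optimality of $\X^*$ then reads $0 \in \partial(g+\delta_{\Sn})(\X^*)$. Since $g$ is finite-valued on all of $\mbS^n$ (hence continuous at $\X^*$) and $\Sn$ is a nonempty convex set, the standard Moreau--Rockafellar sum rule applies and gives $\partial(g+\delta_{\Sn})(\X^*) = \partial g(\X^*) + N_{\Sn}(\X^*)$, where $N_{\Sn}(\X^*) = \{\N : \langle \X-\X^*,\N\rangle \leq 0,\ \forall \X\in\Sn\}$ is the normal cone. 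Thus there exist $\G^* \in \partial g(\X^*)$ and $\N \in N_{\Sn}(\X^*)$ with $\G^* = -\N$, which implies $\langle \X-\X^*,\G^*\rangle = -\langle \X-\X^*,\N\rangle \geq 0$ for every $\X \in \Sn$, as required.

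The main (minor) obstacle is justifying the sum rule, i.e., the constraint qualification: one must know that either $g$ is continuous at a feasible point or that $\textrm{ri}(\textrm{dom}(g)) \cap \textrm{ri}(\Sn) \neq \emptyset$. Because $g$ is convex and real-valued on the whole space $\mbS^n$, this is automatic. If one preferred to avoid the sum rule machinery entirely, an alternative route is via directional derivatives: optimality implies $g'(\X^*;\X-\X^*) \geq 0$ for all $\X \in \Sn$, and since $g'(\X^*;\D) = \max_{\G\in\partial g(\X^*)}\langle \G,\D\rangle$ with $\partial g(\X^*)$ nonempty, convex and compact (finite-valued convex $g$), a minimax/separation argument on $\partial g(\X^*) \times (\Sn - \X^*)$ produces the required single $\G^*$ working uniformly over $\X \in \Sn$.
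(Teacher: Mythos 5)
Your proof is correct: the sufficiency direction via the subgradient inequality and the necessity direction via $0\in\partial(g+\delta_{\Sn})(\X^*)$, the Moreau--Rockafellar sum rule (with the constraint qualification automatic since $g$ is real-valued, hence continuous, on $\mbS^n$), and the normal-cone characterization is precisely the standard argument. The paper itself offers no proof of Lemma \ref{optCondition} and simply defers to \cite{beckOptimizationBook}, where this is the argument given, so your proposal matches the intended approach.
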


For some $\G^*\in\partial g(\X^*)$ which satisfies the first-order optimality condition for an optimal solution $\X^*$, if the multiplicity of the smallest eigenvalue equals $r^*=\rank(\X^*)$, then it can be shown that the optimal solution satisfies a strict complementarity assumption. The equivalence between a standard strict complementarity assumption of some low-rank optimal solution of a smooth optimization problem over the spectrahedron and an eigen-gap in the gradient of the optimal solution was established in \cite{spectralFrankWolfe}. We generalize this equivalence to also include nonsmooth problems. The proof follows similar arguments and is given in Appendix \ref{sec:appendix:proofLemma2}. 
Throughout the paper we let $\lambda_i(\X)$ denote the $i$th largest eigenvalue of a symmetric real matrix $\X\in\mbS^n$.

\begin{definition}[strict complementarity] An optimal solution $\X^*\in\Sn$ of rank $r^*$ for Problem \eqref{nonSmoothProblem} satisfies the strict complementarity assumption with parameter $\delta >0$, if there exists an optimal solution of the dual problem\footnote{Denote $q(\Z,s)=\min_{\X\in\mathbb{S}^n} \lbrace g(\X)+s(1-\trace(\X))-\langle\Z,\X\rangle\rbrace$.
The dual problem to Problem \eqref{nonSmoothProblem} can be written as:
$ \max_{\lbrace\Z\succeq0,\ s\in\reals\rbrace} \lbrace q(\Z,s) \ \vert \ (\Z,s)\in \textrm{dom}(q)\rbrace$.}
 $(\Z^*,s^*)\in\mbS^n\times\reals$ such that $\rank(\Z^*)=n-r^*$, and $\lambda_{n-r^*}(\Z^*)\ge\delta$. 
\end{definition}

\begin{lemma} \label{lemma:kkt}
Let $\X^*\in\Sn$ be a rank-$r^*$ optimal solution to Problem \eqref{nonSmoothProblem}. $\X^*$ satisfies the (standard) strict complementarity assumption with parameter $\delta>0$, if and only if there exists a subgradient $\G^*\in\partial g(\X^*)$ such that $\langle \X-\X^*,\G^*\rangle\ge0$ for all $\X\in\mathcal{S}_n$ and $\lambda_{n-r^*}(\G^*)-\lambda_{n}(\G^*)\ge\delta$.
\end{lemma}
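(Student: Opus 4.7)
The plan is to prove the equivalence via the KKT conditions associated with Problem~\eqref{nonSmoothProblem}. Since $\Sn$ is defined by the PSD cone constraint and a single linear (trace) constraint, and both constraint qualifications hold at $\X^*$, there exist dual variables $(\Z^*, s^*)$ with $\Z^* \succeq 0$ such that (i) $\Z^* \X^* = 0$ (complementary slackness between the PSD constraint and its multiplier), and (ii) the Lagrangian is stationary, i.e., some $\G^*\in\partial g(\X^*)$ satisfies
\begin{equation*}
\G^* = s^*\I + \Z^*.
\end{equation*}
Conversely, any such triple $(\G^*, \Z^*, s^*)$ satisfying $\G^* \in \partial g(\X^*)$, $\Z^*\succeq 0$, $\Z^*\X^* = 0$, and $\G^* = s^*\I + \Z^*$, certifies both primal optimality of $\X^*$ and dual feasibility/optimality of $(\Z^*, s^*)$. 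This symmetric KKT characterization is the workhorse of the proof, and the rest is spectral bookkeeping.

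For the forward direction, assume SC holds with dual optimal $(\Z^*, s^*)$ of rank $n - r^*$ and $\lambda_{n-r^*}(\Z^*) \ge \delta$. Take the $\G^*$ produced by stationarity. Since shifting by a multiple of the identity preserves eigenvectors and merely translates eigenvalues, $\lambda_i(\G^*) = s^* + \lambda_i(\Z^*)$ for every $i$. Because $\rank(\Z^*) = n - r^*$, the bottom $r^*$ eigenvalues of $\Z^*$ vanish, so $\lambda_n(\G^*) = s^*$, while $\lambda_{n-r^*}(\G^*) = s^* + \lambda_{n-r^*}(\Z^*) \ge s^* + \delta$. Subtracting gives $\lambda_{n-r^*}(\G^*) - \lambda_n(\G^*) \ge \delta$, and the first-order optimality inequality $\langle \X - \X^*, \G^*\rangle \ge 0$ for all $\X \in \Sn$ follows directly from the KKT identity together with $\Z^* \succeq 0$, $\Z^*\X^* = 0$, and the constraint $\trace(\X) = \trace(\X^*) = 1$.

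For the converse, assume a $\G^* \in \partial g(\X^*)$ exists satisfying the stated inequality and eigen-gap. Set $s^* := \lambda_n(\G^*)$ and $\Z^* := \G^* - s^*\I$; then $\Z^* \succeq 0$ by construction. The crux is to show $\rank(\Z^*) = n - r^*$, which requires the bottom $r^*$ eigenvalues of $\G^*$ to all coincide with $\lambda_n(\G^*)$. To see this, test the optimality inequality against $\X = \v\v^\top$ for $\v$ a unit bottom eigenvector of $\G^*$ to obtain $\langle \X^*, \G^* \rangle \le \lambda_n(\G^*)$, while the Rayleigh bound applied to each eigenvector of $\X^*$ gives $\langle \X^*, \G^* \rangle \ge \lambda_n(\G^*)\trace(\X^*) = \lambda_n(\G^*)$. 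Equality forces the range of $\X^*$ to lie inside the eigenspace of $\G^*$ corresponding to $\lambda_n(\G^*)$, and since $\rank(\X^*) = r^*$, this multiplicity is at least $r^*$. The hypothesis $\lambda_{n-r^*}(\G^*) - \lambda_n(\G^*) \ge \delta > 0$ then prevents the multiplicity from exceeding $r^*$, so it equals $r^*$ exactly. Consequently $\rank(\Z^*) = n - r^*$, $\lambda_{n-r^*}(\Z^*) = \lambda_{n-r^*}(\G^*) - \lambda_n(\G^*) \ge \delta$, and $\Z^*\X^* = 0$ because $\X^*$ is supported on the kernel of $\Z^*$. This realizes $(\Z^*, s^*)$ as a dual optimal pair certifying SC with parameter $\delta$.

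The main obstacle is precisely the ``multiplicity pinning'' step in the converse: extracting from the scalar inequality $\langle \X - \X^*, \G^*\rangle \ge 0$ the structural fact that the range of $\X^*$ is contained in the bottom eigenspace of $\G^*$, so that the strict eigen-gap above $\lambda_n(\G^*)$ translates exactly into the rank condition on $\Z^*$. Once this range containment is established via the Rayleigh-quotient argument above, the remainder is routine linear algebra.
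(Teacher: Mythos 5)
Your proof is correct and rests on the same mechanism as the paper's: Lagrangian duality/KKT at $\X^*$ giving the identity $\G^*=\Z^*+s^*\I$ together with complementary slackness, and then translating the eigen-gap of $\Z^*$ into the eigen-gap of $\G^*$ by the identity-shift of the spectrum. The only notable difference is in the converse direction: the paper argues tersely through the KKT-produced pair and the identity $\lambda_{n-r^*}(\Z^*)=\lambda_{n-r^*}(\G^*)-\lambda_n(\G^*)$, leaving the verification that $\rank(\Z^*)=n-r^*$ and the dual optimality of the certificate largely implicit, whereas you explicitly construct $(\Z^*,s^*)=(\G^*-\lambda_n(\G^*)\I,\,\lambda_n(\G^*))$, pin the multiplicity of $\lambda_n(\G^*)$ to exactly $r^*$ via the Rayleigh-quotient equality forced by $\langle\X-\X^*,\G^*\rangle\ge0$, and deduce $\Z^*\X^*=0$ and $\lambda_{n-r^*}(\Z^*)\ge\delta$ from that; dual optimality of the constructed pair then follows from standard KKT sufficiency (which you assert, correctly, at the level of detail the paper itself uses). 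This makes your converse somewhat more self-contained than the paper's, but it is the same overall route rather than a genuinely different one.
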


We also consider a weaker and more general assumption than strict complementarity. Namely, we assume generalized strict complementarity (GSC) holds with respect to at least one subgradient of the optimal solution for which the first-order optimality condition holds.

\begin{assumption}[generalized strict complementarity]\label{ass:strictcompNonSmooth}
We say an optimal solution $\X^*$ to Problem \eqref{nonSmoothProblem} satisfies the generalized strict complementarity assumption with parameters $r,\delta$, if there exists a subgradient $\G^*\in\partial g(\X^*)$ such that $\langle \X-\X^*,\G^*\rangle\ge0$ for all $\X\in\mathcal{S}_n$ and $\lambda_{n-r}(\G^*)-\lambda_{n}(\G^*) \geq \delta$.
\end{assumption}

In \cite{garberLowRankSmooth} the author presents several characteristic properties of the gradient of the  optimal solution in optimization problems over the spectrahedron. 
Using the existence of subgradients which satisfy the condition in Lemma \ref{optCondition}, we can extend these properties also to the nonsmooth setting.
The following lemma shows that GSC with parameters $(r,\delta)$ for some $\delta>0$ (Assumption \ref{ass:strictcompNonSmooth}) is a sufficient condition for the optimal solution to be of rank at most $r$. The proof follows immediately from the proof of the analogous Lemma 7 in \cite{garberLowRankSmooth}, by replacing the gradient of the optimal solution with a subgradient for which the first-order optimality condition holds. This lemma can also be viewed as a special case of the alignment property proposed in \cite{relatedWork3} (see example 4.9).
\begin{lemma} \label{lemma:Xopt-subgradEigen}
Let $\X^*$ be an optimal solution to Problem \eqref{nonSmoothProblem} and write its eigen-decomposition as $\X^*=\sum_{i=1}^{r^*}{\lambda_i\v_i\v_i^T}$. Then, any subgradient $\G^*\in\partial g(\X^*)$ which satisfies $\langle \X-\X^*,\G^*\rangle\ge0$ for all $\X\in\mathcal{S}_n$, admits an eigen-decomposition such that the set of vectors $\{\v_i\}_{i=1}^{r^*}$ is a set of leading eigenvectors of $(-\G^*)$ which corresponds to the eigenvalue $\lambda_1(-\G^*)=-\lambda_n(\G^*)$. Furthermore, there exists at least one such subgradient. 
\end{lemma}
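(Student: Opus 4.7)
The plan is to handle the two assertions separately: existence of a subgradient satisfying the first-order optimality condition, and the claimed spectral structure of any such subgradient. The existence part is immediate: Lemma \ref{optCondition} guarantees at least one $\G^*\in\partial g(\X^*)$ with $\langle \X-\X^*,\G^*\rangle\ge 0$ for every $\X\in\Sn$, since $\X^*$ minimizes the convex function $g$ on $\Sn$. So the substantive work is to show that for any such $\G^*$, the eigenvectors of $\X^*$ appear among the bottom eigenvectors of $\G^*$.

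For the spectral part, the idea is to sandwich $\langle \X^*,\G^*\rangle$ between two quantities that both equal $\lambda_n(\G^*)$. For the upper bound, I would evaluate the first-order optimality condition at the rank-one test matrix $\X=\u\u^\top\in\Sn$, where $\u$ is any unit bottom eigenvector of $\G^*$. This yields $\langle \X^*,\G^*\rangle\le \u^\top\G^*\u=\lambda_n(\G^*)$. For the lower bound, expanding via the given eigendecomposition of $\X^*$,
$$
\langle \X^*,\G^*\rangle \;=\; \sum_{i=1}^{r^*}\lambda_i\,\v_i^\top\G^*\v_i \;\ge\; \lambda_n(\G^*)\sum_{i=1}^{r^*}\lambda_i \;=\; \lambda_n(\G^*),
$$
using $\lambda_i>0$, $\sum_i\lambda_i=\trace(\X^*)=1$, and the Rayleigh-quotient bound $\v_i^\top\G^*\v_i\ge\lambda_n(\G^*)$. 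Combining the two bounds forces every inequality to be an equality; in particular, each $\v_i^\top\G^*\v_i=\lambda_n(\G^*)$.

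The final step is the standard observation that a unit vector achieving the minimum Rayleigh quotient of a symmetric matrix must be an eigenvector for the minimum eigenvalue. Hence each $\v_i$ is an eigenvector of $\G^*$ with eigenvalue $\lambda_n(\G^*)$, equivalently an eigenvector of $-\G^*$ with eigenvalue $\lambda_1(-\G^*)=-\lambda_n(\G^*)$. Since the span of $\{\v_i\}_{i=1}^{r^*}$ is invariant under $-\G^*$, its orthogonal complement is also invariant, so one may diagonalize $-\G^*$ on that complement (by the spectral theorem) and assemble a full eigendecomposition in which $\{\v_i\}_{i=1}^{r^*}$ sit as leading eigenvectors, as claimed.

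The main obstacle, such as it is, is really just verifying that the smooth analogue (Lemma~7 of \cite{garberLowRankSmooth}) uses its gradient only through the first-order optimality inequality and the Rayleigh-quotient argument above; once that is observed, the proof transcribes verbatim with $\G^*$ playing the role of $\nabla g(\X^*)$, which is exactly the substitution the statement invites.
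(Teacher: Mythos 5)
Your proof is correct, and it is essentially the argument the paper relies on: the paper gives no standalone proof but defers to Lemma 7 of \cite{garberLowRankSmooth}, whose reasoning is exactly this first-order-optimality/Rayleigh-quotient sandwich with $\G^*$ substituted for the gradient, plus Lemma \ref{optCondition} for existence. Nothing further is needed.
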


\subsection{The challenge of applying low-rank projected subgradient steps}\label{sec:subgradfail}

Projected subgradient descent is the simplest and perhaps most general approach to solve nonsmooth optimization problems given by a first-order oracle. However, we now demonstrate the difficulty of replacing the full-rank SVD computations required in projected subgradient steps over the spectrahedron, with their low-rank SVD counterparts when attempting to solve Problem \eqref{nonSmoothProblem}. This motivates our saddle-point approach which we will present in Section \ref{sec:smooth2Saddle}. We prove that a projected subgradient descent step from a point arbitrarily close to a low-rank optimal solution --- even one that satisfies strict complementarity, may result in a higher rank matrix.  The problem on which we demonstrate this phenomenon is a well known convex formulation of the \textit{sparse PCA} problem \cite{d2007direct}. The proof is given in Appendix \ref{sec:appendix:proofLemma4}.

\begin{lemma}[failure of subgradient descent with low-rank projections on sparse PCA] \label{lemma:negativeExampleNonSmooth}
Consider the problem
\begin{align*}
\min_{\X\in\Sn}\{g(\X):=-\left\langle\z\z^{\top}+\z_{\perp}\z_{\perp}^{\top},\X\right\rangle+\frac{1}{2k}\Vert\X\Vert_1\},
\end{align*}
where $\z=(1/\sqrt{k},\ldots,1/\sqrt{k},0,\ldots,0)^{\top}$ is supported on the first $k$ entries,  $\z_{\perp}=(0,\ldots,0, 1/\sqrt{n-k},\ldots,1/\sqrt{n-k})^{\top}$ is supported on the last $n-k$ entries, and $k\le n/4$. Then, $\z\z^{\top}$ is a rank-one optimal solution for which  strict complementarity holds.
However, for any $\eta<\frac{2}{3}$ and any $\v\in\reals^n$ such that $\Vert\v\Vert=1$, $\support(\v)\subseteq\support(\z)$, and $\langle\z,\v\rangle^2=1 - \frac{1}{2}\Vert{\v\v^{\top}-\z\z^{\top}}\Vert_F^2 \ge1-\frac{1}{2k^2}$, it holds that
\begin{align*}
\rank\left(\Pi_{\Sn}[\v\v^{\top}-\eta\G_{\v\v^{\top}}]\right)>1,
\end{align*}
where $\G_{\v\v^{\top}}=-\z\z^{\top}-\z_{\perp}\z_{\perp}^{\top}+\frac{1}{2k}\sign(\v\v^{\top})\in\partial g(\v\v^{\top})$.
\end{lemma}
Note that the subgradient of the $\ell_1$-norm which we choose for the projected subgradient step simply corresponds to the sign function, which is arguably the most natural choice.

\section{From Nonsmooth to Saddle-Point Problems}\label{sec:smooth2Saddle}

To circumvent the difficulty demonstrated in Lemma \ref{lemma:negativeExampleNonSmooth} in incorporating low-rank SVDs into standard subgradient methods for solving Problem \eqref{nonSmoothProblem}, we propose tackling the nonsmooth problem with saddle-point methods.

We assume the nonsmooth Problem \eqref{nonSmoothProblem} can be written as a maximum of smooth functions, i.e., $g(\X)=\max_{\y\in\mathcal{K}}f(\X,\y)$, where $\mathcal{K}\subset\mathbb{V}$ is some compact and convex subset of the finite linear space over the reals $\mathbb{V}$ onto which it is efficient to compute projections. We assume $f(\cdot,\y)$ is convex for all $\y\in\mathcal{K}$ and $f(\X,\cdot)$ is concave for all $\X\in\Sn$. That is, we rewrite Problem \eqref{nonSmoothProblem} as the following equivalent saddle-point problem:
\begin{align} \label{problem1}
 \min_{\X\in\mS_n}\max_{\y\in\mathcal{K}}{f(\X,\y)}.
\end{align}

Finding an optimal solution to problem \eqref{problem1} is equivalent to finding a saddle-point $(\X^*,\y^*)\in{\Sn\times\mathcal{K}}$ such that for all $\X\in\Sn$ and $\y\in\mathcal{K}$, 
\begin{align*}
f(\X^*,\y) \le f(\X^*,\y^*) \le f(\X,\y^*).
\end{align*}

We make a standard assumption that $f(\cdot,\cdot)$ is smooth with respect to all the components. That is, we assume there exist $\beta_X,\beta_y,\beta_{Xy},\beta_{yX}\ge0$ such that for any $\X,\tilde{\X}\in\Sn$ and $\y,\tilde{\y}\in\mathcal{K}$ the following four inequalities hold:
\begin{align} \label{betas}
& \Vert\nabla_{\X}f(\X,\y)-\nabla_{\X}f(\tilde{\X},\y)\Vert_{\mathcal{X^*}} \le\beta_{X}\Vert\X-\tilde{\X}\Vert_{\mathcal{X}}, \nonumber \\
&\Vert\nabla_{\y}f(\X,\y)-\nabla_{\y}f(\X,\tilde{\y})\Vert_{\mathcal{Y^*}} \le\beta_{y}\Vert\y-\tilde{\y}\Vert_{\mathcal{Y}}, \nonumber \\
& \Vert\nabla_{\X}f(\X,\y)-\nabla_{\X}f(\X,\tilde{\y})\Vert_{\mathcal{X^*}} \le\beta_{Xy}\Vert\y-\tilde{\y}\Vert_{\mathcal{Y}}, \nonumber \\
& \Vert\nabla_{\y}f(\X,\y)-\nabla_{\y}f(\tilde{\X},\y)\Vert_{\mathcal{Y^*}} \le\beta_{yX}\Vert\X-\tilde{\X}\Vert_{\mathcal{X}},
\end{align}
where $\nabla_{\X}f=\frac{\partial f}{\partial \X}$, $\nabla_{\y}f=\frac{\partial f}{\partial \y}$, $\Vert\cdot\Vert_{\mathcal{X}}$ is a norm equipped upon $\mathbb{S}^n$ and $\Vert\cdot\Vert_{\mathcal{X^*}}$ is its dual norm, and $\Vert\cdot\Vert_{\mathcal{Y}}$ is a norm equipped upon $\mathbb{V}$ and $\Vert\cdot\Vert_{\mathcal{Y^*}}$ is its dual norm. For a pair of variables $(\X,\y)\in\mathbb{S}^n\times\mathbb{V}$ we denote by $\Vert{(\X,\y)}\Vert$ the Euclidean norm over the product space $\mathbb{S}^n\times\mathbb{V}$.

The following lemma highlights a connection between the gradient of a saddle-point of \eqref{problem1} and subgradients of an optimal solution to \eqref{nonSmoothProblem} for which the first order optimality condition holds. One of the connections we will be interested in, is that GSC for Problem \eqref{nonSmoothProblem} implies GSC (with the same parameters) for Problem \eqref{problem1}. However, to prove this specific connection we require an additional structural assumption on the objective function $g(\cdot)$. We note that this assumption holds for all applications mentioned in this paper.
\begin{assumption}\label{ass:struct}
$g(\X)$ is of the form $g(\X) = h(\X) + \max_{\y\in\mK}\y^{\top}(\mA(\X)-\b)$, where $h(\cdot)$ is smooth and convex, and $\mA$ is a linear map.
\end{assumption}

\begin{lemma} \label{lemma:connectionSubgradientNonSmoothAndSaddlePoint}
If $(\X^*,\y^*)$ is a saddle-point of Problem \eqref{problem1} then $\X^*$ is an optimal solution to Problem \eqref{nonSmoothProblem},  $\nabla_{\X}f(\X^*,\y^*)\in\partial g(\X^*)$,  and for all $\X\in\Sn$ it holds that $\langle\X-\X^*,\nabla_{\X}f(\X^*,\y^*)\rangle\ge0$.
Conversely, under Assumption \ref{ass:struct}, if $\X^*$ is an optimal solution to Problem \eqref{nonSmoothProblem}, and $\G^*\in\partial{}g(\X^*)$ which satisfies $\langle\X-\X^*,\G^*\rangle\ge0$  for all $\X\in\Sn$, then there exists $\y^*\in\argmax_{\y\in\mathcal{K}}f(\X^*,\y)$ such that $(\X^*,\y^*)$ is a saddle-point of Problem \eqref{problem1}, and $\nabla_{\X}f(\X^*,\y^*)=\G^*$.
\end{lemma}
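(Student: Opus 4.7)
The plan is to handle the two directions separately. For the forward direction, I would start by noting that the saddle-point property on the $\y$-side gives $\y^*\in\argmax_{\y\in\mathcal{K}}f(\X^*,\y)$, so $g(\X^*)=f(\X^*,\y^*)$. Optimality of $\X^*$ for Problem \eqref{nonSmoothProblem} then follows from the chain $g(\X)\geq f(\X,\y^*)\geq f(\X^*,\y^*)=g(\X^*)$ for every $\X\in\Sn$, where the first inequality is the definition of $g$ and the second is the saddle-point property on the $\X$-side. To obtain $\nabla_{\X}f(\X^*,\y^*)\in\partial g(\X^*)$, I would use convexity of $f(\cdot,\y^*)$: combining $g(\X)\geq f(\X,\y^*)\geq f(\X^*,\y^*)+\langle\nabla_{\X}f(\X^*,\y^*),\X-\X^*\rangle=g(\X^*)+\langle\nabla_{\X}f(\X^*,\y^*),\X-\X^*\rangle$ yields the subgradient inequality. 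The first-order optimality inequality $\langle\X-\X^*,\nabla_{\X}f(\X^*,\y^*)\rangle\geq0$ is then immediate from Lemma \ref{optCondition} applied to $\X^*$ with this specific subgradient.

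For the converse, I would invoke Assumption \ref{ass:struct} to write $g=h+\phi$ with $\phi(\X):=\max_{\y\in\mathcal{K}}\y^{\top}(\mA(\X)-\b)$. Since $h$ is smooth, the Moreau--Rockafellar sum rule gives $\partial g(\X^*)=\{\nabla h(\X^*)\}+\partial\phi(\X^*)$. The function $\phi$ is a pointwise maximum of functions that are linear in $\X$ with gradient $\mA^*(\y)$, so the standard subdifferential-of-max formula yields $\partial\phi(\X^*)=\textrm{conv}\{\mA^*(\y):\y\in\mathcal{K}^*(\X^*)\}$, where $\mathcal{K}^*(\X^*):=\argmax_{\y\in\mathcal{K}}\y^{\top}(\mA(\X^*)-\b)=\argmax_{\y\in\mathcal{K}}f(\X^*,\y)$. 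Because $\mathcal{K}^*(\X^*)$ is the argmax of a linear functional over a convex set and hence convex, and $\mA^*$ is linear, the convex hull in the formula is redundant, so $\partial\phi(\X^*)=\{\mA^*(\y):\y\in\mathcal{K}^*(\X^*)\}$. Therefore any $\G^*\in\partial g(\X^*)$ takes the form $\G^*=\nabla h(\X^*)+\mA^*(\y^*)$ for some $\y^*\in\mathcal{K}^*(\X^*)$, and by the formula for $\nabla_{\X}f$ this $\y^*$ satisfies $\nabla_{\X}f(\X^*,\y^*)=\G^*$.

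It remains to verify that $(\X^*,\y^*)$ is a saddle-point. The inequality $f(\X^*,\y)\leq f(\X^*,\y^*)$ for all $\y\in\mathcal{K}$ holds by the choice $\y^*\in\mathcal{K}^*(\X^*)$. For $f(\X^*,\y^*)\leq f(\X,\y^*)$ for every $\X\in\Sn$, I would observe that $f(\cdot,\y^*)$ is convex in $\X$ with gradient $\G^*$ at $\X^*$, and the hypothesis $\langle\X-\X^*,\G^*\rangle\geq0$ for all $\X\in\Sn$ is precisely the first-order optimality condition of Lemma \ref{optCondition} ensuring $\X^*\in\argmin_{\X\in\Sn}f(\X,\y^*)$.

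The main subtlety is the subdifferential calculation in the converse direction, specifically justifying that the convex hull in the formula for $\partial\phi(\X^*)$ is unnecessary; this is the step that relies crucially on the bilinear structure imposed by Assumption \ref{ass:struct} (linearity in both $\X$ through $\mA$ and in $\y$ through the inner product), and it is exactly what allows us to pick a single $\y^*\in\mathcal{K}$ realizing the given subgradient $\G^*$, rather than only a convex combination of such $\y$'s.
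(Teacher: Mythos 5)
Your proposal follows essentially the same route as the paper's proof: the forward direction via the saddle-point inequalities combined with the gradient inequality for $f(\cdot,\y^*)$, and the converse via Assumption \ref{ass:struct}, the sum rule and the subdifferential-of-max (Danskin) formula, using convexity of the argmax set and linearity of $\mA^{\top}$ to discard the convex hull and extract a single $\y^*$ with $\nabla_{\X}f(\X^*,\y^*)=\G^*$; the verification of the saddle-point inequalities is also the same.

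One justification in the forward direction is stated imprecisely and, as written, does not follow: you claim $\langle\X-\X^*,\nabla_{\X}f(\X^*,\y^*)\rangle\ge0$ is ``immediate from Lemma \ref{optCondition} applied to $\X^*$ with this specific subgradient.'' Lemma \ref{optCondition}, applied to $g$ at its minimizer $\X^*$, only asserts the \emph{existence} of some subgradient in $\partial g(\X^*)$ satisfying the variational inequality; it does not assert that the particular subgradient $\nabla_{\X}f(\X^*,\y^*)$ does (and in general not every subgradient at a constrained minimizer satisfies it). The correct and easy justification --- the one the paper uses, and which you yourself use in the converse direction --- is that the $\X$-side of the saddle-point property says $\X^*$ minimizes the smooth convex function $f(\cdot,\y^*)$ over $\Sn$, and the first-order optimality condition for that smooth problem, whose subdifferential at $\X^*$ is the singleton $\{\nabla_{\X}f(\X^*,\y^*)\}$, yields exactly the required inequality. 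With that fix your argument is complete and coincides with the paper's.
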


The proof is given in Appendix \ref{sec:proofSubgradEquiv}. 
The connection between the gradient of an optimal solution to the saddle-point problem and a subgradient of a corresponding optimal solution in the equivalent nonsmooth problem established in Lemma \ref{lemma:connectionSubgradientNonSmoothAndSaddlePoint}, naturally
leads to the formulation of the following generalized strict complementarity assumption for saddle-point problems.

\begin{assumption}[(generalized) strict complementarity for saddle-points]\label{ass:strictcompSaddlePoint}
We say a saddle-point $(\X^*,\y^*)\in\Sn\times\mathcal{K}$ of Problem \eqref{problem1} with $\rank(\X^*)=r^*$ satisfies the strict complementarity assumption with parameters $\delta>0$, if $\lambda_{n-r^*}(\nabla_{\X}f(\X^*,\y^*))-\lambda_{n}(\nabla_{\X}f(\X^*,\y^*))\ge\delta$.
Moreover, we say $(\X^*,\y^*)$ satisfies the generalized strict complementarity assumption with parameters $r\geq r^*, \delta>0$, if $\lambda_{n-r}(\nabla_{\X}f(\X^*,\y^*))-\lambda_{n}(\nabla_{\X}f(\X^*,\y^*))\ge\delta$.
\end{assumption}

\begin{remark}
Note that under Assumption \ref{ass:struct}, due to Lemma \ref{lemma:connectionSubgradientNonSmoothAndSaddlePoint},  GSC with parameters $r,\delta$ for some optimal solution $\X^*$ to Problem \eqref{nonSmoothProblem} implies GSC with parameters $r,\delta$ to a corresponding saddle-point $(\X^*,\y^*)$ of Problem \eqref{problem1}. Nevertheless, Assumption \ref{ass:struct} is not necessary for proving our convergence results for Problem \eqref{problem1}, which are directly stated in terms of Assumption \ref{ass:strictcompSaddlePoint}. %It is only necessary to translate the generalized strict complementarity assumption of Problem \eqref{nonSmoothProblem} (\cref{ass:strictcompNonSmooth}) to generalized strict complementarity of Problem \eqref{problem1} (\cref{ass:strictcompSaddlePoint}).
\end{remark}

\section{Approximated Mirror-Prox  for Saddle-Point Problems} \label{sec:approximatedMP}
In this section we present a general template for deriving approximate mirror-prox methods for saddle-point problems over the spectrahedron, and present its convergence analysis. We first give preliminaries on the mirror-prox method with a particular focus on the Euclidean setup which gives rise to the extragradient method, and on the von Neumann entropy setup which gives rise to mirror-prox with matrix exponentiated gradient updates.%, including illustrative applications. 

\subsection{Bregman distances and mirror-prox methods}

\begin{definition}[Bregman distance]
Let $\omega$ be a real-valued, proper and strongly convex function over a nonempty, closed and convex subset of the parameter domain that is continuously differentiable. 
Then the Bregman distance is defined by: $B_{\omega}(\x,\y)=\omega(\x)-\omega(\y)-\left<\x-\y,\nabla\omega(\y)\right>$.
\end{definition}

Let $\omega_{\X}$ be a strongly convex function over a subset of $\Sn$ with respect to $\Vert\cdot\Vert_{\mathcal{X}}$ and $\omega_{\y}$ be a strongly convex function over a subset of $\mathcal{K}$ with respect to $\Vert\cdot\Vert_{\mathcal{Y}}$. We denote $\breg_{\X}(\X,\widetilde{\X}):=\breg_{\omega_{\X}}(\X,\widetilde{\X})$ to be the bregman distance corresponding to $\omega_{\X}$ and $\breg_{\y}(\y,\widetilde{\y}):=\breg_{\omega_{\y}}(\y,\widetilde{\y})$ to be the bregman distance corresponding to $\omega_{\y}$.

Without loss of generality, throughout this paper we assume that the strong convexity parameters are all equal to 1. That is, for all $\X,\widetilde{\X}\in\Sn\cap\textrm{dom}(\omega_{\X})$ and all $\y,\widetilde{\y}\in\mathcal{K}\cap\textrm{dom}(\omega_{\y})$,
\begin{align} \label{ineq:strongConvexityOfBregmanDistance}
\breg_{\X}(\X,\widetilde{\X})\ge\frac{1}{2}\Vert \X-\widetilde{\X}\Vert_{\mathcal{X}}^2,\quad \breg_{\y}(\y,\widetilde{\y})\ge\frac{1}{2}\Vert \y-\widetilde{\y}\Vert_{\mathcal{Y}}^2.
\end{align}

An important property of bregman distances is the three point identity (see \cite{beckMD}):
\begin{align} \label{lemma:threePointLemma}
\breg_{\omega}(\x,\y)+\breg_{\omega}(\y,\z)-\breg_{\omega}(\x,\z)=\langle \nabla\omega(\z)-\nabla\omega(\y),\x-\y\rangle.
\end{align}

A well-known method that can be used to solve saddle-point optimization problems as in \eqref{problem1} with convergence rate $\mathcal{O}(1/t)$ is the mirror-prox method \cite{Nemirovski}. This method is brought in Algorithm \ref{alg:MP}.
\begin{algorithm}[H]
	\caption{Mirror-prox for saddle-point problems (see also \cite{Nemirovski})}\label{alg:MP}
	\begin{algorithmic}
		\State \textbf{Input:} sequence of step-sizes $\{\eta_t\}_{t\geq 1}$ 
		\State \textbf{Initialization:} $(\X_1,\y_1)\in\Sn\cap\textrm{dom}(\omega_{\X})\times\mathcal{K}\cap\textrm{dom}(\omega_{\y})$
		\For{$t = 1,2,...$} 
            \State $\Z_{t+1}=\argmin_{\X\in\Sn}\lbrace\langle\eta_t\nabla_{\X}f(\X_t,\y_t),\X\rangle+\breg_{\X}(\X,\X_t)\rbrace$   
			\State $\w_{t+1}=\argmin_{\y\in\mathcal{K}}\lbrace\langle-\eta_t\nabla_{\y}f(\X_t,\y_t),\y\rangle+\breg_{\y}(\y,\y_t)\rbrace$
			\State $\X_{t+1}=\argmin_{\X\in\Sn}\lbrace\langle\eta_t\nabla_{\X}f(\Z_{t+1},\w_{t+1}),\X\rangle+\breg_{\X}(\X,\X_t)\rbrace$   
			\State $\y_{t+1}=\argmin_{\y\in\mathcal{K}}\lbrace\langle-\eta_t\nabla_{\y}f(\Z_{t+1},\w_{t+1}),\y\rangle+\breg_{\y}(\y,\y_t)\rbrace$
        \EndFor
	\end{algorithmic}
\end{algorithm}

\subsection{Bregman distances for the spectrahedron}
\label{sec:typesOfBregDistances}

There are two popular bregman distances that can be used in optimization problems over the spectrahedron. 
\paragraph{Euclidean distance:} Define
\begin{align*}
\omega_{\X}(\X) = \frac{1}{2}\Vert\X\Vert_F^2,
\end{align*}
which is a 1-strongly convex function with respect to the Frobenius norm. By equipping $\mathbb{S}^n$ with the Frobenius norm, the primal Lipschitz parameters of the gradient in \eqref{betas}, $\beta_X,\beta_{Xy}$, are also measured w.r.t the Frobenius norm as it is a self dual norm, i.e., the inequalities in \eqref{betas} can be written as
\begin{align*} %\label{betasEuc}
& \Vert\nabla_{\X}f(\X,\y)-\nabla_{\X}f(\tilde{\X},\y)\Vert_{F} \le\beta_{X}\Vert\X-\tilde{\X}\Vert_{F}, \nonumber \\
&\Vert\nabla_{\y}f(\X,\y)-\nabla_{\y}f(\X,\tilde{\y})\Vert_{2} \le\beta_{y}\Vert\y-\tilde{\y}\Vert_{2}, \nonumber \\
& \Vert\nabla_{\X}f(\X,\y)-\nabla_{\X}f(\X,\tilde{\y})\Vert_{F} \le\beta_{Xy}\Vert\y-\tilde{\y}\Vert_{2}, \nonumber \\
& \Vert\nabla_{\y}f(\X,\y)-\nabla_{\y}f(\tilde{\X},\y)\Vert_{2} \le\beta_{yX}\Vert\X-\tilde{\X}\Vert_{F}.
\end{align*}
The Euclidean distance which corresponds to $\omega_{\X}(\X)$ is defined as
\begin{align*}
\breg_{\X}(\X,\widetilde{\X})=\frac{1}{2}\Vert\X-\widetilde{\X}\Vert_F^2.
\end{align*}
Using a Euclidean distance, the primal update step of the mirror-prox method in Algorithm \ref{alg:MP} for some $\widetilde{\X}\in\Sn$ and $\nabla_{\X}\in\mathbb{S}^n$, can be written as:
\begin{align*}
\X_+=\argmin_{\X\in\Sn}\lbrace\langle\eta\nabla_{\X},\X\rangle+\frac{1}{2}\Vert\X-\widetilde{\X}\Vert_F^2\rbrace = \Pi_{\mathcal{S}_n}[\widetilde{\X}-\eta\nabla_{\X}].
\end{align*}

\paragraph{Bregman distance corresponding to the von Neumann entropy:}
The von Neumann entropy is a 1-strongly convex function with respect to the nuclear norm $\Vert\cdot\Vert_*$ over the spectrahedron and is defined as:
\begin{align*}
\omega_{\X}(\X) = \trace(\X\log(\X)-\X),
\end{align*}
where the convention $0\log(0):=0$ is used to include also positive semidefinite matrices.
By equipping $\mathbb{S}^n$ with the nuclear norm, the primal Lipschitz parameters of the gradient in \eqref{betas}, $\beta_X,\beta_{Xy}$, are measured w.r.t the spectral norm, which is the dual norm of the nuclear norm. 
Therefore, the inequalities in \eqref{betas} can be written as
\begin{align*} %\label{betasEuc}
& \Vert\nabla_{\X}f(\X,\y)-\nabla_{\X}f(\tilde{\X},\y)\Vert_{2} \le\beta_{X}\Vert\X-\tilde{\X}\Vert_{*}, \nonumber \\
&\Vert\nabla_{\y}f(\X,\y)-\nabla_{\y}f(\X,\tilde{\y})\Vert_{2} \le\beta_{y}\Vert\y-\tilde{\y}\Vert_{2}, \nonumber \\
& \Vert\nabla_{\X}f(\X,\y)-\nabla_{\X}f(\X,\tilde{\y})\Vert_{2} \le\beta_{Xy}\Vert\y-\tilde{\y}\Vert_{2}, \nonumber \\
& \Vert\nabla_{\y}f(\X,\y)-\nabla_{\y}f(\tilde{\X},\y)\Vert_{2} \le\beta_{yX}\Vert\X-\tilde{\X}\Vert_{*}.
\end{align*}
The corresponding bregman distance over $\Sn$ is defined as
\begin{align} \label{eq:bregmanVNE}
\breg_{\X}(\X,\widetilde{\X})=\trace(\X\log(\X)-\X\log(\widetilde{\X})).
\end{align}
Using this distance, 
the primal update step of the mirror-prox method in Algorithm \ref{alg:MP} for some $\widetilde{\X}\in\Sn$ and $\nabla_{\X}\in\mathbb{S}^n$, can be written as (see Section 3 in \cite{BregmanMatrices}):
\begin{align*}
\X_+=\argmin_{\X\in\Sn}\lbrace\langle\eta\nabla_{\X},\X\rangle+\breg_{\X}(\X,\widetilde{\X})\rbrace = \frac{1}{\tau}\exp(\log(\widetilde{\X})-\eta\nabla_{\X}),
\end{align*}
where $\tau:=\trace(\exp(\log(\widetilde{\X})-\eta\nabla_{\X}))$.

\subsection{Approximated mirror-prox method}
The updates of the primal variables $\X_{t+1}$ and $\Z_{t+1}$ in the mirror-prox method using either Euclidean updates as in \eqref{eq:euclidProj}, or MEG updates as in \eqref{exponentiatedGradientUpdate}, require in worst-case a full-rank SVD computation. To avoid these expensive steps, we consider replacing these exact updates in Algorithm \ref{alg:MP}, with certain approximations. In Algorithm \ref{alg:ApproxMP} we introduce a template for such approximated mirror-prox methods and in the sequel we will derive concrete methods based on this template.
We denote by $\widehat{\X}_{t+1}$ and $\widehat{\Z}_{t+1}$ the exact primal mirror-prox updates on iteration $t$, and we denote by $\X_{t+1}$ and $\Z_{t+1}$  the corresponding approximations of $\widehat{\X}_{t+1}, \widehat{\Z}_{t+1}$. The dual variables $\y_{t+1}$ and $\w_{t+1}$ are computed exactly, as we assume that their computation w.r.t. the set $\mK$ is computationally efficient, and so approximations are only considered with respect to the primal variables.

\begin{algorithm}[H]
	\caption{Template of approximated mirror-prox  for saddle-point problems}\label{alg:ApproxMP}
	\begin{algorithmic}
		\State \textbf{Input:} sequence of step-sizes $\{\eta_t\}_{t\geq 1}$ 
		\State \textbf{Initialization:} $(\X_1,\y_1)\in\Sn\cap\textrm{dom}(\omega_{\X})\times\mathcal{K}\cap\textrm{dom}(\omega_{\y})$
		\For{$t = 1,2,...$} 
            \State $\widehat{\Z}_{t+1}=\argmin_{\X\in\Sn}\lbrace\langle\eta_t\nabla_{\X}f(\X_t,\y_t),\X\rangle+\breg_{\X}(\X,\X_t)\rbrace$   
            	\State $\Z_{t+1}\in\Sn\cap\textrm{dom}(\omega_{\X})$ \textrm{\{approximation of $\widehat{\Z}_{t+1}$\}}
			\State $\w_{t+1}=\argmin_{\y\in\mathcal{K}}\lbrace\langle-\eta_t\nabla_{\y}f(\X_t,\y_t),\y\rangle+\breg_{\y}(\y,\y_t)\rbrace$
			\State $\widehat{\X}_{t+1}=\argmin_{\X\in\Sn}\lbrace\langle\eta_t\nabla_{\X}f(\Z_{t+1},\w_{t+1}),\X\rangle+\breg_{\X}(\X,\X_t)\rbrace$
			\State $\X_{t+1}\in\Sn\cap\textrm{dom}(\omega_{\X})$  \textrm{\{approximation of $\widehat{\X}_{t+1}$\}}
			\State $\y_{t+1}=\argmin_{\y\in\mathcal{K}}\lbrace\langle-\eta_t\nabla_{\y}f(\Z_{t+1},\w_{t+1}),\y\rangle+\breg_{\y}(\y,\y_t)\rbrace$
        \EndFor
	\end{algorithmic}
\end{algorithm}

In the following lemma we give a generic convergence result for Algorithm \ref{alg:ApproxMP} with any approximated sequences $\lbrace\X_t\rbrace_{t\ge1}$ and $\lbrace\Z_{t}\rbrace_{t\ge2}$. The proof of the lemma follows the general layout used for proving the convergence of the standard mirror-prox method, with the addition of error terms that are introduced due to the approximated sequences. The full proof is given in Appendix \ref{sec:appendix:proofLemma6}.

\begin{lemma} \label{lemma:convergenceApproxMethod}
Let $\lbrace(\X_t,\y_t)\rbrace_{t\ge1}$, $\lbrace(\Z_{t},\w_t)\rbrace_{t\ge2}$, $\lbrace\widehat{\X}_t\rbrace_{t\ge2}$, and $\lbrace\widehat{\Z}_{t}\rbrace_{t\ge2}$ be the sequences generated by Algorithm \ref{alg:ApproxMP} with a fixed step-size $\eta_t=\eta\le\min\left\lbrace\frac{1}{\beta_{X}+\beta_{Xy}},\frac{1}{(1+\theta)(\beta_{X}+\beta_{yX})},\frac{1}{\beta_{y}+\beta_{yX}},\frac{1}{\beta_{y}+\beta_{Xy}}\right\rbrace$, such that $\theta=\bigg\lbrace\begin{array}{ll} 0 & if\ \Z_{t+1}=\widehat{\Z}_{t+1}~\forall t
\\ 1 & otherwise \end{array}$. Then,
%\textcolor{red}{
\begin{align} \label{ineq:convergenceRate}
& \max_{\y\in\mathcal{K}} f\left(\frac{1}{T}\sum_{t=1}^T \Z_{t+1},\y\right) -  \min_{\X\in\Sn} f\left(\X,\frac{1}{T}\sum_{t=1}^T\w_{t+1}\right) \nonumber
\\ & \le \frac{D^2}{\eta T}
+ \frac{1}{\eta T}\sum_{t=1}^T\max_{\X\in\Sn}\left(\breg_{\X}(\X,\X_{t+1})-\breg_{\X}(\X,\widehat{\X}_{t+1})\right)
 \nonumber
\\ & \ \ \ + \frac{2(\beta_{X}+\beta_{yX})}{T}\sum_{t=1}^T\breg_{\X}(\widehat{\Z}_{t+1},\Z_{t+1})+\frac{\sqrt{2}G}{T}\sum_{t=1}^T\sqrt{\breg_{\X}(\widehat{\Z}_{t+1},\Z_{t+1})},
\end{align}
where $D^2:=\sup_{(\X,\y)\in{\Sn\times\mathcal{K}}}\left(\breg_{\X}(\X,\X_1)+\breg_{\y}(\y,\y_1)\right)$ and  $G=\sup_{(\X,\y)\in{\Sn\times\mathcal{K}}}\Vert\nabla_{\X}f(\X,\y)\Vert_{\mathcal{X^*}}$.
%}
\end{lemma}
As can be seen, if the error terms  $\sum_{t=1}^T\sqrt{\breg_{\X}(\widehat{\Z}_{t+1},\Z_{t+1})}$, $\sum_{t=1}^T\breg_{\X}(\widehat{\Z}_{t+1},\Z_{t+1})$, and $\sum_{t=1}^T\max_{\X\in\Sn}\left(\breg_{\X}(\X,\X_{t+1})-\breg_{\X}(\X,\widehat{\X}_{t+1})\right)$ resulting from the approximated sequences are all bounded by some constant, then the approximated mirror-prox method, Algorithm \ref{alg:ApproxMP}, converges with rate $\mathcal{O}(1/t)$, similarly to the exact mirror-prox method.

\section{Projected Extragradient Method with Low-Rank Projections}
\label{sec:EucledianCase}
In this section we formally state and prove our first main result: the projected extragradient method \cite{extragradientK} for the saddle-point Problem \eqref{problem1}, which is the Euclidean instantiation of mirror-prox, when initialized in the proximity of a saddle-point which satisfies GSC (Assumption \ref{ass:strictcompSaddlePoint}), converges with its original ergodic $O(1/t)$ rate,  while requiring only two low-rank SVD computations per iteration. The algorithm is given as Algorithm \ref{alg:EG}.

\begin{algorithm}[H]
	\caption{Projected extragradient method for saddle-point problems (see also \cite{extragradientK,Nemirovski})}\label{alg:EG}
	\begin{algorithmic}
		\State \textbf{Input:} sequence of step-sizes $\{\eta_t\}_{t\geq 1}$ 
		\State \textbf{Initialization:} $(\X_1,\y_1)\in\Sn\times\mathcal{K}$
		\For{$t = 1,2,...$} 
            \State $\Z_{t+1}=\Pi_{\Sn}[\X_t-\eta_t\nabla_{\X}f(\X_t,\y_t)]$   
			\State $\w_{t+1}=\Pi_{\mathcal{K}}[\y_t+\eta_t\nabla_{\y}f(\X_t,\y_t)]$
			\State $\X_{t+1}=\Pi_{\Sn}[\X_t-\eta_t\nabla_{\X}f(\Z_{t+1},\w_{t+1})]$   
			\State $\y_{t+1}=\Pi_{\mathcal{K}}[\y_t+\eta_t\nabla_{\y}f(\Z_{t+1},\w_{t+1})]$
        \EndFor
	\end{algorithmic}
\end{algorithm}

%We first state our main theorem.

\begin{theorem}[main theorem] \label{theroem:allPutTogether}
Fix an optimal solution $(\X^*,\y^*)\in{\Sn\times\mathcal{K}}$ to Problem \eqref{problem1}. Let $\tilde{r}$ denote the multiplicity of $\lambda_{n}(\nabla_{\X}f(\X^*,\y^*))$ and for any $r\ge \tilde{r}$ define $\delta(r)=\lambda_{n-r}(\nabla_{\X}f(\X^*,\y^*)-\lambda_{n}(\nabla_{\X}f(\X^*,\y^*)$. Let $\lbrace(\X_t,\y_t)\rbrace_{t\ge1}$ and $\lbrace(\Z_{t},\w_t)\rbrace_{t\ge2}$ be the sequences of iterates generated by Algorithm \ref{alg:EG} with a fixed step-size \[ \eta=\min\Bigg\lbrace\frac{1}{2\sqrt{\beta_X^2+\beta_{yX}^2}},\frac{1}{2\sqrt{\beta_y^2+\beta_{Xy}^2}},\frac{1}{\beta_{X}+\beta_{Xy}},\frac{1}{\beta_y+\beta_{yX}}\Bigg\rbrace, \] 
where $\beta_X,\beta_y,\beta_{Xy},\beta_{yX}\ge0$ are as defined in \eqref{betas}.
Assume the initialization $(\X_1,\y_1)$ satisfies $\Vert(\X_1,\y_1)-(\X^*,\y^*)\Vert\le R_0(r)$, where
\begin{align*}
R_0(r):= \frac{\eta}{(1\hspace{-0.5mm}+\hspace{-0.5mm}\sqrt{2})\left(1\hspace{-0.5mm}+\hspace{-0.5mm}(2\hspace{-0.5mm}+\hspace{-0.5mm}\sqrt{2})\eta\max\lbrace \beta_{X},\beta_{Xy}\rbrace\right)}\max\Bigg\{\frac{\sqrt{\tilde{r}}\delta(r-\tilde{r}+1)}{2},\frac{\delta(r)}{(1+1/\sqrt{\tilde{r}})}\Bigg\}.
\end{align*}
Then, for all $t\ge1$, the projections $\Pi_{\Sn}[\X_t-\eta\nabla_{\X}f(\X_t,\y_t)]$ and $\Pi_{\Sn}[\X_t-\eta\nabla_{\X}f(\Z_{t+1},\w_{t+1})]$ can be replaced with their rank-r truncated counterparts (see \eqref{truncatedProjection}) without changing the sequences $\lbrace(\X_t,\y_t)\rbrace_{t\ge1}$ and $\lbrace(\Z_{t},\w_t)\rbrace_{t\ge2}$, and for any $T\ge0$ it holds that
%\textcolor{red}{
\begin{align*}
& \max_{\y\in\mathcal{K}} f\left(\frac{1}{T}\sum_{t=1}^T \Z_{t+1},\y\right) -  \min_{\X\in\Sn} f\left(\X,\frac{1}{T}\sum_{t=1}^T\w_{t+1}\right)
\\ & \le \frac{D^2\max{\left\lbrace\sqrt{\beta_X^2+\beta_{yX}^2},\sqrt{\beta_y^2+\beta_{Xy}^2},\frac{1}{2}(\beta_{X}+\beta_{Xy}),\frac{1}{2}(\beta_y+\beta_{yX})\right\rbrace}}{T},
\end{align*} 
%}
where $D:=\sup_{(\X,\y),(\Z,\w)\in{\Sn\times\mathcal{K}}}\Vert(\X,\y)-(\Z,\w)\Vert$.
\end{theorem}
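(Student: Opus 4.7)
The plan is to show by induction on $t$ that the two Euclidean projections computed on iteration $t$ of Algorithm \ref{alg:EG} produce matrices of rank at most $r$ and can be computed from only the top-$r$ eigen-components of their arguments. Once this invariant is established, replacing the exact projections by the rank-$r$ truncations \eqref{truncatedProjection} leaves the sequences $\{(\X_t,\y_t)\}$ and $\{(\Z_t,\w_t)\}$ unchanged, so Lemma \ref{lemma:convergenceApproxMethod} applied with $\widehat{\X}_{t+1}=\X_{t+1}$, $\widehat{\Z}_{t+1}=\Z_{t+1}$, $\theta=0$, and all Bregman approximation-error terms equal to zero yields the claimed $O(1/T)$ primal-dual bound. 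The $D^2$ factor arises since the Euclidean $\omega_{\X}(\X)=\frac12\Vert\X\Vert_F^2$ satisfies $\sup \breg_{\X}+\sup\breg_{\y}\le\tfrac12 D^2$, and it is straightforward to verify that the theorem's $\eta$ meets both the step-size condition of Lemma \ref{lemma:convergenceApproxMethod} (with $\theta=0$) and the $\eta^2\cdot 2\max\{\beta_X^2+\beta_{yX}^2,\beta_y^2+\beta_{Xy}^2\}\le 1$ condition of Lemma \ref{lemma:decreasingRadius}.

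The inductive step is the crux. Assume $(\X_t,\y_t)$ lies in the $R_0(r)$-ball around $(\X^*,\y^*)$. By Lemma \ref{lemma:connectionSubgradientNonSmoothAndSaddlePoint}, $\G^*:=\nabla_{\X}f(\X^*,\y^*)\in\partial g(\X^*)$ satisfies the first-order optimality condition; Lemma \ref{lemma:Xopt-subgradEigen} together with Assumption \ref{ass:strictcompSaddlePoint} then implies that the reference matrix $\M^*:=\X^*-\eta\G^*$ has eigen-gap at position $n-r$ of size at least $\eta\,\delta(r)$, that the bottom eigenvalue has multiplicity exactly $\tilde r$, and that $\Pi_{\Sn}[\M^*]=\X^*$ is obtained solely from its top $r^*\le r$ eigenvectors. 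Using the smoothness assumptions \eqref{betas} to control $\Vert\nabla_{\X}f(\X_t,\y_t)-\G^*\Vert_F$ in terms of $\Vert(\X_t,\y_t)-(\X^*,\y^*)\Vert$, and the non-expansiveness of the extragradient predictor step (together with the step-size bound of Lemma \ref{lemma:decreasingRadius}) to control $\Vert(\Z_{t+1},\w_{t+1})-(\X^*,\y^*)\Vert$ analogously, I would bound both $\Vert\M_1-\M^*\Vert_F$ and $\Vert\M_2-\M^*\Vert_F$ by a constant multiple of $R_0(r)$, where $\M_1:=\X_t-\eta\nabla_{\X}f(\X_t,\y_t)$ and $\M_2:=\X_t-\eta\nabla_{\X}f(\Z_{t+1},\w_{t+1})$; the proportionality constant $(1+\sqrt{2})(1+(2+\sqrt{2})\eta\max\{\beta_X,\beta_{Xy}\})$ appearing in $R_0(r)$ is tuned precisely to this double substitution. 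A Weyl-type eigenvalue perturbation argument then transfers the gap of $\M^*$ to each $\M_i$, and invoking the closed form \eqref{eq:euclidProj} shows that the projection of $\M_i$ truncates all $n-r$ smallest eigenvalues of $\M_i$ to zero while depending only on its top-$r$ eigenvectors. The two expressions inside the $\max$ defining $R_0(r)$ correspond to two alternative perturbation arguments: one exploits the $\tilde r$-fold degeneracy of $\lambda_n(\G^*)$ through a $\sqrt{\tilde r}$-type Davis--Kahan/Wedin bound (yielding the $\sqrt{\tilde r}\,\delta(r-\tilde r+1)/2$ term), the other is a direct gap-at-position-$(n-r)$ estimate (yielding $\delta(r)/(1+1/\sqrt{\tilde r})$), and together they implement the rank-versus-radius tradeoff highlighted in the contributions.

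To close the induction I would appeal to Lemma \ref{lemma:decreasingRadius}: once the projections on step $t$ are exact, its approximation-error terms vanish and its first inequality reduces to $\breg_{\X}(\X^*,\X_{t+1})+\breg_{\y}(\y^*,\y_{t+1})\le\breg_{\X}(\X^*,\X_t)+\breg_{\y}(\y^*,\y_t)$; in the Euclidean setup this is exactly $\tfrac12\Vert(\X_{t+1},\y_{t+1})-(\X^*,\y^*)\Vert^2\le\tfrac12\Vert(\X_t,\y_t)-(\X^*,\y^*)\Vert^2\le\tfrac12R_0(r)^2$, so $(\X_{t+1},\y_{t+1})$ remains in the ball and the induction advances; the second inequality of the same lemma (with $\gamma_t>0$ by the choice of $\eta$) delivers the analogous bound on $(\Z_{t+1},\w_{t+1})$ that was consumed in the perturbation step.

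The main obstacle is the perturbation calculation outlined in the middle paragraph: correctly threading Weyl's inequality (and the more refined multiplicity-based estimate) through all three perturbations $\X_t\to\X^*$, $\nabla_{\X}f(\X_t,\y_t)\to\G^*$, and $\nabla_{\X}f(\Z_{t+1},\w_{t+1})\to\G^*$ with the precise constants that reproduce $R_0(r)$, and in particular identifying the two distinct perturbation regimes whose admissible radii get merged into $R_0(r)$ via the maximum, since it is this step that encodes the interplay between the truncation rank $r$, the multiplicity $\tilde r$, and the initialization ball that underlies the whole result.
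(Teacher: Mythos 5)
Your plan matches the paper's own proof: the paper likewise argues by induction (via Lemma \ref{lemma:convergenceOfXYZWsequences}, itself a consequence of Lemma \ref{lemma:inequalityForDecreasingRadius} with exact updates and the chosen $\eta$) that $(\X_t,\y_t)$ remains within $R_0(r)$ and $(\Z_t,\w_t)$ within $(1+\sqrt{2})R_0(r)$, then runs exactly your perturbation argument in Lemma \ref{lemma:radiusOfLowRankProjections} around $\P^*=\X^*-\eta\nabla_{\X}f(\X^*,\y^*)$ (Ky Fan plus Weyl, with your two regimes producing the max in $R_0(r)$ and the smoothness bounds on $\Vert\P-\P^*\Vert_F$), and finally invokes Lemma \ref{lemma:convergenceApproxMethod} with $\theta=0$ and vanishing error terms to get the stated rate. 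The only cosmetic deviations are that the paper controls $(\Z_{t+1},\w_{t+1})$ through Lemma \ref{lemma:convergenceOfXYZWsequences} rather than the second inequality of Lemma \ref{lemma:decreasingRadius} (either works, with slightly different constants), and the $\sqrt{\tilde{r}}\,\delta(r-\tilde{r}+1)$ regime comes from a generalized Weyl inequality bounding $\lambda_{\tilde{r}}(\P-\P^*)\le\Vert\P-\P^*\Vert_F/\sqrt{\tilde{r}}$ rather than a Davis--Kahan-type bound.
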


\begin{remark}
Note that Theorem \ref{theroem:allPutTogether} implies that if standard strict complementarity holds for Problem \eqref{problem1}, that is Assumption \ref{ass:strictcompSaddlePoint} holds with $r=r^*=\rank(\X^*)$ and some $\delta>0$, then only rank-$r^*$ SVDs are required  so that Algorithm \ref{alg:EG} converges with the guaranteed  convergence rate of $O(1/t)$, when initialized with a ``warm-start''. Furthermore, by using SVDs of rank $r>r^*$, with moderately higher values of $r$, we can increase the radius of the ball in which Algorithm \ref{alg:EG} needs to be initialized quite significantly.
\end{remark}

Translating the convergence result we obtained for saddle-points back to the original nonsmooth problem, Problem \eqref{nonSmoothProblem}, we obtain the following convergence result. 

\begin{corollary} \label{cor:nonsmoothExtragradient} 
Fix an optimal solution $\X^*\in\Sn$ to Problem \eqref{nonSmoothProblem} and assume Assumption \ref{ass:struct} holds. Let $\G^*\in\partial g(\X^*)$ which satisfies that $\langle\X-\X^*,\G^*\rangle\ge0$  for all $\X\in\Sn$. Let $\tilde{r}$ denote the multiplicity of $\lambda_{n}(\G^*)$ and for any $r\ge \tilde{r}$ define $\delta(r):=\lambda_{n-r}(\G^*)-\lambda_{n}(\G^*)$.
%Let $\y^*=\argmax_{\y\in\mathcal{K}}f(\X^*,\y)$ and 
Define $f$ as in Problem \eqref{problem1} and let $\lbrace(\X_t,\y_t)\rbrace_{t\ge1}$ and $\lbrace(\Z_{t},\w_t)\rbrace_{t\ge2}$ be the sequences of iterates generated by Algorithm \ref{alg:EG} where $\eta$ and $R_0(r)$ are as defined in Theorem \ref{theroem:allPutTogether}.
Then, for all $t\ge1$ the projections $\Pi_{\Sn}[\X_t-\eta\nabla_{\X}f(\X_t,\y_t)]$ and $\Pi_{\Sn}[\X_t-\eta\nabla_{\X}f(\Z_{t+1},\w_{t+1})]$ can be replaced with rank-r truncated projections \eqref{truncatedProjection} without changing the sequences $\lbrace(\X_t,\y_t)\rbrace_{t\ge1}$ and $\lbrace(\Z_{t},\w_t)\rbrace_{t\ge2}$, and for any $T\ge0$ it holds that
%\textcolor{red}{
\begin{align*}
& g\left(\frac{1}{T}\sum_{t=1}^T \Z_{t+1}\right)-g(\X^*) 
\\ & \le \frac{D^2\max{\left\lbrace\sqrt{\beta_X^2+\beta_{yX}^2},\sqrt{\beta_y^2+\beta_{Xy}^2},\frac{1}{2}(\beta_{X}+\beta_{Xy}),\frac{1}{2}(\beta_y+\beta_{yX})\right\rbrace}}{T},
\end{align*}% }
where $D:=\sup_{(\X,\y),(\Z,\w)\in{\Sn\times\mathcal{K}}}\Vert(\X,\y)-(\Z,\w)\Vert$.
\end{corollary}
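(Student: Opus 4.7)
The plan is to reduce the nonsmooth problem to the saddle-point setting by invoking the correspondence in Lemma \ref{lemma:connectionSubgradientNonSmoothAndSaddlePoint}, apply the main convergence guarantee of Theorem \ref{theroem:allPutTogether} to this saddle-point reformulation, and then translate the primal-dual gap bound back to a bound on $g(\Z_{t+1})-g(\X^*)$.

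First I would use Assumption \ref{ass:struct} together with Lemma \ref{lemma:connectionSubgradientNonSmoothAndSaddlePoint}: given $\X^*$ optimal for Problem \eqref{nonSmoothProblem} and a subgradient $\G^*\in\partial g(\X^*)$ satisfying the first-order optimality condition, the lemma produces $\y^*\in\argmax_{\y\in\mK}f(\X^*,\y)$ such that $(\X^*,\y^*)$ is a saddle-point of Problem \eqref{problem1} and $\nabla_{\X}f(\X^*,\y^*)=\G^*$. This identification is essential: it transports the eigen-structure of $\G^*$ (the multiplicity $\tilde{r}$ of its smallest eigenvalue and the eigen-gap $\delta(r)=\lambda_{n-r}(\G^*)-\lambda_{n}(\G^*)$) verbatim to the gradient $\nabla_{\X}f(\X^*,\y^*)$ used in the hypotheses of Theorem \ref{theroem:allPutTogether}.

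With the saddle-point $(\X^*,\y^*)$ in hand, the assumption $\Vert(\X_1,\y_1)-(\X^*,\y^*)\Vert\le R_0(r)$ and the choice of step-size $\eta$ are precisely those required by Theorem \ref{theroem:allPutTogether}. Applying the theorem yields, for every $T\ge1$, that the low-rank truncated projections do not alter the iterates and
\begin{align*}
\frac{1}{T}\sum_{t=1}^T \max_{\y\in\mathcal{K}} f(\Z_{t+1},\y) - \frac{1}{T}\sum_{t=1}^T \min_{\X\in\Sn} f(\X,\w_{t+1})
\le \frac{D^2 M}{T},
\end{align*}
where $M=\max\{\sqrt{\beta_X^2+\beta_{yX}^2},\sqrt{\beta_y^2+\beta_{Xy}^2},\tfrac{1}{2}(\beta_X+\beta_{Xy}),\tfrac{1}{2}(\beta_y+\beta_{yX})\}$.

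The remaining step is the primal-dual to nonsmooth translation. By definition of $g$, for every $t$ we have $\max_{\y\in\mK}f(\Z_{t+1},\y)=g(\Z_{t+1})$, and for every $\w_{t+1}\in\mK$, $\min_{\X\in\Sn}f(\X,\w_{t+1})\le f(\X^*,\w_{t+1})\le \max_{\y\in\mK}f(\X^*,\y)=g(\X^*)$. Therefore $g(\Z_{t+1})-g(\X^*)\le \max_{\y\in\mK}f(\Z_{t+1},\y)-\min_{\X\in\Sn}f(\X,\w_{t+1})$ for each $t$, and averaging (together with $\min_{t\in[T]}$ being bounded by the average) yields the claimed bound on $\min_{t\in[T]}g(\Z_{t+1})-g(\X^*)$. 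The main conceptual point, and the only step that is not purely mechanical, is ensuring that the saddle-point produced by Lemma \ref{lemma:connectionSubgradientNonSmoothAndSaddlePoint} has a gradient $\nabla_{\X}f(\X^*,\y^*)$ whose eigen-gap matches $\delta(r)$ exactly, so that the radius $R_0(r)$ and the GSC hypothesis of Theorem \ref{theroem:allPutTogether} apply with the parameters stated in the corollary; everything else follows from chaining the theorem's bound with $g(\Z_{t+1})=\max_{\y\in\mK}f(\Z_{t+1},\y)$.
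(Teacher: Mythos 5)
Your proposal is correct and follows essentially the same route as the paper: invoke Lemma \ref{lemma:connectionSubgradientNonSmoothAndSaddlePoint} under Assumption \ref{ass:struct} to obtain a saddle-point $(\X^*,\y^*)$ with $\nabla_{\X}f(\X^*,\y^*)=\G^*$, apply Theorem \ref{theroem:allPutTogether}, and then use $g(\Z_{t+1})=\max_{\y\in\mK}f(\Z_{t+1},\y)$ together with $\min_{\X\in\Sn}f(\X,\w_{t+1})\le f(\X^*,\w_{t+1})\le g(\X^*)$ and the fact that the minimum over $t$ is bounded by the average. This matches the paper's proof step for step.
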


\begin{proof}
Since Assumption \ref{ass:struct} holds, invoking Lemma \ref{lemma:connectionSubgradientNonSmoothAndSaddlePoint} we obtain that there exists a point $\y^*\in\argmax_{\y\in\mathcal{K}}f(\X^*,\y)$ such that $(\X^*,\y^*)$ is a saddle-point of Problem \eqref{problem1}, and $\nabla_{\X}f(\X^*,\y^*)=\G^*$.
Therefore, the assumptions of Theorem \ref{theroem:allPutTogether} hold and so by Theorem \ref{theroem:allPutTogether} we get that
%\textcolor{red}{
\begin{align} \label{ineq:boundFromTheorem1}
& \max_{\y\in\mathcal{K}} f\left(\frac{1}{T}\sum_{t=1}^T \Z_{t+1},\y\right) -  \min_{\X\in\Sn} f\left(\X,\frac{1}{T}\sum_{t=1}^T\w_{t+1}\right) \nonumber
\\ & \le \frac{D^2\max{\left\lbrace\sqrt{\beta_X^2+\beta_{yX}^2},\sqrt{\beta_y^2+\beta_{Xy}^2},\frac{1}{2}(\beta_{X}+\beta_{Xy}),\frac{1}{2}(\beta_y+\beta_{yX})\right\rbrace}}{T}.
\end{align} 
From the definition of $g$ it holds that
\begin{align} \label{ineq:replaceMaxTerm}
g\left(\frac{1}{T}\sum_{t=1}^T \Z_{t+1}\right)=\max_{\y\in\mathcal{K}}f\left(\frac{1}{T}\sum_{t=1}^T \Z_{t+1},\y\right)
\end{align}
and
\begin{align} \label{ineq:replaceMinTerm}
\min_{\X\in\Sn}f\left(\X,\frac{1}{T}\sum_{t=1}^T\w_{t+1}\right) \le f\left(\X^*,\frac{1}{T}\sum_{t=1}^T\w_{t+1}\right) \le \max_{\y\in\mathcal{K}}f\left(\X^*,\y\right)=g(\X^*).
\end{align}
Plugging \eqref{ineq:replaceMaxTerm} and \eqref{ineq:replaceMinTerm} into the LHS of \eqref{ineq:boundFromTheorem1} we obtain the required result. 
%}
\end{proof}

To prove Theorem \ref{theroem:allPutTogether} we first introduce two technical lemmas. We begin by establishing that the iterates of Algorithm \ref{alg:EG} always remain inside a ball of a certain radius around an optimal solution. The following lemma is stated in terms of the full Lipschitz parameter $\beta\ge0$, which satisfies that for all $\X,\widetilde{\X}\in\Sn$ and $\y,\widetilde{\y}\in\mK$ it holds that $\Vert(\nabla_{\X}f(\X,\y),-\nabla_{\y}f(\X,\y))-(\nabla_{\X}f(\tilde{\X},\tilde{\y}),-\nabla_{\y}f(\tilde{\X},\tilde{\y}))\Vert \le\beta\Vert(\X,\Y)-(\tilde{\X},\tilde{\y})\Vert$.
The relationship between $\beta_X$, $\beta_y$, $\beta_{Xy}$, and $\beta_{yX}$ as defined in \eqref{betas} and $\beta$ is given in Appendix \ref{sec:appendix:fullLipschitz}. 
The proof of the lemma is given in Appendix \ref{sec:AppendixLemma9}.

\begin{lemma} \label{lemma:convergenceOfXYZWsequences}
Let $\lbrace(\X_t,\y_t)\rbrace_{t\ge1}$ and $\lbrace(\Z_{t},\w_t)\rbrace_{t\ge2}$ be the sequences generated by Algorithm \ref{alg:EG} with a step-size $\eta_t\le\frac{1}{\beta}$ where $\beta= \sqrt{2}\max\left\lbrace\sqrt{\beta_{X}^2+\beta_{yX}^2},\sqrt{\beta_{y}^2+\beta_{Xy}^2}\right\rbrace$. Then for all $t\ge1$ it holds that 
\begin{align*}
\Vert(\X_{t+1},\y_{t+1})-(\X^*,\y^*)\Vert & \le \Vert(\X_{t},\y_t)-(\X^*,\y^*)\Vert,
\\ \Vert(\Z_{t+1},\w_{t+1})-(\X^*,\y^*)\Vert & \le \Bigg(1+\frac{1}{\sqrt{1-\eta_{t}^2\beta^2}}\Bigg)\Vert(\X_t,\y_t)-(\X^*,\y^*)\Vert.
\end{align*}
\end{lemma}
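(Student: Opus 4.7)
My plan is to specialize Lemma \ref{lemma:inequalityForDecreasingRadius} to the Euclidean setting that underlies Algorithm \ref{alg:EG}. Since Algorithm \ref{alg:EG} performs exact Euclidean projections, in the notation of that lemma we have $\widehat{\Z}_{t+1}=\Z_{t+1}$ and $\widehat{\X}_{t+1}=\X_{t+1}$, so $\theta=0$ and $\breg_{\X}(\widehat{\Z}_{t+1},\Z_{t+1})=0$. Moreover, the Bregman distances reduce to $\breg_{\X}(\X,\tilde{\X})=\tfrac{1}{2}\Vert\X-\tilde{\X}\Vert_F^2$ and $\breg_{\y}(\y,\tilde{\y})=\tfrac{1}{2}\Vert\y-\tilde{\y}\Vert^2$, which sum to $\tfrac{1}{2}\Vert(\X,\y)-(\tilde{\X},\tilde{\y})\Vert^2$ on the product space.

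For the first claim, applying Lemma \ref{lemma:inequalityForDecreasingRadius} with these substitutions yields
\[
\tfrac{1}{2}\Vert(\X^*,\y^*)-(\X_{t+1},\y_{t+1})\Vert^2
\le \tfrac{1}{2}\Vert(\X^*,\y^*)-(\X_t,\y_t)\Vert^2
+ c_1 \breg_{\X}(\Z_{t+1},\X_t) + c_2 \breg_{\y}(\w_{t+1},\y_t),
\]
where $c_1=2\eta_t^2(\beta_X^2+\beta_{yX}^2)-1$ and $c_2=2\eta_t^2(\beta_y^2+\beta_{Xy}^2)-1$. Since $\beta^2=2\max\{\beta_X^2+\beta_{yX}^2,\beta_y^2+\beta_{Xy}^2\}$, the hypothesis $\eta_t\le 1/\beta$ ensures $c_1,c_2\le 0$, so the error terms may be dropped to yield $\Vert(\X_{t+1},\y_{t+1})-(\X^*,\y^*)\Vert\le\Vert(\X_t,\y_t)-(\X^*,\y^*)\Vert$.

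For the second claim, the idea is to first bound the one-step displacement $\Vert(\Z_{t+1},\w_{t+1})-(\X_t,\y_t)\Vert$ and then invoke the triangle inequality. To this end, I will return to the inequality above, drop the nonnegative left-hand side $\tfrac{1}{2}\Vert(\X^*,\y^*)-(\X_{t+1},\y_{t+1})\Vert^2$, and rearrange to obtain
\[
(1-\eta_t^2\beta^2)\cdot\tfrac{1}{2}\Vert(\Z_{t+1},\w_{t+1})-(\X_t,\y_t)\Vert^2
\le \tfrac{1}{2}\Vert(\X_t,\y_t)-(\X^*,\y^*)\Vert^2,
\]
using $1-2\eta_t^2(\beta_X^2+\beta_{yX}^2)\ge 1-\eta_t^2\beta^2$ and the analogous bound for the $\y$-block to pull out a common positive factor. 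Taking square roots gives $\Vert(\Z_{t+1},\w_{t+1})-(\X_t,\y_t)\Vert\le (1-\eta_t^2\beta^2)^{-1/2}\Vert(\X_t,\y_t)-(\X^*,\y^*)\Vert$, and then the triangle inequality through $(\X_t,\y_t)$ produces the advertised factor $1+1/\sqrt{1-\eta_t^2\beta^2}$.

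There is no serious obstacle here: the whole argument is a direct specialization of Lemma \ref{lemma:inequalityForDecreasingRadius}, and the only subtle point is verifying that the composite Lipschitz constant $\beta=\sqrt{2}\max\{\sqrt{\beta_X^2+\beta_{yX}^2},\sqrt{\beta_y^2+\beta_{Xy}^2}\}$ is large enough to dominate both coefficients $c_1$ and $c_2$ simultaneously, which it is by construction.
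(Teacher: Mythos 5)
Your proposal is correct and follows essentially the same route as the paper: both specialize Lemma \ref{lemma:inequalityForDecreasingRadius} with exact updates ($\theta=0$, vanishing approximation terms), absorb the two coefficients $2\eta_t^2(\beta_X^2+\beta_{yX}^2)-1$ and $2\eta_t^2(\beta_y^2+\beta_{Xy}^2)-1$ into the single constant $\eta_t^2\beta^2-1$, drop the resulting nonpositive term for the first claim, and for the second claim drop the nonnegative left-hand side to bound the displacement $\Vert(\Z_{t+1},\w_{t+1})-(\X_t,\y_t)\Vert$ before finishing with the triangle inequality. The only difference is that you spell out the Bregman-to-Euclidean specialization and the coefficient comparison that the paper compresses into its inequality \eqref{ineq:EGconvergenceOfSeries}.
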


We now turn to prove the central step in the analysis necessary for proving Theorem \ref{theroem:allPutTogether}. We prove that when close enough to a low-rank saddle-point of Problem \eqref{problem1}, under an assumption of an eigen-gap in the gradient of the saddle-point  (Assumption \ref{ass:strictcompSaddlePoint}), both projections onto the spectrahedron that are necessary in each iteration of Algorithm \ref{alg:EG}, result in low-rank matrices.

\begin{lemma}
\label{lemma:radiusOfLowRankProjections}
Let $(\X^*,\y^*)$ be an optimal solution to Problem \eqref{problem1}. Let $\tilde{r}$ denote the multiplicity of $\lambda_{n}(\nabla_{\X}f(\X^*,\y^*))$ and for any $r\ge \tilde{r}$ denote $\delta(r):=\lambda_{n-r}(\nabla_{\X}f(\X^*,\y^*))-\lambda_{n}(\nabla_{\X}f(\X^*,\y^*))$.
Then, for any $\eta\ge0$ and $(\X,\y)\in\mathcal{S}_n\times\mathcal{K}$, if
\begin{align} \label{ineq:EuclideanRadius}
& \Vert(\X,\y)-(\X^*,\y^*)\Vert \nonumber \\ & \le \frac{\eta}{1+\sqrt{2}\eta\max\lbrace \beta_{X},\beta_{Xy}\rbrace\left(1+\frac{1}{\sqrt{1-\eta^2\beta^2}}\right)}\max\left\lbrace\frac{\sqrt{\tilde{r}}\delta(r-\tilde{r}+1)}{2},\frac{\delta(r)}{(1+1/\sqrt{\tilde{r}})}\right\rbrace
\end{align}
where $\beta= \sqrt{2}\max\left\lbrace\sqrt{\beta_{X}^2+\beta_{yX}^2},\sqrt{\beta_{y}^2+\beta_{Xy}^2}\right\rbrace$, then $\rank\left( \Pi_{\mathcal{S}_n}[\X-\eta \nabla_{\X}f(\X,\y)]\right)\le r$ and $\rank\left( \Pi_{\mathcal{S}_n}[\X-\eta \nabla_{\X}f(\Z_+,\w_+)]\right)\le r$ where $\Z_+=\Pi_{\Sn}[\X-\eta\nabla_{\X}f(\X,\y)]$ and $\w_+=\Pi_{\mathcal{K}}[\y-\eta\nabla_{\y}f(\X,\y)]$. 
\end{lemma}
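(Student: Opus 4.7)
The plan is to reduce the rank-bound claim on each of the two projections to a perturbation statement around the ``reference matrix'' $\M^\ast := \X^\ast - \eta\nabla_{\X}f(\X^\ast,\y^\ast)$, and then control the perturbations using smoothness of $f$ and the non-expansiveness property from \cref{lemma:convergenceOfXYZWsequences}. First, I would analyze the eigen-structure of $\M^\ast$. By \cref{lemma:Xopt-subgradEigen} applied to the subgradient $\nabla_{\X}f(\X^\ast,\y^\ast)\in\partial g(\X^\ast)$ (whose first-order optimality we inherit from the saddle-point condition), the eigenvectors of $\X^\ast$ are among the $\tilde{r}$ eigenvectors of $-\nabla_{\X}f(\X^\ast,\y^\ast)$ attaining its largest eigenvalue. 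Consequently the top $\tilde r$ eigenvalues of $\M^\ast$ coincide on a common eigenspace shifted by $\eta\cdot\lambda_1(-\nabla_{\X}f(\X^\ast,\y^\ast))$, and $\M^\ast$ exhibits an eigen-gap of $\eta\delta(r)$ between position $\tilde r$ and position $r+1$, and one of $\eta\delta(r-\tilde r+1)$ between position $r$ and $r+1$.

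Next I would import the rank-bound criterion for Euclidean projections onto the spectrahedron developed in \cite{garberLowRankSmooth,garberStochasticLowRank}: if a symmetric matrix $\widetilde{\M}$ satisfies $\|\widetilde{\M}-\M^\ast\|_F \le \rho$ for a sufficiently small $\rho$ dictated by the eigen-gap at position $r$ of $\M^\ast$, then $\rank(\Pi_{\Sn}[\widetilde{\M}])\le r$. The two expressions inside the $\max$ in the statement correspond exactly to the two different sufficient conditions obtainable from this criterion: one exploits the multiplicity $\tilde{r}$ and the weaker gap $\delta(r-\tilde{r}+1)$ (giving the bound $\sqrt{\tilde{r}}\delta(r-\tilde{r}+1)/2$), while the other uses the global gap $\delta(r)$ at the cost of the factor $(1+1/\sqrt{\tilde r})$. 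I would invoke this as a black box; it is precisely the place where the eigen-structure of $\M^\ast$ translates into a quantitative Frobenius-ball of matrices whose spectrahedron projection has rank at most $r$.

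It then remains to show that both matrices $\X-\eta\nabla_{\X}f(\X,\y)$ and $\X-\eta\nabla_{\X}f(\Z_+,\w_+)$ lie inside that Frobenius-ball around $\M^\ast$ whenever $(\X,\y)$ lies in the ball given in \eqref{ineq:EuclideanRadius}. For the first one, the triangle inequality plus the $(\beta_X,\beta_{Xy})$-smoothness gives
\[
\|\X-\eta\nabla_{\X}f(\X,\y)-\M^\ast\|_F \le \|\X-\X^\ast\|_F + \eta(\beta_X\|\X-\X^\ast\|_F+\beta_{Xy}\|\y-\y^\ast\|)
\le (1+\sqrt{2}\eta\max\{\beta_X,\beta_{Xy}\})\|(\X,\y)-(\X^\ast,\y^\ast)\|,
\]
using $\|\X-\X^\ast\|_F+\|\y-\y^\ast\|\le\sqrt{2}\|(\X,\y)-(\X^\ast,\y^\ast)\|$. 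For the second one the same computation applies with $(\Z_+,\w_+)$ in place of $(\X,\y)$ in the gradient argument; here I apply \cref{lemma:convergenceOfXYZWsequences} to replace $\|(\Z_+,\w_+)-(\X^\ast,\y^\ast)\|$ by $(1+1/\sqrt{1-\eta^2\beta^2})\|(\X,\y)-(\X^\ast,\y^\ast)\|$. This yields the combined prefactor $1+\sqrt{2}\eta\max\{\beta_X,\beta_{Xy}\}(1+1/\sqrt{1-\eta^2\beta^2})$ appearing in the denominator of \eqref{ineq:EuclideanRadius}.

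The main obstacle is the bookkeeping, not a conceptual step: one has to be careful that the radius bound applies simultaneously to both projections (the second one is the tighter constraint because of the non-expansiveness factor) and that the perturbation is measured against $\M^\ast$ rather than against the previous iterate, so that the eigen-gap of $\M^\ast$ can be used directly. Once the two perturbation bounds are set up, matching them against the rank criterion from \cite{garberLowRankSmooth,garberStochasticLowRank} gives \eqref{ineq:EuclideanRadius} exactly, and the claim on the rank of both projections follows.
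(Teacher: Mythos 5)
Your overall architecture is the same as the paper's: you work around the reference matrix $\P^*=\X^*-\eta\nabla_{\X}f(\X^*,\y^*)$, whose spectral structure you get from \cref{lemma:connectionSubgradientNonSmoothAndSaddlePoint} and \cref{lemma:Xopt-subgradEigen}; you bound $\Vert\P-\P^*\Vert_F$ for $\P=\X-\eta\nabla_{\X}f(\X,\y)$ and $\P=\X-\eta\nabla_{\X}f(\Z_+,\w_+)$ exactly as the paper does (smoothness in the gradient argument, plus \cref{lemma:convergenceOfXYZWsequences} to pull $(\Z_+,\w_+)$ back to $(\X,\y)$, yielding the two prefactors and the observation that the second, larger one dictates the radius). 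Those perturbation bounds match \eqref{ineq:p1NormBound} and \eqref{ineq:p2NormBound} verbatim.

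The genuine gap is the step you declare a black box. The cited smooth-case results in \cite{garberLowRankSmooth, garberStochasticLowRank} do not hand you the statement in the form this lemma needs: a criterion parameterized separately by the SVD-rank parameter $r$, by the multiplicity $\tilde r$ of $\lambda_n(\nabla_{\X}f(\X^*,\y^*))$ (which can strictly exceed $\rank(\X^*)$), and producing precisely the two branches $\sqrt{\tilde r}\,\delta(r-\tilde r+1)/2$ and $\delta(r)/(1+1/\sqrt{\tilde r})$. This is where the paper does its actual work: from the projection formula \eqref{eq:euclidProj} it extracts the sufficient condition $\sum_{i=1}^{r}\lambda_i(\P)-r\lambda_{r+1}(\P)\ge 1$, lower-bounds $\sum_{i=1}^{\tilde r}\lambda_i(\P)$ via Ky Fan's inequality together with the identity $\sum_{i=1}^{\tilde r}\lambda_i(\P^*)=1-\eta\tilde r\lambda_n(\nabla_{\X}f(\X^*,\y^*))$, and then upper-bounds $\lambda_{r+1}(\P)$ in two ways: standard Weyl (giving the $\delta(r)$ branch with error factor $\tilde r+\sqrt{\tilde r}$) and the generalized Weyl inequality $\lambda_{r+1}(\P)\le\lambda_{r-\tilde r+2}(\P^*)+\lambda_{\tilde r}(\P-\P^*)$, whose index-$\tilde r$ perturbation term is what buys the $1/\sqrt{\tilde r}$ saving and requires $r\ge 2\tilde r-1$ (automatically enforced because $\delta(r-\tilde r+1)>0$ only then). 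Relatedly, your spectral picture of $\M^*$ is slightly off: the quantity $\delta(r-\tilde r+1)$ does not appear as a gap between positions $r$ and $r+1$ of $\M^*$; it enters only through the comparison of $\lambda_{r+1}(\P)$ with $\lambda_{r-\tilde r+2}(\P^*)$ in the generalized Weyl step. So to make your proposal complete you must supply this short eigenvalue-perturbation derivation rather than cite it; once it is in place, the rest of your argument assembles into the paper's proof of \eqref{ineq:EuclideanRadius}.
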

At a high-level, the proof of the lemma relies on leveraging the special spectral structure of the Euclidean projection onto the spectrahedron, as described in Eq. \eqref{eq:euclidProj}, to argue that if a certain simple condition on the eigenvalues of a matrix to project holds, then the projection has bounded rank. Due to optimality conditions, this condition holds with additional ``slack'' for a matrix obtained by performing a gradient step from any saddle-point satisfying (generalized) strict complementarity (Assumption \ref{ass:strictcompSaddlePoint}). Thus, by performing a perturbation analysis for the eigenvalues, we can argue that for any matrix obtained from a gradient step from a matrix close enough to such a saddle point, this condition also holds, and hence its projection also has  bounded rank. 

%\proof{Proof of Lemma \ref{lemma:radiusOfLowRankProjections}.}
\begin{proof}%[Proof of Lemma \ref{lemma:radiusOfLowRankProjections}]
Denote $\P^*=\X^*-\eta\nabla_{\X}f(\X^*,\y^*)$. By Lemma \ref{lemma:connectionSubgradientNonSmoothAndSaddlePoint}, $\nabla_{\X}f(\X^*,\y^*)$ is a subgradient of the corresponding nonsmooth objective $g(\X)=\max_{\y\in\mathcal{K}}f(\X,\y)$ at the point $\X^*$. Moreover, this subgradient also satisfies the first-order optimality condition. Hence, invoking Lemma \ref{lemma:Xopt-subgradEigen} with this subgradient  we have that
\begin{align} \label{eq:eigsOfPstar}
 \forall i\le \rank(\X^*) & :\ \lambda_i(\P^*) = \lambda_i(\X^*)-\eta\lambda_n(\nabla_{\X}f(\X^*,\y^*)); \nonumber \\
\forall i>\rank(\X^*) & :\ \lambda_i(\P^*) = -\eta\lambda_{n-i+1}(\nabla_{\X}f(\X^*,\y^*)). 
\end{align}

Therefore, using \eqref{eq:eigsOfPstar} and the fact that $\lambda_{n-i+1}(\nabla{}f_{\X}(\X^*,\y^*)) = \lambda_{n}(\nabla{}f_{\X}(\X^*,\y^*))$ for all $i\leq \tilde{r}$ we have,
\begin{align} \label{ineq:sumOfXeigsInProof}
\sum_{i=1}^{\tilde{r}} {\lambda_i(\P^*)} & = \sum_{i=1}^{\tilde{r}} {\lambda_i(\X^*-\eta\nabla_{\X}f(\X^*,\y^*))}
= \sum_{i=1}^{\tilde{r}} \lambda_i(\X^*) -\eta\sum_{i=1}^{\tilde{r}} \lambda_{n-i+1}(\nabla_{\X}f(\X^*,\y^*)) \nonumber
\\ & = \sum_{i=1}^{\rank(\X^*)}\lambda_i(\X^*) -\eta \sum_{i=1}^{\tilde{r}}\lambda_n(\nabla_{\X}f(\X^*,\y^*)) = 1 -\eta \tilde{r}\lambda_n(\nabla_{\X}f(\X^*,\y^*)).
\end{align}

Let $\P\in\mathbb{S}^n$.  From the structure of the Euclidean projection onto the spectrahedron (see Eq. \eqref{eq:euclidProj}), it follows that
a sufficient condition so that $\rank\left( \Pi_{\mathcal{S}_n}[\P]\right)\le r$ is that $\sum_{i=1}^{r} {\lambda_i(\P)}-r\lambda_{r+1}(\P) \ge 1$. We will bound the LHS of this inequality.

First, it holds that
\begin{align} \label{ineq:sumTildeRbound}
\sum_{i=1}^{\tilde{r}} {\lambda_i(\P)} & \underset{(a)}{\ge} \sum_{i=1}^{\tilde{r}} {\lambda_i(\P^*)} - \sum_{i=1}^{\tilde{r}} {\lambda_i(\P^*-\P)} 
\ge \sum_{i=1}^{\tilde{r}} {\lambda_i(\P^*)} - \sqrt{\tilde{r}\sum_{i=1}^{\tilde{r}} {\lambda_i^2(\P-\P^*)}} \nonumber
\\ & \ge \sum_{i=1}^{\tilde{r}} {\lambda_i(\P^*)} - \sqrt{\tilde{r}\sum_{i=1}^{n} {\lambda_i^2(\P-\P^*)}} 
\ge \sum_{i=1}^{\tilde{r}} {\lambda_i(\P^*)} - \sqrt{\tilde{r}}\Vert\P-\P^*\Vert_F \nonumber
\\ & \underset{(b)}{\ge}  1 -\eta \tilde{r}\lambda_n(\nabla_{\X}f(\X^*,\y^*))- \sqrt{\tilde{r}}\Vert\P-\P^*\Vert_F,
\end{align}
where (a) holds from Ky Fan?s inequality for eigenvalues and (b) holds from \eqref{ineq:sumOfXeigsInProof}.

Now, for any $r\ge \tilde{r}$ using Weyl's inequality and \eqref{eq:eigsOfPstar}
\begin{align} \label{ineq:Rplus1bound1}
\lambda_{r+1}(\P) & \le \lambda_{r+1}(\P^*)+\lambda_{1}(\P-\P^*) \le \lambda_{r+1}(\P^*)+\Vert\P-\P^*\Vert_F \nonumber
\\ & = -\eta\lambda_{n-r}(\nabla_{\X}f(\X^*,\y^*))+\Vert\P-\P^*\Vert_F.
\end{align}

Thus, combining \eqref{ineq:sumTildeRbound} and \eqref{ineq:Rplus1bound1} we obtain
\begin{align} \label{ineq:opt1_Pbound}
\sum_{i=1}^{r} {\lambda_i(\P)}-r\lambda_{r+1}(\P)  \ge \sum_{i=1}^{\tilde{r}} {\lambda_i(\P)}-\tilde{r}\lambda_{r+1}(\P)
\ge 1 +\eta \tilde{r}\delta(r) - (\tilde{r}+\sqrt{\tilde{r}})\Vert\P-\P^*\Vert_F.
\end{align} 

Alternatively, if $r\ge2\tilde{r}-1$ then using the general Weyl inequality and \eqref{eq:eigsOfPstar} we obtain
\begin{align} \label{ineq:Rplus1bound2}
\lambda_{r+1}(\P) & \le \lambda_{r-\tilde{r}+2}(\P^*)+\lambda_{\tilde{r}}(\P-\P^*) = \lambda_{r-\tilde{r}+2}(\P^*)+\sqrt{\lambda^2_{\tilde{r}}(\P-\P^*)} \nonumber
\\ & \le \lambda_{r-\tilde{r}+2}(\P^*)+\frac{1}{\sqrt{\tilde{r}}}\Vert\P-\P^*\Vert_F \nonumber
\\ & = -\eta\lambda_{n-r+\tilde{r}-1}(\nabla_{\X}f(\X^*,\y^*))+\frac{1}{\sqrt{\tilde{r}}}\Vert\P-\P^*\Vert_F.
\end{align} 

Thus, combining \eqref{ineq:sumTildeRbound} and \eqref{ineq:Rplus1bound2} we obtain
\begin{align} \label{ineq:opt2_Pbound}
& \sum_{i=1}^{r} {\lambda_i(\P)}-r\lambda_{r+1}(\P)  \ge \sum_{i=1}^{\tilde{r}} {\lambda_i(\P)}-\tilde{r}\lambda_{r+1}(\P)
\ge 1 +\eta \tilde{r}\delta(r-\tilde{r}+1) - 2\sqrt{\tilde{r}}\Vert\P-\P^*\Vert_F.
\end{align} 

Now we are left with bounding $\Vert\P-\P^*\Vert_F$. Note that by the smoothness of $f$, for any $(\X,\y)\in\Sn\times\mathcal{K}$ it holds that
\begin{align} \label{ineq:betaSmoothnessInProof}
& \Vert\nabla_{\X}f(\X,\y)-\nabla_{\X}f(\X^*,\y^*)\Vert_F \nonumber \\ & \le \Vert\nabla_{\X}f(\X,\y)-\nabla_{\X}f(\X^*,\y)\Vert_F+\Vert\nabla_{\X}f(\X^*,\y)-\nabla_{\X}f(\X^*,\y^*)\Vert_F \nonumber
\\ & \le \beta_{X}\Vert\X-\X^*\Vert_F+\beta_{Xy}\Vert\y-\y^*\Vert_2.
\end{align}

Taking $\P=\X-\eta\nabla_{\X}f(\X,\y)$ we get
\begin{align*}
\Vert\P-\P^*\Vert_F & = \Vert\X-\eta\nabla_{\X}f(\X,\y)-\X^*+\eta\nabla_{\X}f(\X^*,\y^*)\Vert_F
\\ & \le \Vert\X-\X^*\Vert_F+\eta\Vert\nabla_{\X}f(\X,\y)-\nabla_{\X}f(\X^*,\y^*)\Vert_F
\\ & \le \Vert(\X,\y)-(\X^*,\y^*)\Vert_F+\eta\Vert\nabla_{\X}f(\X,\y)-\nabla_{\X}f(\X^*,\y^*)\Vert_F
\\ & \le \Vert(\X,\y)-(\X^*,\y^*)\Vert_F+\eta\beta_{X}\Vert\X-\X^*\Vert_F+\eta\beta_{Xy}\Vert\y-\y^*\Vert_2,
\end{align*}
where the last inequality holds from \eqref{ineq:betaSmoothnessInProof}.

For any $a,b\ge0$ it holds that 
\begin{align*}
a\Vert\X-\X^*\Vert_F+b\Vert\y-\y^*\Vert_2 & \le \max\lbrace a,b\rbrace\left(\Vert\X-\X^*\Vert_F+\Vert\y-\y^*\Vert_2\right)
\\ & \le \sqrt{2}\max\lbrace a,b\rbrace \Vert(\X,\y)-(\X^*,\y^*)\Vert.
\end{align*}

Thus, by taking $a=\eta\beta_{X}$ and $b=\eta\beta_{Xy}$ we obtain
\begin{align} \label{ineq:p1NormBound}
\Vert\P-\P^*\Vert_F & \le \left(1+\sqrt{2}\eta\max\lbrace\beta_{X},\beta_{Xy}\rbrace\right)\Vert(\X,\y)-(\X^*,\y^*)\Vert.
\end{align}

Therefore, plugging \eqref{ineq:p1NormBound} into \eqref{ineq:opt1_Pbound} we obtain that the condition $\sum_{i=1}^{r} {\lambda_i(\P)}-r\lambda_{r+1}(\P)\ge1$ holds if
\begin{align*} 
\Vert(\X,\y)-(\X^*,\y^*)\Vert \le \frac{\eta\delta(r)}{(1+1/\sqrt{\tilde{r}})\left(1+\sqrt{2}\eta\max\lbrace\beta_{X},\beta_{Xy}\rbrace\right)}.
\end{align*}

Alternatively, plugging \eqref{ineq:p1NormBound} into \eqref{ineq:opt2_Pbound} we obtain that if $r\ge2\tilde{r}-1$ then the condition $\sum_{i=1}^{r} {\lambda_i(\P)}-r\lambda_{r+1}(\P)\ge1$ holds if
\begin{align*}
\Vert(\X,\y)-(\X^*,\y^*)\Vert \le \frac{\eta\sqrt{\tilde{r}}\delta(r-\tilde{r}+1)}{2\left(1+\sqrt{2}\eta\max\lbrace\beta_{X},\beta_{Xy}\rbrace\right)}.
\end{align*}

Note that $\delta(r-\tilde{r}+1)>0$ only if $r\ge2\tilde{r}-1$. Therefore, we can combine the last two inequalities to conclude that for any $r\ge\tilde{r}$ if
\begin{align} \label{ineq:XseriesRadius}
\Vert(\X,\y)-(\X^*,\y^*)\Vert \le \frac{\eta}{1+\sqrt{2}\eta\max\lbrace\beta_{X},\beta_{Xy}\rbrace}\max\Bigg\{\frac{\sqrt{\tilde{r}}\delta(r-\tilde{r}+1)}{2},\frac{\delta(r)}{(1+1/\sqrt{\tilde{r}})}\Bigg\}
\end{align}
then $\rank(\Pi_{\Sn}[\X-\nabla_{\X}f(\X,\y)])\le r$.

Similarly, taking $\P=\X-\eta\nabla_{\X}f(\Z_+,\w_+)$ we get
\begin{align*} 
\Vert\P-\P^*\Vert_F & = \Vert\X-\eta\nabla_{\X}f(\Z_+,\w_+)-\X^*+\eta\nabla_{\X}f(\X^*,\y^*)\Vert_F 
\\ & \le \Vert\X-\X^*\Vert_F+\eta\Vert\nabla_{\X}f(\Z_+,\w_+)-\nabla_{\X}f(\X^*,\y^*)\Vert_F
\\ & \le \Vert\X-\X^*\Vert_F+\eta\beta_{X}\Vert\Z_+-\X^*\Vert_F+\eta\beta_{Xy}\Vert\w_+-\y^*\Vert_2,
\end{align*}
where the last inequality holds from \eqref{ineq:betaSmoothnessInProof}.

For any $a,b,c\ge0$ it holds that 
\begin{align*}
& a\Vert\X-\X^*\Vert_F+b\Vert\Z_+-\X^*\Vert_F+c\Vert\w_+-\y^*\Vert_2 
\\ & \le a\Vert\X-\X^*\Vert_F+\max\lbrace b,c\rbrace\left(\Vert\Z_+-\X^*\Vert_F+\Vert\w_+-\y^*\Vert_2\right)
\\ & \le a\Vert\X-\X^*\Vert_F+\sqrt{2}\max\lbrace b,c\rbrace \Vert(\Z_+,\w_+)-(\X^*,\y^*)\Vert
\\ & \le a\Vert(\X,\y)-(\X^*,\y^*)\Vert_F+\sqrt{2}\max\lbrace b,c\rbrace \Vert(\Z_+,\w_+)-(\X^*,\y^*)\Vert
\\ & \le \left(a+\sqrt{2}\max\lbrace b,c\rbrace\left(1+\frac{1}{\sqrt{1-\eta^2\beta^2}}\right)\right)\Vert(\X,\y)-(\X^*,\y^*)\Vert,
\end{align*}
where the last inequality holds from Lemma \ref{lemma:convergenceOfXYZWsequences}.

Thus, by taking $a=1$, $b=\eta\beta_{X}$, and $c=\eta\beta_{Xy}$ we obtain
\begin{align} \label{ineq:p2NormBound}
\Vert\P-\P^*\Vert_F & \le \left(1+\sqrt{2}\eta\max\lbrace \beta_{X},\beta_{Xy}\rbrace\left(1+\frac{1}{\sqrt{1-\eta^2\beta^2}}\right)\right)\Vert(\X,\y)-(\X^*,\y^*)\Vert.
\end{align}

Therefore, plugging \eqref{ineq:p2NormBound} into \eqref{ineq:opt1_Pbound} we obtain that the condition $\sum_{i=1}^{r} {\lambda_i(\P)}-r\lambda_{r+1}(\P)\ge1$ holds if
\begin{align*}
\Vert(\X,\y)-(\X^*,\y^*)\Vert \le \frac{\eta\delta(r)}{(1+1/\sqrt{r^*})\left(1+\sqrt{2}\eta\max\lbrace \beta_{X},\beta_{Xy}\rbrace\left(1+\frac{1}{\sqrt{1-\eta^2\beta^2}}\right)\right)}.
\end{align*}

Alternatively, plugging \eqref{ineq:p2NormBound} into \eqref{ineq:opt2_Pbound} we obtain that if $r\ge2\tilde{r}-1$ then the condition $\sum_{i=1}^{r} {\lambda_i(\P)}-r\lambda_{r+1}(\P)\ge1$ holds if
\begin{align*}
\Vert(\X,\y)-(\X^*,\y^*)\Vert \le \frac{\eta\sqrt{\tilde{r}}\delta(r-\tilde{r}+1)}{1+\sqrt{2}\eta\max\lbrace \beta_{X},\beta_{Xy}\rbrace\left(1+\frac{1}{\sqrt{1-\eta^2\beta^2}}\right)}.
\end{align*}

Since $\delta(r-\tilde{r}+1)>0$ only if $r\ge2\tilde{r}-1$, by combining the last two inequalities we conclude that for any $r\ge\tilde{r}$ if
\begin{align} \label{ineq:ZseriesRadius}
& \Vert(\X,\y)-(\X^*,\y^*)\Vert \nonumber
\\ & \le \frac{\eta}{1+\sqrt{2}\eta\max\lbrace \beta_{X},\beta_{Xy}\rbrace\left(1+\frac{1}{\sqrt{1-\eta^2\beta^2}}\right)}\max\left\lbrace\frac{\sqrt{\tilde{r}}\delta(r-\tilde{r}+1)}{2},\frac{\delta(r)}{(1+1/\sqrt{\tilde{r}})}\right\rbrace
\end{align}
then $\rank(\Pi_{\Sn}[\X-\nabla_{\X}f(\Z_+,\w_+)])\le r$.

Taking the minimum between the RHS of \eqref{ineq:XseriesRadius} the RHS of\eqref{ineq:ZseriesRadius} gives us the bound on the radius as in \eqref{ineq:EuclideanRadius}.

\end{proof}

Now we can prove Theorem \ref{theroem:allPutTogether}.
\begin{proof}
We will prove by induction that for all $t\ge 1$ it holds that $\Vert(\X_t,\y_t)-(\X^*,\y^*)\Vert_F \le R_0(r)$ and $\Vert(\Z_{t},\w_{t})-(\X^*,\y^*)\Vert_F \le \left(1+\sqrt{2}\right)R_0(r)$, thus implying through Lemma \ref{lemma:radiusOfLowRankProjections} that all projections $\Pi_{\Sn}[\X_t-\eta\nabla_{\X}f(\X_t,\y_t)]$ and $\Pi_{\Sn}[\X_t-\eta\nabla_{\X}f(\Z_{t+1},\w_{t+1})]$ can be replaced with their rank-r truncated counterparts given in \eqref{truncatedProjection}, without any change to the result. 

The initialization $\Vert(\X_1,\y_1)-(\X^*,\y^*)\Vert\le R_0(r)$ holds trivially.
Now, by Lemma \ref{lemma:convergenceOfXYZWsequences}, using recursion, we have that for all $t\ge1$,
\begin{align*}
\Vert(\X_{t+1},\y_{t+1})-(\X^*,\y^*)\Vert & \le \Vert(\X_{t},\y_t)-(\X^*,\y^*)\Vert 
\\ & \le \cdots \le \Vert(\X_{1},\y_1)-(\X^*,\y^*)\Vert\le R_0(r),
\end{align*}
and for $\beta= \sqrt{2}\max\left\lbrace\sqrt{\beta_{X}^2+\beta_{yX}^2},\sqrt{\beta_{y}^2+\beta_{Xy}^2}\right\rbrace$ we have that,
\begin{align*}
\Vert(\Z_{t+1},\w_{t+1})-(\X^*,\y^*)\Vert & \le \Bigg(1+\frac{1}{\sqrt{1-\eta_{t}^2\beta^2}}\Bigg)\Vert(\X_t,\y_t)-(\X^*,\y^*)\Vert
\\ & \le \Bigg(1+\frac{1}{\sqrt{1-\eta_{t}^2\beta^2}}\Bigg)\Vert(\X_1,\y_1)-(\X^*,\y^*)\Vert
\\ & \le (1+\sqrt{2})\Vert(\X_1,\y_1)-(\X^*,\y^*)\Vert\le (1+\sqrt{2})R_0(r).
\end{align*}

Therefore, under the assumptions of the theorem, Algorithm \ref{alg:EG} can be run using only rank-r truncated projections, while maintaining the original extragradient convergence rate as stated in Lemma \ref{lemma:convergenceApproxMethod} with exact updates (that is, $\X_{t+1}=\widehat{\X}_{t+1}$, $\Z_{t+1}=\widehat{\Z}_{t+1}$, and $\theta=0$).
\end{proof}

\begin{remark}
A downside of considering the saddle-point formulation \eqref{problem1} when attempting to solve Problem \eqref{nonSmoothProblem} that arises from Theorem \ref{theroem:allPutTogether}, is that not only do we need a ``warm-start'' initialization for the original primal matrix variable $\X$, in the saddle-point formulation we need a ``warm-start''  for the primal-dual pair $(\X,\y)$. Nevertheless, as we demonstrate extensively in Section \ref{sec:expr}, it seems that very simple initialization schemes work very well in practice.
\end{remark}

\subsection{Efficiently-computable certificates for correctness of low-rank Euclidean projections}\label{sec:certificate}

Since Theorem \ref{theroem:allPutTogether} only applies in some neighborhood of an optimal solution, it is of interest to have a procedure for verifying if the rank-$r$ truncated projection of a given point indeed equals the exact Euclidean projection. In particular, from a practical point of view, it does not matter whether the conditions of Theorem \ref{theroem:allPutTogether} hold. In practice, as long as the truncated projection equals the exact projection, we are guaranteed that Algorithm \ref{alg:EG} converges correctly with rate $O(1/t)$, without needing to verify any other condition.
Luckily, the expression in \eqref{eq:euclidProj} which characterizes the structure of the Euclidean projection onto the spectrahedron, yields exactly such a verification procedure. As already noted in  \cite{garberLowRankSmooth}, for any $\X\in\mbS^n$, we have $\widehat{\Pi}_{\Sn}^r[\X]=\Pi_{\Sn}[\X]$ if and only if the condition
\begin{align*} %\label{cond:lowRankProjection}
\sum_{i=1}^r\lambda_i(\X)\ge 1+r\cdot\lambda_{r+1}(\X)
\end{align*}
holds. 
Note that verifying this condition  simply requires increasing the rank of SVD computation by one, i.e., computing a rank-$(r+1)$ SVD of the matrix to project rather than a rank-$r$ SVD.

\section{Mirror-Prox with Low-Rank Matrix Exponentiated Gradient Updates}\label{sec:MEG}
In this section we formally state and prove our second main result: A low-rank approximated mirror-prox method with MEG updates, when initialized in the proximity of a saddle-point which satisfies standard strict complementarity (Assumption \ref{ass:strictcompSaddlePoint} with $r=r^*$), converges with the original $\mathcal{O}(1/t)$ mirror-prox rate, while requiring only two low-rank SVD computations per iteration. %This primal updates of this method are based on the bregman distance with respect to the von Neumann entropy as defined in \eqref{eq:bregmanVNE}. %The algorithm is given as Algorithm \ref{alg:EGMP}.

%By equipping $\mathbb{S}^n$ with the nuclear norm rather than the Euclidean norm, the smoothness parameters $\beta_X,\beta_{Xy}$, as defined in \eqref{betas}, are measured with respect to the spectral norm instead of a Frobenius norm. As discussed in Section \ref{sec:Applications}, in many settings this can lead to significantly improved convergence rates.

The standard mirror-prox method with primal matrix exponentiated gradient steps (and generic dual steps) is given in Algorithm \ref{alg:EGMP}.
\begin{algorithm}[H]
	\caption{Mirror-prox with MEG updates for saddle-point problems}\label{alg:EGMP}
	\begin{algorithmic}
	\State \textbf{Input:} $\{\eta_t\}_{t\ge1}$ - sequence of (positive) step-sizes
		\State \textbf{Initialization:} $(\X_1,\y_1)\in\Sn\cap\mathbb{S}^n_{++}\times\mathcal{K}\cap\textrm{dom}(\omega_{\y})$
		\For{$t = 1,2,...$} 
		    \State $\P_t = \exp(\log(\X_t)-\eta_t\nabla_{\X}f(\X_t,\y_t))$
		    \State $\Z_{t+1}=\frac{\P_t}{\trace(\P_t)}$   
			\State $\w_{t+1}=\argmin_{\y\in\mathcal{K}}\lbrace\langle-\eta_t\nabla_{\y}f(\X_t,\y_t),\y\rangle+\breg_{\y}(\y,\y_t)\rbrace$
			\State $\B_t = \exp(\log(\X_t)-\eta_t\nabla_{\X}f(\Z_{t+1},\w_{t+1}))$  
		    \State $\X_{t+1}=\frac{\B_t}{\trace(\B_t)}$     
			\State $\y_{t+1}=\argmin_{\y\in\mathcal{K}}\lbrace\langle-\eta_t\nabla_{\y}f(\Z_{t+1},\w_{t+1}),\y\rangle+\breg_{\y}(\y,\y_t)\rbrace$
        \EndFor
	\end{algorithmic}
\end{algorithm}

The low-rank mirror-prox method with MEG steps is given in Algorithm \ref{alg:LRMP}. As discussed in the Introduction, and following our recent work  \cite{garber2020efficient}, our low-rank variant follows from replacing the exact MEG steps in Algorithm \ref{alg:EGMP}, which are also given in  \eqref{exponentiatedGradientUpdate},  with the corresponding approximations given in \eqref{lowrankexponentiatedGradientUpdate}, which only require a rank-$r$ SVD, and thresholds all lower $n-r$ eigenvalues to the same (small) value.
\begin{algorithm}[H]
	\caption{Low-rank mirror-prox with approximated MEG updates for saddle-point problems}\label{alg:LRMP}
	\begin{algorithmic}
	\State \textbf{Input:} $\{\eta_t\}_{t\ge1}$ - sequence of (positive) step-sizes, $\lbrace\varepsilon_t\rbrace_{t\ge 0}\subset[0,1]$ - sequence of approximation parameters, $r\in\{1,2,\dots,n-1\}$ - SVD rank parameter, initialization matrix -  $\X_0\in\mS_n$ such that $\rank(\X_0) = r$
		\State \textbf{Initialization:} $(\X_1,\y_1)=((1-\varepsilon_0)\X_0+\frac{\varepsilon_0}{n}\I,\y_1)$ where $\y_1\in\mathcal{K}$
		\For{$t = 1,2,...$} 
		    \State $\P_t = \exp(\log(\X_t)-\eta_t\nabla_{\X}f(\X_t,\y_t))$ \textrm{\{not to be explicitly computed\}}
		    \State $\P^r_t = \V_{\P}^r\bLambda_{\P}^r{\V_{\P}^r}^{\top}$ --- rank-$r$ approximation of $\P_t$
		    \State $\Z_{t+1}=(1-\varepsilon_t)\frac{\P^r_t}{\trace(\P^r_t)}+\frac{\varepsilon_t}{n-r}(\I-\V_{\P}^r{\V_{\P}^r}^{\top})$   \textrm{\{denote $\widehat{\Z}_{t+1}=\frac{\P_t}{\trace(\P_t)}$\}}
			\State $\w_{t+1}=\argmin_{\y\in\mathcal{K}}\lbrace\langle-\eta_t\nabla_{\y}f(\X_t,\y_t),\y\rangle+\breg_{\y}(\y,\y_t)\rbrace$
			\State $\B_t = \exp(\log(\X_t)-\eta_t\nabla_{\X}f(\Z_{t+1},\w_{t+1}))$  \textrm{\{not to be explicitly computed\}}
		    \State $\B^r_t = \V_{\B}^r\bLambda_{\B}^r{\V_{\B}^r}^{\top}$ --- rank-$r$ approximation of $\B_t$
		    \State $\X_{t+1}=(1-\varepsilon_t)\frac{\B^r_t}{\trace(\B^r_t)}+\frac{\varepsilon_t}{n-r}(\I-\V_{\B}^r{\V_{\B}^r}^{\top})$     \textrm{\{denote $\widehat{\X}_{t+1}=\frac{\B_t}{\trace(\B_t)}$\}}
			\State $\y_{t+1}=\argmin_{\y\in\mathcal{K}}\lbrace\langle-\eta_t\nabla_{\y}f(\Z_{t+1},\w_{t+1}),\y\rangle+\breg_{\y}(\y,\y_t)\rbrace$
        \EndFor
	\end{algorithmic}
\end{algorithm}

The updates of both primal sequences $\{\X_t\}_{t\geq 1}, \{\Z_t\}_{t\geq 2}$  
involve computing the top-$r$ components in the eigen-decomposition of the matrix $\exp(\log(\X_t)-\eta_t\nabla_t)$ for some $\nabla_t\in\mathbb{S}^n$. Importantly, this could be carried out from one iteration to next while storing only rank-$r$ matrices and computing only rank-$r$ orthogonal decompositions: to compute the rank-$r$ SVD of $\exp(\log(\X_t)-\eta_t\nabla_t)$, we compute the top-$r$ eigen-decomposition of $\log(\X_t)-\eta_t\nabla_t$, and then exponentiate the eigenvalues. To compute the top-$r$ eigen-decomposition of $\log(\X_t)-\eta_t\nabla_t$, fast iterative methods for thin SVD (see for example \cite{Musco15}) could be applied. 
By our update, the eigen-decomposition of $\X_t$ has the form of $\X_t=(1-\varepsilon_{t-1})\V^r\bLambda^r{\V^r}^{\top}+\frac{\varepsilon_{t-1}}{n-r}\left(\I-\V^r{\V^r}^{\top}\right)$ for some $\V^r\in\reals^{n\times r}$ and diagonal matrix $\bLambda^r\in\reals^{r\times r}$, and thus $\log(\X_t)$ admits the following low-rank factorization:
$\log(\X_t)=\V^r\log((1-\varepsilon_{t-1})\bLambda^r){\V^r}^{\top}+\log\left(\frac{\varepsilon_{t-1}}{n-r}\right)\left(\I-\V^r{\V^r}^{\top}\right)$, which allows for fast implementations when computing the top-$r$ components in the eigen-decomposition of $\log(\X_t)-\eta_t\nabla_t$, as discussed above. Importantly, an efficient implementation will indeed store the current primal iterates $\X_t,\Z_t$ in factored form as suggested above, without computing them explicitly. In this way only $n\times r$ matrices (the matrices $\V^r_{\P}$  and $\V^r_{\B}$)  needs to be stored in memory (aside from the gradients of course). 
Please see more detailed discussions in \cite{garber2020efficient}.

Note that the update of the dual variables can be w.r.t. any bregman distance, under the assumption that solving the corresponding minimization problems over the set $\mK$, as required in Algorithms \ref{alg:EGMP} and \ref{alg:LRMP}, can be done efficiently.

We now state our second main theorem which concerns the convergence of Algorithm \ref{alg:LRMP}.

\begin{theorem}\label{thm:allPutTogetherVonNeumann}
Fix a saddle point $(\X^*,\y^*)$ to Problem \eqref{problem1} for which Assumption \ref{ass:strictcompSaddlePoint} holds with parameters $r^*$ and $\delta>0$ where $r^*=\rank(\X^*)$.
Let $\{(\X_t,\y_t)\}_{t\geq 1}$ and $\{(\Z_t,\w_t)\}_{t\geq 2}$ be the sequences of iterates generated by Algorithm \ref{alg:LRMP} with SVD rank parameter $r=r^*$ and with a fixed step-size $\eta_t=\eta:=\min\left\lbrace\frac{1}{\beta_{X}+\beta_{Xy}},\frac{1}{\beta_{y}+\beta_{yX}},\frac{1}{2\sqrt{2}\sqrt{\beta_{X}^2+\beta_{yX}^2}},\frac{1}{2\sqrt{\beta_{y}^2+\beta_{Xy}^2}}\right\rbrace$, where $\beta_X,\beta_y,\beta_{Xy},\beta_{yX}\ge0$ are as defined in \eqref{betas}. Suppose that for all $t\geq 0$: 
\begin{align*}
& \varepsilon_t=
\\ & \min\left\lbrace\hspace{-0.5mm} 1,\min\left\lbrace\hspace{-0.5mm}\frac{\tilde{\varepsilon}_0^2}{16\max\{G^2\eta^2,1\}},\left(\hspace{-0.5mm}\frac{\delta}{16\sqrt{2}\max\lbrace\beta_{X}\hspace{-0.5mm},\hspace{-0.5mm}\beta_{Xy}\rbrace\left(\sqrt{5}\hspace{-0.5mm}+\hspace{-0.5mm}\sqrt{\eta G}\right)}\hspace{-0.5mm}\right)^4\hspace{-0.5mm}\right\rbrace\frac{1}{(t+1+c)^3}\hspace{-0.5mm}\right\rbrace
\end{align*}
for some $\tilde{\varepsilon}_0\leq R_0^2$,
where
\begin{align*}
R_0 := \frac{1}{4}\left(7\max\lbrace\beta_{X},\beta_{Xy}\rbrace+\left(1+\frac{2\sqrt{2r^*}}{\lambda_{r^*}(\X^*)}\right)G\right)^{-1}\delta,
\end{align*}
$c\ge \frac{12}{\eta\delta}$, and $G\geq\sup_{(\X,\y)\in\Sn\times\mathcal{K}}\Vert \nabla_{\X} f(\X,\y)\Vert_2$. Finally, assume the initialization matrix $\X_0$ satisfies $\rank(\X_0) = r^*$, assume $\X_1$ satisfies $\breg_{\X}(\X^*,\X_1)\le\frac{1}{2}R_0^2$ (see conditions on the parameters $\X_0,\X^*,\tilde{\varepsilon}_0,\frac{1}{\sqrt{2}}R_0$ in Lemma \ref{lem:warmstart}),
and assume $\y_1$ satisfies $\breg_{\y}(\y^*,\y_1)\le\frac{1}{2}R_0^2$.
Then, for any $T\geq 1$ it holds that
%\textcolor{red}{
\begin{align*}
& \max_{\y\in\mathcal{K}} f\left(\frac{1}{T}\sum_{t=1}^T \Z_{t+1},\y\right) -  \min_{\X\in\Sn} f\left(\X,\frac{1}{T}\sum_{t=1}^T\w_{t+1}\right)
\\ & \le \frac{D^2}{\eta T} + \frac{\eta^{-1}+2(\beta_{X}+\beta_{yX})}{16\max\{G^2\eta^2,1\}}\frac{R_0^4}{T} +\frac{R_0^2}{\max\lbrace\eta,1\rbrace T},
\end{align*}
%}
where $D^2:=\sup_{(\X,\y)\in{\Sn\times\mathcal{K}}}\left(\breg_{\X}(\X,\X_1)+\breg_{\y}(\y,\y_1)\right)$.
\end{theorem}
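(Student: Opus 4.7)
The plan is to proceed by induction on $t$, establishing that the iterates stay confined to a Bregman ball of radius $R_0$ around the saddle point: $\breg_{\X}(\X^*,\X_t) + \breg_{\y}(\y^*,\y_t) \leq R_0^2$ for every $t \geq 1$. The base case follows directly from the warm-start hypothesis on $\breg_{\X}(\X^*,\X_1)$ and $\breg_{\y}(\y^*,\y_1)$ (each $\leq R_0^2/2$), where Lemma \ref{lem:warmstart} translates the assumption on the initialization matrix $\X_0$ into the required Bregman bound on $\X_1$. For the inductive step, the step-size condition $\eta \leq \min\{1/(2\sqrt{2}\sqrt{\beta_X^2+\beta_{yX}^2}), 1/(2\sqrt{\beta_y^2+\beta_{Xy}^2})\}$ triggers Lemma \ref{lemma:decreasingRadius} (with $\theta=1$), which propagates the bound from step $t$ to step $t+1$ up to the approximation errors $\breg_{\X}(\widehat{\Z}_{t+1},\Z_{t+1})$ and the defect $\breg_{\X}(\X^*,\X_{t+1})-\breg_{\X}(\X^*,\widehat{\X}_{t+1})$ introduced by the low-rank truncations.

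The core technical step, which I would adapt from \cite{garber2020efficient}, is to leverage the strict complementarity assumption to control these two error sources by $O(\varepsilon_t)$. Because the update structure forces $\X_t = (1-\varepsilon_{t-1})\V^r\Lambda^r{\V^r}^{\top} + \frac{\varepsilon_{t-1}}{n-r^*}(\I-\V^r{\V^r}^{\top})$, the bottom $n-r^*$ eigenvalues of $\log(\X_t)$ are all equal to the very negative value $\log(\varepsilon_{t-1}/(n-r^*))$, while its top eigenvalues remain of order one. Combined with the eigen-gap $\delta$ in $\nabla_{\X}f(\X^*,\y^*)$ guaranteed by Assumption \ref{ass:strictcompSaddlePoint}, and noting that $\nabla_{\X}f(\X_t,\y_t)$ (respectively $\nabla_{\X}f(\Z_{t+1},\w_{t+1})$) is close to $\nabla_{\X}f(\X^*,\y^*)$ via smoothness and the inductive hypothesis, Weyl-type inequalities yield a large eigen-gap in $\log(\X_t)-\eta\nabla_{\X}f(\X_t,\y_t)$ (and the analogous matrix defining $\B_t$) between positions $r^*$ and $r^*+1$. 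After exponentiation and normalization, the bottom $n-r^*$ eigenvalues of $\P_t$ and $\B_t$ collectively amount to at most an $O(\varepsilon_t)$ fraction of the trace, so that both the Bregman error $\breg_{\X}(\widehat{\Z}_{t+1},\Z_{t+1})$ and the defect term scale like $\varepsilon_t$ up to constants depending on $\delta,\,r^*,\,\lambda_{r^*}(\X^*),\,\eta,\,G$.

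The main obstacle is pinning down these constants sharply enough to close the induction: the per-iteration Bregman-growth terms in Lemma \ref{lemma:decreasingRadius} are of order $\varepsilon_t + \sqrt{\varepsilon_t}$, so the schedule $\varepsilon_t = \Theta(1/(t+1+c)^3)$ must make $\sum_{t\geq 1}\sqrt{\varepsilon_t}$ fall below $R_0^2/2$. This is precisely what the calibration of $R_0$, $\tilde{\varepsilon}_0$, and the lower bound $c \geq 12/(\eta\delta)$ is designed to achieve; the factor $\delta$ in the denominator of $c$ compensates for the worst of the constants produced by the eigen-gap argument. Once the induction is closed, both $\sum_{t=1}^T\breg_{\X}(\widehat{\Z}_{t+1},\Z_{t+1})$ and $\sum_{t=1}^T\sqrt{\breg_{\X}(\widehat{\Z}_{t+1},\Z_{t+1})}$ are bounded by $T$-independent constants, as is $\sum_{t=1}^T(\breg_{\X}(\X^*,\X_{t+1})-\breg_{\X}(\X^*,\widehat{\X}_{t+1}))$ (using a telescoping style estimate for the latter). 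Plugging these bounds into Lemma \ref{lemma:convergenceApproxMethod} with the chosen step-size $\eta$, whose upper bounds $1/(\beta_X+\beta_{Xy})$ and $1/(\beta_y+\beta_{yX})$ are compatible with the lemma's step-size hypothesis for $\theta=1$, produces exactly the stated $O(1/T)$ bound on the primal-dual gap, with the $R_0^4/T$ and $R_0^2/T$ terms in the theorem's conclusion arising from the summed approximation errors.
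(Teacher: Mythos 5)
Your plan matches the paper's proof essentially step for step: an induction confining the iterates to a Bregman ball around $(\X^*,\y^*)$ (the paper closes it at radius $\sqrt{2}R_0$, i.e., $\breg_{\X}(\X^*,\X_t)+\breg_{\y}(\y^*,\y_t)\le 2R_0^2$, rather than $R_0^2$, with $c\ge 12/(\eta\delta)$ used specifically to keep $-\frac{1}{\eta}\log(\varepsilon_{t-1}/\varepsilon_t)\ge-\delta/4$ in the eigen-gap radius condition), the strict-complementarity/eigen-gap argument controlling both approximation errors by $2\varepsilon_t$ per iteration (the paper's Lemma \ref{lemma:series_error}), the radius recursion via Lemma \ref{lemma:decreasingRadius} with $\theta=1$, and the final application of Lemma \ref{lemma:convergenceApproxMethod} with $\sum_t\varepsilon_t$ and $\sum_t\sqrt{\varepsilon_t}$ bounded by $T$-independent constants. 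One small correction: the defect term $\breg_{\X}(\X^*,\X_{t+1})-\breg_{\X}(\X^*,\widehat{\X}_{t+1})$ is not handled by a telescoping estimate but by the same per-iteration $O(\varepsilon_t)$ bound from the eigen-gap lemma, exactly as you describe for the $\widehat{\Z}_{t+1}$ error.
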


Translating the convergence result we obtained for saddle-points in Theorem \ref{thm:allPutTogetherVonNeumann} back to the original nonsmooth problem, Problem \eqref{nonSmoothProblem}, we obtain the following convergence result. The proof is identical to the proof of Corollary \ref{cor:nonsmoothExtragradient}.

\begin{corollary} \label{cor:nonsmoothVNE}
Fix an optimal solution $\X^*\in\Sn$ to Problem \eqref{nonSmoothProblem} and assume Assumption \ref{ass:struct} holds. Let $\G^*\in\partial g(\X^*)$ which satisfies that $\langle\X-\X^*,\G^*\rangle\ge0$  for all $\X\in\Sn$. Define $\delta:=\lambda_{n-r^*}(\G^*)-\lambda_{n}(\G^*)>0$. Define $f$ as in Problem \eqref{problem1} and let and let $\lbrace(\X_t,\y_t)\rbrace_{t\ge1}$ and $\lbrace(\Z_{t},\w_t)\rbrace_{t\ge2}$ be the sequences of iterates generated by Algorithm \ref{alg:LRMP} where $\eta$, $R_0$, $\lbrace\varepsilon_t\rbrace_{t=1}^T$, $\X_0$, $\X_1$, and $\y_1$ are as defined in Theorem \ref{thm:allPutTogetherVonNeumann}.
Then, for all $T\ge1$ it holds that
%\textcolor{red}{
\begin{align*}
g\left(\frac{1}{T}\sum_{t=1}^T \Z_{t+1}\right)-g(\X^*) 
& \le \frac{D^2}{\eta T} + \frac{\eta^{-1}+2(\beta_{X}+\beta_{yX})}{16\max\{G^2\eta^2,1\}}\frac{R_0^4}{T} +\frac{R_0^2}{\max\lbrace\eta,1\rbrace T},
\end{align*} %}
where $D^2:=\hspace{-0.5mm}\sup\limits_{(\X,\y)\in{\Sn\times\mathcal{K}}}\hspace{-0.5mm}\left(\breg_{\X}(\X,\X_1)+\breg_{\y}(\y,\y_1)\right)$ and $G\geq\hspace{-0.5mm}\sup\limits_{(\X,\y)\in\Sn\times\mathcal{K}}\hspace{-0.5mm}\Vert \nabla_{\X} f(\X,\y)\Vert_2$.
\end{corollary}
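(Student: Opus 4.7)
The plan is to mirror the proof of Corollary~\ref{cor:nonsmoothExtragradient} verbatim, merely swapping the invocation of Theorem~\ref{theroem:allPutTogether} for Theorem~\ref{thm:allPutTogetherVonNeumann}. The work splits into three short steps: lift $\X^*$ to a saddle-point of the smooth min-max reformulation, apply the saddle-point convergence theorem, and convert the resulting primal-dual gap into a bound on $g(\Z_{t+1})-g(\X^*)$.

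First, since Assumption~\ref{ass:struct} is in force and $\G^*\in\partial g(\X^*)$ satisfies the first-order optimality condition, the second (converse) half of Lemma~\ref{lemma:connectionSubgradientNonSmoothAndSaddlePoint} produces a $\y^*\in\argmax_{\y\in\mathcal{K}}f(\X^*,\y)$ such that $(\X^*,\y^*)$ is a saddle-point of Problem~\eqref{problem1} with $\nabla_{\X}f(\X^*,\y^*)=\G^*$. Combined with the hypothesis $\lambda_{n-r^*}(\G^*)-\lambda_n(\G^*)=\delta>0$, this identification means that Assumption~\ref{ass:strictcompSaddlePoint} holds at $(\X^*,\y^*)$ with parameters $r^*$ and $\delta$, so all hypotheses of Theorem~\ref{thm:allPutTogetherVonNeumann} are satisfied by the chosen step-size $\eta$, rank $r=r^*$, error schedule $\{\varepsilon_t\}$, and warm-start pair $(\X_1,\y_1)$.

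Second, I would invoke Theorem~\ref{thm:allPutTogetherVonNeumann} directly to obtain
\begin{align*}
\frac{1}{T}\sum_{t=1}^T\max_{\y\in\mathcal{K}}f(\Z_{t+1},\y)-\frac{1}{T}\sum_{t=1}^T\min_{\X\in\Sn}f(\X,\w_{t+1})\le \frac{D^2}{\eta T} + \frac{\eta^{-1}+2(\beta_{X}+\beta_{yX})}{16\max\{G^2\eta^2,1\}}\frac{R_0^4}{T} +\frac{R_0^2}{\max\lbrace\eta,1\rbrace T}.
\end{align*}
Third, I would convert the primal-dual gap into a bound on the nonsmooth objective exactly as in Corollary~\ref{cor:nonsmoothExtragradient}: since $g(\Z_{t+1})=\max_{\y\in\mathcal{K}}f(\Z_{t+1},\y)$, the minimum over $t\in[T]$ is bounded above by the time-average, upper-bounding the first term. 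For the second term, $\min_{\X\in\Sn}f(\X,\w_{t+1})\le f(\X^*,\w_{t+1})\le\max_{\y\in\mathcal{K}}f(\X^*,\y)=g(\X^*)$, so the time-average is bounded above by $g(\X^*)$. Subtracting these two bounds gives the stated rate for $\min_{t\in[T]}g(\Z_{t+1})-g(\X^*)$.

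There is essentially no technical obstacle since the entire nonsmooth-to-saddle-point reduction is already encapsulated in Lemma~\ref{lemma:connectionSubgradientNonSmoothAndSaddlePoint}; the only item worth verifying carefully is that the eigen-gap condition assumed on the nonsmooth subgradient $\G^*$ carries over to the saddle-point gradient $\nabla_{\X}f(\X^*,\y^*)$ with \emph{identical} parameters, which is ensured precisely by the identity $\nabla_{\X}f(\X^*,\y^*)=\G^*$ supplied by that lemma under Assumption~\ref{ass:struct}.
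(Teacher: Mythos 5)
Your proposal is correct and follows exactly the paper's route: the paper states that the proof of this corollary is identical to that of Corollary \ref{cor:nonsmoothExtragradient}, i.e., use Lemma \ref{lemma:connectionSubgradientNonSmoothAndSaddlePoint} under Assumption \ref{ass:struct} to lift $\X^*$ to a saddle-point with $\nabla_{\X}f(\X^*,\y^*)=\G^*$ (so strict complementarity transfers with the same $r^*,\delta$), invoke Theorem \ref{thm:allPutTogetherVonNeumann}, and convert the averaged primal-dual gap via $\min_{t\in[T]}g(\Z_{t+1})\le\frac{1}{T}\sum_t\max_{\y}f(\Z_{t+1},\y)$ and $\frac{1}{T}\sum_t\min_{\X}f(\X,\w_{t+1})\le g(\X^*)$. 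No gaps.
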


\begin{remark}
Theorem \ref{thm:allPutTogetherVonNeumann} requires that Assumption \ref{ass:strictcompSaddlePoint} holds with parameter $r=\rank(\X^*)$, i.e., that the saddle point problem satisfies strict complementarity, and consequently, Corollary \ref{cor:nonsmoothVNE} requires that the nonsmooth Problem \eqref{nonSmoothProblem} satisfies strict complementarity (Assumption \ref{ass:struct}). This is opposed to the corresponding Theorem \ref{theroem:allPutTogether} and Corollary \ref{cor:nonsmoothExtragradient} for the Euclidean extragradient method, which allow to relax these strict complementarity assumptions and only requires generalized strict complementarity. It is currently unclear to us if the conditions of Theorem  \ref{thm:allPutTogetherVonNeumann} could be relaxed to only require generalized strict complementarity with some $r\geq\rank(\X^*)$. 
\end{remark}

The following lemma states a condition on the initialization of the $\X$ variable w.r.t. the parameters $\X_0$ and $\varepsilon_0$ in Algorithm \ref{alg:LRMP}, so that the initial point $\X_1$ is close enough to the low-rank optimal solution, as is required in Theorem \ref{thm:allPutTogetherVonNeumann}. This lemma is identical to Lemma 4 in \cite{garber2020efficient}.

\begin{lemma}[Warm-start initialization]\label{lem:warmstart}
Let $(\X^*,\y^*)\in\mX^*\times\mY^*$ be an optimal solution to Problem \eqref{problem1}. Let $\rank(\X^*):=r^*$, and let $\X\in\mS_n$ be such that $\rank(\X) = \rank(\X^*)$. Let $\varepsilon\in(0,3/4]$ and fix some $R>0$. Suppose
\begin{align}\label{eq:init:equalRank}
\trace(\X^*\log\X^* - \X^*\V_{\X}\log(\bLambda_{\X})\V_{\X}^{\top}) +  \frac{2\lambda_1(\X^*)}{\lambda_{r^*}(\X^*)^2}\log\left({\frac{n}{\varepsilon}}\right)\Vert{\X^*-\X}\Vert_F^2  + 4\varepsilon \leq R^2.
\end{align}
holds, where  $\V_{\X}\bLambda_{\X}\V_{\X}^{\top}$ denotes the compact-form eigen-decomposition of $\X$ (i.e., $\bLambda_{\X}\succ\mathbf{0}$), and similarly $\V_{\X^*}\bLambda_{\X^*}\V_{\X^*}^{\top}$ denotes the compact-form eigen-decomposition of $\X^*$. Then, the matrix $\W = (1-\varepsilon)\X + \frac{\varepsilon}{n }\I$ satisfies $\breg_{\X}(\X^*,\W) \leq R^2$. 
\end{lemma}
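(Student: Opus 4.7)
Because $\W \succ \mathbf{0}$, $\log\W$ is well-defined, and since $\trace(\X^*)=\trace(\W)=1$ the von Neumann Bregman divergence simplifies to $\breg_{\X}(\X^*,\W)=\trace(\X^*\log\X^*)-\trace(\X^*\log\W)$. The plan is to write out $\log\W$ explicitly by splitting along $\textrm{range}(\X)$ and its orthogonal complement, use operator monotonicity of the log to isolate the ``compact log of $\X$'' term appearing in the hypothesis, and finally control the residual ``misalignment'' between $\X$ and $\X^*$ by a Davis--Kahan style perturbation estimate.

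Writing $\X=\V_{\X}\Lambda_{\X}\V_{\X}^{\top}$ in compact form and choosing $\V_{\X,\perp}\in\reals^{n\times(n-r^*)}$ with orthonormal columns spanning $\ker(\X)$, the eigen-decomposition
\[ \W=\V_{\X}\bigl((1-\varepsilon)\Lambda_{\X}+(\varepsilon/n)\I_{r^*}\bigr)\V_{\X}^{\top}+(\varepsilon/n)\V_{\X,\perp}\V_{\X,\perp}^{\top} \]
yields by functional calculus
\[ \log\W=\V_{\X}\log\bigl((1-\varepsilon)\Lambda_{\X}+(\varepsilon/n)\I_{r^*}\bigr)\V_{\X}^{\top}+\log(\varepsilon/n)\,\V_{\X,\perp}\V_{\X,\perp}^{\top}. \]
I then introduce the scalar $\alpha:=\trace(\X^*(\I-\V_{\X}\V_{\X}^{\top}))\in[0,1]$, so that $\trace(\V_{\X}^{\top}\X^*\V_{\X})=1-\alpha$. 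Since $\varepsilon/n>0$, operator monotonicity of $\log$ gives $\log((1-\varepsilon)\Lambda_{\X}+(\varepsilon/n)\I_{r^*})\succeq\log(\Lambda_{\X})+\log(1-\varepsilon)\I_{r^*}$; tracing against the PSD matrix $\V_{\X}^{\top}\X^*\V_{\X}$ and combining with the explicit form of $\log\W$ gives
\[ \trace(\X^*\log\W)\geq\trace(\X^*\V_{\X}\log(\Lambda_{\X})\V_{\X}^{\top})+(1-\alpha)\log(1-\varepsilon)+\alpha\log(\varepsilon/n). \]
Substituting back into $\breg_{\X}(\X^*,\W)$ and using the elementary inequality $-\log(1-\varepsilon)\leq 4\varepsilon$ valid for $\varepsilon\in(0,3/4]$ (since $1/(1-\varepsilon)\leq 4$ on that range) together with $1-\alpha\leq 1$, I obtain
\[ \breg_{\X}(\X^*,\W)\leq\trace(\X^*\log\X^*-\X^*\V_{\X}\log(\Lambda_{\X})\V_{\X}^{\top})+\alpha\log(n/\varepsilon)+4\varepsilon. \]

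The remaining task, which I regard as the main obstacle, is to upper bound $\alpha$ by $\frac{2\lambda_1(\X^*)}{\lambda_{r^*}(\X^*)^2}\Vert\X^*-\X\Vert_F^2$ via a Davis--Kahan style argument. Writing $\X^*=\V_{\X^*}\Lambda_{\X^*}\V_{\X^*}^{\top}$ in compact form and using that $\I-\V_{\X}\V_{\X}^{\top}$ is an orthogonal projection, I get $\alpha=\Vert\V_{\X,\perp}^{\top}\V_{\X^*}\Lambda_{\X^*}^{1/2}\Vert_F^2\leq\lambda_1(\X^*)\Vert\V_{\X,\perp}^{\top}\V_{\X^*}\Vert_F^2$. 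Since $\V_{\X,\perp}^{\top}\X=\mathbf{0}$ and $\V_{\X,\perp}^{\top}\X^*\V_{\X^*}=\V_{\X,\perp}^{\top}\V_{\X^*}\Lambda_{\X^*}$, we have the key identity $\V_{\X,\perp}^{\top}\V_{\X^*}\Lambda_{\X^*}=\V_{\X,\perp}^{\top}(\X^*-\X)\V_{\X^*}$; taking Frobenius norms, exploiting that $\V_{\X,\perp}$ and $\V_{\X^*}$ have orthonormal columns, and then dividing by $\lambda_{r^*}(\X^*)>0$ produces $\Vert\V_{\X,\perp}^{\top}\V_{\X^*}\Vert_F\leq\Vert\X^*-\X\Vert_F/\lambda_{r^*}(\X^*)$, hence $\alpha\leq\lambda_1(\X^*)\Vert\X^*-\X\Vert_F^2/\lambda_{r^*}(\X^*)^2$. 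This is in fact tighter by a factor of $2$ than what is assumed in the hypothesis, providing slack. Plugging this bound on $\alpha$ into the display of the previous paragraph and invoking the hypothesis \eqref{eq:init:equalRank} yields $\breg_{\X}(\X^*,\W)\leq R^2$, completing the proof.
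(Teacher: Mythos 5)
Your proof is correct, and it is worth noting that the paper itself gives no argument for this lemma --- it simply defers to Lemma 4 of \cite{garber2020efficient}. Your write-up is therefore a self-contained substitute, and its skeleton matches what one expects from that reference: split $\log\W$ along $\textrm{range}(\X)$ and its complement, lower-bound the aligned part of $\trace(\X^*\log\W)$ via monotonicity of $\log$ (here applied to diagonal matrices, so the ``operator monotonicity'' invocation is overkill but harmless), absorb $-(1-\alpha)\log(1-\varepsilon)\le\varepsilon/(1-\varepsilon)\le 4\varepsilon$ on $(0,3/4]$, and control the leaked mass $\alpha=\trace(\X^*(\I-\V_{\X}\V_{\X}^{\top}))$ by a subspace-perturbation bound. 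Where you genuinely deviate is in that last step: instead of the Davis--Kahan $\sin\theta$ theorem (which is how the factor $2/\lambda_{r^*}(\X^*)$ arises in the cited proof and elsewhere in this paper, e.g.\ in the proof of Lemma \ref{lemma:series_error}), you use the elementary identity $\V_{\X,\perp}^{\top}\V_{\X^*}\Lambda_{\X^*}=\V_{\X,\perp}^{\top}(\X^*-\X)\V_{\X^*}$, which exploits $\V_{\X,\perp}^{\top}\X=\mathbf{0}$ and yields $\alpha\le\lambda_1(\X^*)\Vert\X^*-\X\Vert_F^2/\lambda_{r^*}(\X^*)^2$, a factor of $2$ better than the hypothesis requires. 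This buys a cleaner, fully elementary argument (no external eigenvector-perturbation theorem) and strictly more slack in \eqref{eq:init:equalRank}; the Davis--Kahan route is the more standard and more modular one, but here your direct identity is both shorter and sharper. All intermediate steps check out ($\alpha\in[0,1]$, the PSD trace pairing, the norm inequalities with orthonormal factors, and $\log(n/\varepsilon)>0$), so the conclusion follows from the hypothesis as claimed.
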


In order to prove Theorem \ref{thm:allPutTogetherVonNeumann}, we first need to prove the following lemma which is the central piece of the analysis. The lemma, which is a certain analog of Lemma \ref{lemma:radiusOfLowRankProjections} from the Euclidean case, establishes that, when close enough to a low-rank saddle-point of Problem \eqref{problem1} and under Assumption \ref{ass:strictcompSaddlePoint}, the errors originating from approximating the matrix exponentiated gradient steps are sufficiently bounded. 

\begin{lemma} \label{lemma:series_error}
Let $(\X^*,\y^*)\in\mX^*\times\mY^*$ be an optimal solution for which Assumption \ref{ass:strictcompSaddlePoint} holds with parameters $r^*$ and $\delta>0$. Let $\rank(\X^*):=r^*$ and let $n\not= r\ge r^*$ be the SVD rank parameter in Algorithm \ref{alg:LRMP}. Fix some iteration $t\ge1$ of Algorithm \ref{alg:LRMP}. Let $\X_t\in\Sn\cap\mathbb{S}^n_{++}$ such that $\lambda_{r^*+1}(\X_t)\le\frac{\varepsilon_{t-1}}{n-r}$ and denote $G=\sup_{(\X,\y)\in{\Sn\times\mathcal{K}}}\Vert\nabla_{\X}f(\X,\y)\Vert_2$. If
\begin{align*}
& \sqrt{\breg_{\X}(\X^*,\X_t) +\breg_{\y}(\y^*,\y_t)} 
\\ & \le \frac{1}{\sqrt{2}}\left(2\max\lbrace\beta_{X},\beta_{Xy}\rbrace\sqrt{\frac{1}{\gamma_t}+8}+\left(1+\frac{2\sqrt{2r^*}}{\lambda_{r^*}(\X^*)}\right)G\right)^{-1}
\\ & \ \ \ \left(\delta -\frac{1}{\eta_t}\log\left(\frac{\varepsilon_{t-1}}{\varepsilon_t}\right) +\frac{2\varepsilon_t}{\eta_t} -4\max\lbrace\beta_{X},\beta_{Xy}\rbrace\left(\sqrt{\frac{1}{\gamma_t}+8}\sqrt{\varepsilon_t}+\sqrt{\frac{\eta_tG}{\gamma_t}}\sqrt[4]{\varepsilon_t}\right)\right),
\end{align*}
where $\gamma_t = \min\left\lbrace 1-4\eta_t^2(\beta_{X}^2+\beta_{yX}^2),1-2\eta_t^2(\beta_{y}^2+\beta_{Xy}^2)\right\rbrace$,
then for $\widehat{\Z}_{t+1}$ and $\widehat{\X}_{t+1}$ as defined in Algorithm \ref{alg:LRMP}, it holds for any $\X\in\Sn$ that 
\begin{align} \label{ineq:toBound}
\max\left\lbrace\breg_{\X}(\X,\X_{t+1})-\breg_{\X}(\X,\widehat{\X}_{t+1}),\breg_{\X}(\widehat{\Z}_{t+1},\Z_{t+1})\right\rbrace \le 2\varepsilon_t.
\end{align}
\end{lemma}
The proof of the lemma leverages the specific spectral structure of the bregman distance corresponding to the von Neumann entropy over the spectrahedron and applies perturbation bounds for the eigenvalues in order to relate the eigenvalues of the approximated points to those of the saddle-point which satisfies Assumption  \ref{ass:strictcompSaddlePoint}, in order to bound the error terms due to the low-rank approximations.
\begin{proof}
Our goal is to bound the LHS of \eqref{ineq:toBound} under the conditions stated in Lemma \ref{lemma:series_error}.
For the first term in the LHS of \eqref{ineq:toBound}, by invoking the von Neumann inequality, for all $\X\in\Sn$ it holds that
\begin{align} \label{ineq:firstTermBound}
& \breg_{\X}(\X,\X_{t+1})-\breg_{\X}(\X,\widehat{\X}_{t+1}) \nonumber
\\ & = \trace(\X\log(\X)-\X\log(\X_{t+1}))-\trace(\X\log(\X)-\X\log(\widehat{\X}_{t+1})) \nonumber
\\ & = \trace(\X(\log(\widehat{\X}_{t+1})-\log(\X_{t+1}))) \nonumber
\\ & \le \sum_{i=1}^n \lambda_i(\X)\lambda_i(\log(\widehat{\X}_{t+1})-\log(\X_{t+1})) \nonumber
\\ & \le \lambda_1(\log(\widehat{\X}_{t+1})-\log(\X_{t+1})).
\end{align}

Similarly, invoking the von Neumann inequality and using the fact that $\widehat{\Z}_{t+1}\in\Sn$, for the second term in the LHS of \eqref{ineq:toBound} it holds that 
\begin{align} \label{ineq:secondTermBound}
\breg_{\X}(\widehat{\Z}_{t+1},\Z_{t+1}) 
& = \trace(\widehat{\Z}_{t+1}\log(\widehat{\Z}_{t+1})-\widehat{\Z}_{t+1}\log(\Z_{t+1})) \nonumber
\\ & \le \sum_{i=1}^n \lambda_i(\widehat{\Z}_{t+1})\lambda_i(\log(\widehat{\Z}_{t+1})-\log(\Z_{t+1})) \nonumber
\\ & \le \lambda_1(\log(\widehat{\Z}_{t+1})-\log(\Z_{t+1})).
\end{align}

Thus, combining \eqref{ineq:firstTermBound} and \eqref{ineq:secondTermBound} our goal is to bound
\begin{align}  \label{ineq:newTermToBound}
\max\left\lbrace\lambda_1(\log(\widehat{\X}_{t+1})-\log(\X_{t+1})),\lambda_1(\log(\widehat{\Z}_{t+1})-\log(\Z_{t+1}))\right\rbrace,
\end{align}
by bounding the top eigenvalues of the matrices $\log(\widehat{\X}_{t+1})-\log(\X_{t+1})$ and $ \log(\widehat{\Z}_{t+1})-\log(\Z_{t+1})$.

Let $\P_t,\B_t,\P^{r}_t,\B^{r}_t$ be as defined in Algorithm \ref{alg:LRMP}. Let $\P_t = \V_{\P}\bLambda_{\P}\V_{\P}^{\top}$ and $\B_t = \V_{\B}\bLambda_{\B}\V_{\B}^{\top}$ denote the eigen-decompositions of $\P_t$ and $\B_t$ and let $\P^{r}_t = \V^{r}_{\P}\bLambda^{r}_{\P}{\V^{r}_{\P}}^{\top}$ and $\B^{r}_t = \V^{r}_{\B}\bLambda^{r}_{\B}{\V^{r}_{\B}}^{\top}$ denote their rank-$r$ decompositions respectively. $\Z_{t+1}$ and $\widehat{\Z}_{t+1}$ have the same eigenvectors and also $\X_{t+1}$ and $\widehat{\X}_{t+1}$ have the same eigenvectors. 
Therefore, plugging these into the terms of \eqref{ineq:newTermToBound}, we have that
\begin{align} 
\log(\widehat{\Z}_{t+1})-\log(\Z_{t+1})&=\V_{\P}\left[\log\left(\frac{\bLambda_{\P}}{\trace(\P_t)}\right)-\log\left(\begin{array}{cc}(1-\varepsilon_t)\frac{\bLambda_{\P}^{r}}{\trace(\P^{r}_t)} & 0 \\ 0 & \frac{\varepsilon_t}{n-r}\I \end{array}\right)\right]\V_{\P}^{\top}, \label{eq:ztEigs}
\\ \log(\widehat{\X}_{t+1})-\log(\X_{t+1})&=\V_{\B}\left[\log\left(\frac{\bLambda_{\B}}{\trace(\B_t)}\right)-\log\left(\begin{array}{cc}(1-\varepsilon_t)\frac{\bLambda_{\B}^{r}}{\trace(\B^{r}_t)} & 0 \\ 0 & \frac{\varepsilon_t}{n-r}\I \end{array}\right)\right]\V_{\B}^{\top}. \label{eq:xtEigs}
\end{align}
Denote the matrices of eigenvalues in \eqref{eq:ztEigs} and \eqref{eq:xtEigs} as
\begin{align*}
\D^{\P}_t&:=\log\left(\frac{\bLambda_{\P}}{\trace(\P_t)}\right)-\log\left(\begin{array}{cc}(1-\varepsilon_t)\frac{\bLambda_{\P}^{r}}{\trace(\P^{r}_t)} & 0 \\ 0 & \frac{\varepsilon_t}{n-r}\I \end{array}\right), %\label{eq:DtP}
\\ \D^{\B}_t&:=\log\left(\frac{\bLambda_{\B}}{\trace(\B_t)}\right)-\log\left(\begin{array}{cc}(1-\varepsilon_t)\frac{\bLambda_{\B}^{r}}{\trace(\B^{r}_t)} & 0 \\ 0 & \frac{\varepsilon_t}{n-r}\I \end{array}\right). %\label{eq:DtB}
\end{align*}
The eigenvalues in the diagonals of $\D^{\P}_t$ and $\D^{\B}_t$ satisfy that
\begin{align}
\forall j\in[r] &:\ \lambda_j(\D^{\P}_t) = \log\left(\frac{\frac{\lambda_j(\P_t)}{\trace(\P_t)}}{(1-\varepsilon_t)\frac{\lambda_j(\P_t)}{\trace(\P_t^{r})}}\right),\ \ \lambda_j(\D^{\B}_t) = \log\left(\frac{\frac{\lambda_j(\B_t)}{\trace(\B_t)}}{(1-\varepsilon_t)\frac{\lambda_j(\B_t)}{\trace(\B_t^{r})}}\right)  ; \label{BigEig} 
\\ \forall j>r & :\ \lambda_j(\D^{\P}_t) = \log\left(\frac{\frac{\lambda_j(\P_t)}{\trace(\P_t)}}{\frac{\varepsilon_t}{n-r}}\right),\ \ \lambda_j(\D^{\B}_t) = \log\left(\frac{\frac{\lambda_j(\B_t)}{\trace(\B_t)}}{\frac{\varepsilon_t}{n-r}}\right). \label{LittleEig}
\end{align}
It is important to note that $\lambda_1(\D^{\P}_t),\ldots,\lambda_n(\D^{\P}_t)$ and $\lambda_1(\D^{\B}_t),\ldots,\lambda_n(\D^{\B}_t)$ are not necessarily ordered in a non-increasing order. Thus, we must bound the top eigenvalues of both \eqref{BigEig} and \eqref{LittleEig} to obtain a bound on \eqref{ineq:newTermToBound}.

\paragraph*{Bounding the first r eigenvalues of $\D^{\P}_t$ and $\D^{\B}_t$}
From \eqref{BigEig}, for all $j\le r$ it holds that
\begin{align} \label{ineq:bigEigsBoundP} 
\lambda_j(\D^{\P}_t) & = \log\left(\frac{\lambda_j(\P_t)\sum_{i=1}^{r}\lambda_i(\P_t)}{(1-\varepsilon_t)\lambda_j(\P_t)\sum_{i=1}^n\lambda_i(\P_t)}\right) \nonumber
\\ & = \log\left(\frac{\sum_{i=1}^{r}\lambda_i(\P_t)}{\sum_{i=1}^n\lambda_i(\P_t)}\right)+ \log\left(\frac{1}{1-\varepsilon_t}\right)
\le \log\left(\frac{1}{1-\varepsilon_t}\right) \underset{(a)}{\le} 2\varepsilon_t,
\end{align}
where (a) holds for any $\varepsilon_t\le\frac{3}{4}$. Using the same arguments, for all $j\le r$ and any any $\varepsilon_t\le\frac{3}{4}$ also 
\begin{align*}
\lambda_j(\D^{\B}_t) & \le 2\varepsilon_t.
\end{align*}

\paragraph*{Bounding the rest of the eigenvalues of $\D^{\P}_t$ and $\D^{\B}_t$} 
For $j\ge r+1$, note that using the definition of the eigenvalues of $\D^{\P}_t$ and $\D^{\B}_t$ in \eqref{LittleEig}, it holds that 
\begin{align} 
\lambda_j(\D^{\P}_t) & = \log\left(\frac{(n-r)\lambda_j(\P_t)}{\varepsilon_t \sum_{i=1}^n \lambda_i(\P_t)}\right)
\le \log\left(\frac{(n-r)\lambda_{r+1}(\P_t)}{\varepsilon_t \sum_{i=1}^n \lambda_i(\P_t)}\right) = \lambda_{r+1}(\D^{\P}_t), \label{ineq:toUseCertificateConvergenceP}
\\ \lambda_j(\D^{\B}_t) & = \log\left(\frac{(n-r)\lambda_j(\B_t)}{\varepsilon_t \sum_{i=1}^n \lambda_i(\B_t)}\right)
\le \log\left(\frac{(n-r)\lambda_{r+1}(\B_t)}{\varepsilon_t \sum_{i=1}^n \lambda_i(\B_t)}\right) = \lambda_{r+1}(\D^{\B}_t). \label{ineq:toUseCertificateConvergenceB}
\end{align}

It holds that
\begin{align} 
\log\left(\frac{(n-r)\lambda_{r+1}(\P_t)}{\varepsilon_t \sum_{i=1}^n \lambda_i(\P_t)}\right) & = \log\left(\frac{n-r}{\varepsilon_t}\right) + \log(\lambda_{r+1}(\P_t))-\log\left( \sum_{i=1}^n \lambda_i(\P_t)\right), \label{ineq:LittleEigBoundP}
\\ \log\left(\frac{(n-r)\lambda_{r+1}(\B_t)}{\varepsilon_t \sum_{i=1}^n \lambda_i(\B_t)}\right) & = \log\left(\frac{n-r}{\varepsilon_t}\right) + \log(\lambda_{r+1}(\B_t))-\log\left( \sum_{i=1}^n \lambda_i(\B_t)\right). \label{ineq:LittleEigBoundB}
\end{align}
We will now separately bound the last two terms in the RHS of \eqref{ineq:LittleEigBoundP} and the RHS of \eqref{ineq:LittleEigBoundB}.

\paragraph*{Bounds for the middle terms in the RHS of \eqref{ineq:LittleEigBoundP} and \eqref{ineq:LittleEigBoundB}}
\begin{align}  \label{ineq:InProofLambdaRPlus1P}
& \log(\lambda_{r+1}(\P_t)) \le \log(\lambda_{r^*+1}(\P_t)) = \lambda_{r^*+1}(\log(\P_t)) \nonumber
\\ & \underset{(a)}{\le} \lambda_{r^*+1}(\log(\X_t)-\eta\nabla_{\X} f(\X^*,\y^*)) + \eta_t\lambda_1(\nabla_{\X} f(\X^*,\y^*)-\nabla_{\X}f(\X_{t},\y_{t})) \nonumber
\\ & \le \sum_{i=1}^{r^*+1}\lambda_{i}(\log(\X_t)-\eta_t\nabla_{\X}f(\X^*,\y^*))-\sum_{i=1}^{r^*}\lambda_{i}(\log(\X_t)-\eta_t\nabla_{\X}f(\X^*,\y^*))\nonumber \\ & \ \ \  +\eta_t\Vert\nabla_{\X} f(\X^*,\y^*)-\nabla_{\X}f(\X_{t},\y_{t})\Vert_2 \nonumber
\\ & \underset{(b)}{\le} \sum_{i=1}^{r^*+1}\lambda_{i}(\log(\X_t))-\eta\sum_{i=1}^{r^*+1}\lambda_{n-i+1}(\nabla_{\X}f(\X^*,\y^*))-\sum_{i=1}^{r^*}\lambda_{i}(\log(\X)-\eta_t\nabla_{\X}f(\X^*,\y^*)) \nonumber \\ & \ \ \ +\eta_t\Vert\nabla_{\X} f(\X^*,\y^*)-\nabla_{\X}f(\X_{t},\y_{t})\Vert_2 \nonumber
\\ & \underset{(c)}{=} \sum_{i=1}^{r^*}\lambda_{i}(\log(\X_t))+\log\left(\lambda_{r^*+1}(\X_t)\right)-\eta r^*\lambda_n(\nabla_{\X}f(\X^*,\y^*))-\eta_t\lambda_{n-r^*}(\nabla_{\X}f(\X^*,\y^*))\nonumber \\ & \ \ \ -\sum_{i=1}^{r^*}\lambda_{i}(\log(\X_t)-\eta_t\nabla_{\X}f(\X^*,\y^*))+\eta_t\Vert\nabla_{\X} f(\X^*,\y^*)-\nabla_{\X}f(\X_{t},\y_{t})\Vert_2,
\end{align}
where (a) follows from Weyl's inequality, (b) follows from Ky Fan's inequality for eigenvalues and (c) follows from Lemma \ref{lemma:Xopt-subgradEigen}.

Using the same arguments and replacing $\nabla_{\X}f(\X_{t},\y_{t})$ with $\nabla_{\X}f(\Z_{t+1},\w_{t+1})$ we obtain 
\begin{align}  \label{ineq:InProofLambdaRPlus1B}
& \log(\lambda_{r+1}(\B_t)) \le \log(\lambda_{r^*+1}(\B_t)) \nonumber
\\ & \le \sum_{i=1}^{r^*}\lambda_{i}(\log(\X_t))+\log\left(\lambda_{r^*+1}(\X_t)\right)-\eta r^*\lambda_n(\nabla_{\X}f(\X^*,\y^*))-\eta_t\lambda_{n-r^*}(\nabla_{\X}f(\X^*,\y^*))\nonumber \\ & \ \ \ -\sum_{i=1}^{r^*}\lambda_{i}(\log(\X_t)-\eta_t\nabla_{\X}f(\X^*,\y^*))+\eta_t\Vert\nabla_{\X} f(\X^*,\y^*)-\nabla_{\X}f(\Z_{t+1},\w_{t+1})\Vert_2.
\end{align}

Denote $\log(\X_t)=\W\bLambda_{\X}\W^{\top}$ and $\W_{r^*}$ to be the matrix with the $r^*$ first columns of $\W$.
\begin{align} \label{ineq:inProofSumReigs}
-\sum_{i=1}^{r^*}\lambda_{i}(\log(\X_t)-\eta_t\nabla_{\X}f(\X^*,\y^*)) & \le -\W_{r^*}\W_{r^*}^{\top}\bullet(\log(\X_t)-\eta_t\nabla_{\X}f(\X^*,\y^*)) \nonumber
\\ & = -\sum_{i=1}^{r^*}\lambda_i(\log(\X_t))+\eta_t\W_{r^*}\W_{r^*}^{\top}\bullet\nabla_{\X}f(\X^*,\y^*).
\end{align}

Let $\X^*=\V^*\bLambda^*{\V^*}^{\top}$ denote the eigen-decomposition of $\X^*$. By Lemma \ref{lemma:connectionSubgradientNonSmoothAndSaddlePoint}, $\nabla_{\X}f(\X^*,\y^*)$ is a subgradient of the corresponding nonsmooth objective $g(\X)=\max_{\y\in\mathcal{K}}f(\X,\y)$ at $\X^*$ which also satisfies the first-order optimality condition. Therefore, invoking Lemma \ref{lemma:Xopt-subgradEigen} with this sugradient we get that $\V^*{\V^*}^{\top}\bullet\nabla_{\X}f(\X^*,\y^*)=r^*\lambda_{n}(\nabla_{\X}f(\X^*,\y^*))$. Therefore,
\begin{align} \label{ineq:inProofInnerProd}
& \W_{r^*}\W_{r^*}^{\top}\bullet\nabla_{\X}f(\X^*,\y^*) \nonumber
\\ & = \V^*{\V^*}^{\top}\bullet\nabla_{\X}f(\X^*,\y^*)+\W_{r^*}\W_{r^*}^{\top}\bullet\nabla_{\X}f(\X^*,\y^*)-\V^*{\V^*}^{\top}\bullet\nabla_{\X}f(\X^*,\y^*) \nonumber
\\ & \le \V^*{\V^*}^{\top}\bullet\nabla_{\X}f(\X^*,\y^*)+\Vert\W_{r^*}\W_{r^*}^{\top}-\V^*{\V^*}^{\top}\Vert_{*}\Vert\nabla_{\X}f(\X^*,\y^*)\Vert_{2} \nonumber
\\ & = r^*\lambda_{n}(\nabla_{\X}f(\X^*,\y^*))+\Vert\W_{r^*}\W_{r^*}^{\top}-\V^*{\V^*}^{\top}\Vert_{*}\Vert\nabla_{\X}f(\X^*,\y^*)\Vert_{2} \nonumber
\\ & \le r^*\lambda_{n}(\nabla_{\X}f(\X^*,\y^*))+\sqrt{2r^*}\Vert\W_{r^*}\W_{r^*}^{\top}-\V^*{\V^*}^{\top}\Vert_{F}\Vert\nabla_{\X}f(\X^*,\y^*)\Vert_{2} \nonumber
\\ & \underset{(a)}{\le} r^*\lambda_{n}(\nabla_{\X}f(\X^*,\y^*))+\frac{2\sqrt{2r^*}\Vert\X_t-\X^*\Vert_{F}}{\lambda_{r^*}(\X^*)}\Vert\nabla_{\X}f(\X^*,\y^*)\Vert_{2} \nonumber
\\ & \le r^*\lambda_{n}(\nabla_{\X}f(\X^*,\y^*))+\frac{2\sqrt{2r^*}\Vert\X_t-\X^*\Vert_{*}}{\lambda_{r^*}(\X^*)}\Vert\nabla_{\X}f(\X^*,\y^*)\Vert_{2}.
\end{align}
Here, (a) follows using the Davis-Kahan $\sin(\theta)$ theorem (see for instance Theorem 2 in \cite{Davis-Kahan}).

From the smoothness of $f$ it holds that
\begin{align} \label{ineq:smoothnessXt}
& \Vert\nabla_{\X} f(\X^*,\y^*)-\nabla_{\X}f(\X_{t},\y_{t})\Vert_2 \nonumber
\\ & = \Vert\nabla_{\X} f(\X^*,\y^*)-\nabla_{\X}f(\X_{t},\y^*)\Vert_2 + \Vert\nabla_{\X} f(\X_{t},\y^*)-\nabla_{\X}f(\X_{t},\y_{t})\Vert_2 \nonumber
\\ & \le \beta_{X}\Vert\X_{t}-\X^*\Vert_* + \beta_{Xy}\Vert\y_{t}-\y^*\Vert_{\mathcal{Y}},
\end{align}
and similarly,
\begin{align} \label{ineq:smoothnessZt}
& \Vert\nabla_{\X} f(\X^*,\y^*)-\nabla_{\X}f(\Z_{t+1},\w_{t+1})\Vert_2 \nonumber
\\ & = \Vert\nabla_{\X} f(\X^*,\y^*)-\nabla_{\X}f(\Z_{t+1},\y^*)\Vert_2 + \Vert\nabla_{\X} f(\Z_{t+1},\y^*)-\nabla_{\X}f(\Z_{t+1},\w_{t+1})\Vert_2 \nonumber
\\ & \le \beta_{X}\Vert\Z_{t+1}-\X^*\Vert_* + \beta_{Xy}\Vert\w_{t+1}-\y^*\Vert_{\mathcal{Y}},
\end{align}

Plugging the bounds \eqref{ineq:inProofSumReigs}, \eqref{ineq:inProofInnerProd}, and \eqref{ineq:smoothnessXt} into \eqref{ineq:InProofLambdaRPlus1P} and using the assumption that $\lambda_{r^*+1}(\X_t)\le\varepsilon_{t-1}/(n-r)$, we obtain
\begin{align} \label{ineq:InProofSecondTermP}
\log(\lambda_{r+1}(\P_t)) & \le 
\log\left(\frac{\varepsilon_{t-1}}{n-r}\right)-\eta \lambda_{n-r^*}(\nabla_{\X}f(\X^*,\y^*))+ \eta_t\beta_{Xy}\Vert\y_{t}-\y^*\Vert_{\mathcal{Y}} \nonumber
\\ & \ \ \ +\eta_t\beta_{X}\Vert\X_{t}-\X^*\Vert_* +\eta\frac{2\sqrt{2r^*}\Vert\X_t-\X^*\Vert_{*}}{\lambda_{r^*}(\X^*)}\Vert\nabla_{\X}f(\X^*,\y^*)\Vert_{2}. 
\end{align}
Similarly, plugging the bounds \eqref{ineq:inProofSumReigs}, \eqref{ineq:inProofInnerProd}, and \eqref{ineq:smoothnessZt} into \eqref{ineq:InProofLambdaRPlus1B}, we obtain
\begin{align} \label{ineq:InProofSecondTermB}
\log(\lambda_{r+1}(\B_t)) & \le \log\left(\frac{\varepsilon_{t-1}}{n-r}\right)-\eta \lambda_{n-r^*}(\nabla_{\X}f(\X^*,\y^*))+ \eta_t\beta_{Xy}\Vert\w_{t+1}-\y^*\Vert_{\mathcal{Y}} \nonumber
\\ & \ \ \ +\eta_t\beta_{X}\Vert\Z_{t+1}-\X^*\Vert_* +\eta\frac{2\sqrt{2r^*}\Vert\X_t-\X^*\Vert_{*}}{\lambda_{r^*}(\X^*)}\Vert\nabla_{\X}f(\X^*,\y^*)\Vert_{2}. 
\end{align}

\paragraph*{Bounds for the last terms in the RHS of \eqref{ineq:LittleEigBoundP} and \eqref{ineq:LittleEigBoundB}}
For the last term of \eqref{ineq:LittleEigBoundP}, 
\begin{align} \label{ineq:InProofThirdTermP} 
& -\log\left(\sum_{i=1}^n\lambda_i(\P_t)\right) \\ & = -\log\Big(\sum_{i=1}^n\lambda_i\left(\exp\left(\log(\X_t)-\eta_t\nabla_{\X}f(\X_{t},\y_{t})\right)\right)\Big) \nonumber
\\ & \underset{(a)}{\le} -\sum_{i=1}^n\lambda_i(\X_t)\cdot\log\left(\frac{\lambda_i\left(\exp\left(\log(\X_t)-\eta_t\nabla_{\X}f(\X_{t},\y_{t})\right)\right)}{\lambda_i(\X_t)}\right) \nonumber
\\ & = -\sum_{i=1}^n\lambda_i(\X_t)\cdot\left[\log\left(\lambda_i\left(\exp\left(\log(\X_t)-\eta_t\nabla_{\X}f(\X_{t},\y_{t})\right)\right)\right)-\log(\lambda_i(\X_t)\right] \nonumber
\\ & = -\sum_{i=1}^n\lambda_i(\X_t)\cdot\left[\lambda_i\left(\log(\X_t)-\eta_t\nabla_{\X}f(\X_{t},\y_{t})\right)-\log(\lambda_i(\X_t)\right] \nonumber
\\ & \underset{(b)}{\le} -\langle\X_t,\log(\X_t)-\eta_t\nabla_{\X}f(\X_{t},\y_{t})\rangle +\sum_{i=1}^n\lambda_i(\X_t)\lambda_i(\log(\X_t)) \nonumber
\\ & \underset{(c)}{=} -\langle\X_t,\log(\X_t)-\eta_t\nabla_{\X}f(\X_{t},\y_{t})\rangle +\sum_{i=1}^n\lambda_i(\X_t\log(\X_t)) \nonumber
\\ & = -\trace(\X_t\log(\X_t))+\trace(\X_t\log(\X_t))+\eta_t\langle\X_t,\nabla_{\X}f(\X_{t},\y_{t})\rangle \nonumber
\\ & = \eta_t\langle\X^*,\nabla_{\X}f(\X^*,\y^*)\rangle + \eta_t\langle\X_t-\X^*,\nabla_{\X}f(\X^*,\y^*)\rangle \nonumber \\ & \ \ \ +\eta_t\langle\X_t,\nabla_{\X}f(\X_{t},\y_{t})-\nabla_{\X}f(\X^*,\y^*)\rangle \nonumber
\\ & \underset{(d)}{=} \eta_t\lambda_{n}(\nabla_{\X}f(\X^*,\y^*)) + \eta_t\langle\X_t-\X^*,\nabla_{\X}f(\X^*,\y^*)\rangle \nonumber \\ & \ \ \ +\eta_t\langle\X_t,\nabla_{\X}f(\X_{t},\y_{t})-\nabla_{\X}f(\X^*,\y^*)\rangle \nonumber
\\ & \le \eta_t\lambda_{n}(\nabla_{\X}f(\X^*,\y^*)) + \eta_t\Vert\X_t-\X^*\Vert_*\Vert\nabla_{\X}f(\X^*,\y^*)\Vert_2 \nonumber \\ & \ \ \ +\eta_t\Vert\X_t\Vert_*\Vert\nabla_{\X}f(\X_{t},\y_{t})-\nabla_{\X}f(\X^*,\y^*)\Vert_2 \nonumber
\\ & \underset{(e)}{\le} \eta_t\lambda_{n}(\nabla_{\X}f(\X^*,\y^*)) + \eta_t\Vert\X_t-\X^*\Vert_*\Vert\nabla_{\X}f(\X^*,\y^*)\Vert_2 +\eta_t\beta_{X}\Vert\X_{t}-\X^*\Vert_* \nonumber \\ & \ \ \ + \eta_t\beta_{Xy}\Vert\y_{t}-\y^*\Vert_{\mathcal{Y}},
\end{align}
where (a) follows since $-\log(\cdot)$ is convex, (b) follows from the von Neumann inequality, (c) follows since $\X_t$ and $\log(\X_t)$ have the same eigenvectors, (d) follows from Lemma \ref{lemma:Xopt-subgradEigen}, and (e) follows from \eqref{ineq:smoothnessXt} and since $\Vert\X_t\Vert_*=\sum_{i=1}^n\lambda_i(\X_t)=\trace(\X_t)=1$.

For the last term in the RHS of \eqref{ineq:LittleEigBoundB}, replacing $\nabla_{\X}f(\X_{t},\y_{t})$ with $\nabla_{\X}f(\Z_{t+1},\w_{t+1})$ and replacing \eqref{ineq:smoothnessXt} with \eqref{ineq:smoothnessZt}, using the same arguments as in \eqref{ineq:InProofThirdTermP} we obtain  
\begin{align} \label{ineq:InProofThirdTermB} 
-\log\left(\sum_{i=1}^n\lambda_i(\B_t)\right) & 
\le \eta_t\lambda_{n}(\nabla_{\X}f(\X^*,\y^*)) + \eta_t\Vert\X_t-\X^*\Vert_*\Vert\nabla_{\X}f(\X^*,\y^*)\Vert_2 \nonumber
\\ & \ \ \ +\eta_t\beta_{X}\Vert\Z_{t+1}-\X^*\Vert_* + \eta_t\beta_{Xy}\Vert\w_{t+1}-\y^*\Vert_{\mathcal{Y}}.
\end{align}

\paragraph{Combining all parts of the bounds for the eigenvalues of $\D^{\P}_t$ and $\D^{\B}_t$}
Plugging \eqref{ineq:InProofSecondTermP} and \eqref{ineq:InProofThirdTermP} into the RHS of \eqref{ineq:LittleEigBoundP}, we get
\begin{align*}
& \log\left(\frac{(n-r)\lambda_{r+1}(\P_t)}{\varepsilon_t \sum_{i=1}^n \lambda_i(\P_t)}\right)
\\ & \le -\eta_t\delta +\log\left(\frac{\varepsilon_{t-1}}{\varepsilon_t}\right)+ \left(1+\frac{2\sqrt{2r^*}}{\lambda_{r^*}(\X^*)}\right)\Vert\nabla_{\X}f(\X^*,\y^*)\Vert_{2}\Vert\X_t-\X^*\Vert_*
\\ & \ \ \ +2\eta_t\beta_{X}\Vert\X_{t}-\X^*\Vert_*+ 2\eta_t\beta_{Xy}\Vert\y_{t}-\y^*\Vert_{\mathcal{Y}}
\\ & \le -\eta_t\delta +\log\left(\frac{\varepsilon_{t-1}}{\varepsilon_t}\right)+ \sqrt{2}\eta_t\left(1+\frac{2\sqrt{2r^*}}{\lambda_{r^*}(\X^*)}\right)G\sqrt{\breg_{\X}(\X^*,\X_t)} 
\\ & \ \ \  + 2\sqrt{2}\eta_t\beta_{X}\sqrt{\breg_{\X}(\X^*,\X_{t})} + 2\sqrt{2}\eta_t\beta_{Xy}\sqrt{\breg_{\y}(\y^*,\y_{t})}
\\ & \le -\eta_t\delta +\log\left(\frac{\varepsilon_{t-1}}{\varepsilon_t}\right)
\\ & \ \ \ + \sqrt{2}\eta_t\max\left\lbrace\left(1+\frac{2\sqrt{2r^*}}{\lambda_{r^*}(\X^*)}\right)G+2\beta_{X},2\beta_{Xy}\right\rbrace\sqrt{\breg_{\X}(\X^*,\X_t)+\breg_{\y}(\y^*,\y_{t})} ,
\end{align*}
where the second inequality follows from \eqref{ineq:strongConvexityOfBregmanDistance} and the definition of $G$.

Therefore, if $(\X_t,\y_t)$ is a point such that
\begin{align} \label{radiusBoundP}
& \sqrt{\breg_{\X}(\X^*,\X_t)+\breg_{\y}(\y^*,\y_{t})}\nonumber
\\ & \le \frac{1}{\sqrt{2}}\left(\max\left\lbrace\left(1+\frac{2\sqrt{2r^*}}{\lambda_{r^*}(\X^*)}\right)G+2\beta_{X},2\beta_{Xy}\right\rbrace\right)^{-1}\left(\delta -\frac{1}{\eta_t}\log\left(\frac{\varepsilon_{t-1}}{\varepsilon_t}\right)+\frac{2\varepsilon_t}{\eta_t}\right)
\end{align}
then
\begin{align*}
\log\left(\frac{(n-r)\lambda_{r+1}(\P_t)}{\varepsilon_t \sum_{i=1}^n \lambda_i(\P_t)}\right) \le 2\varepsilon_t,
\end{align*}
which implies together with \eqref{ineq:bigEigsBoundP} that $\breg_{\X}(\widehat{\Z}_{t+1},\Z_{t+1})\le 2\varepsilon_t$.

Plugging \eqref{ineq:InProofSecondTermB} and \eqref{ineq:InProofThirdTermB} into the RHS of \eqref{ineq:LittleEigBoundB}, we get
\begin{align*}
& \log\left(\frac{(n-r)\lambda_{r+1}(\B_t)}{\varepsilon_t \sum_{i=1}^n \lambda_i(\B_t)}\right)
\\ & \le -\eta_t\delta +\log\left(\frac{\varepsilon_{t-1}}{\varepsilon_t}\right)+ \left(1+\frac{2\sqrt{2r^*}}{\lambda_{r^*}(\X^*)}\right)\Vert\nabla_{\X}f(\X^*,\y^*)\Vert_{2}\Vert\X_t-\X^*\Vert_*
\\ & \ \ \ +2\eta_t\beta_{X}\Vert\Z_{t+1}-\X^*\Vert_*+ 2\eta_t\beta_{Xy}\Vert\w_{t+1}-\y^*\Vert_{\mathcal{Y}}
\\ & \underset{(a)}{\le} -\eta_t\delta +\log\left(\frac{\varepsilon_{t-1}}{\varepsilon_t}\right)+ \sqrt{2}\eta_t\left(1+\frac{2\sqrt{2r^*}}{\lambda_{r^*}(\X^*)}\right)G\sqrt{\breg_{\X}(\X^*,\X_t)} 
\\ & \ \ \  +2\eta_t\beta_{X}\Vert\Z_{t+1}-\X^*\Vert_*+ 2\eta_t\beta_{Xy}\Vert\w_{t+1}-\y^*\Vert_{\mathcal{Y}}
\\ & \underset{(b)}{\le} -\eta_t\delta +\log\left(\frac{\varepsilon_{t-1}}{\varepsilon_t}\right)+ \sqrt{2}\eta_t\left(1+\frac{2\sqrt{2r^*}}{\lambda_{r^*}(\X^*)}\right)G\sqrt{\breg_{\X}(\X^*,\X_t)} 
\\ & \ \ \  +2\sqrt{2}\eta_t\max\lbrace\beta_{X},\beta_{Xy}\rbrace\sqrt{\frac{1}{\gamma_t}+8}\sqrt{\breg_{\X}(\X^*,\X_t) +\breg_{\y}(\y^*,\y_t)}
\\ & \ \ \ +2\sqrt{2}\eta_t\max\lbrace\beta_{X},\beta_{Xy}\rbrace\sqrt{\left(\frac{1}{\gamma_t}+8\right)\breg_{\X}(\widehat{\Z}_{t+1},\Z_{t+1})+ \frac{\sqrt{2}\eta_tG}{\gamma_t}\sqrt{\breg_{\X}(\widehat{\Z}_{t+1},\Z_{t+1})}}
\\ & \le -\eta_t\delta +\log\left(\frac{\varepsilon_{t-1}}{\varepsilon_t}\right)
\\ & \ \ \  +\sqrt{2}\eta_t\hspace{-0.5mm}\left(\hspace{-0.5mm}2\max\lbrace\beta_{X},\beta_{Xy}\rbrace\sqrt{\hspace{-0.5mm}\frac{1}{\gamma_t}\hspace{-0.5mm}+\hspace{-0.5mm}8}\hspace{-0.5mm}+\hspace{-0.5mm}\left(1\hspace{-0.5mm}+\hspace{-0.5mm}\frac{2\sqrt{2r^*}}{\lambda_{r^*}(\X^*)}\hspace{-0.5mm}\right)\hspace{-0.5mm}G\right)\sqrt{\breg_{\X}(\X^*,\X_t) \hspace{-0.5mm}+\hspace{-0.5mm}\breg_{\y}(\y^*,\y_t)}
\\ & \ \ \ +2\sqrt{2}\eta_t\max\lbrace\beta_{X},\beta_{Xy}\rbrace\hspace{-1mm}\left(\hspace{-1mm}\sqrt{\hspace{-0.5mm}\frac{1}{\gamma_t}\hspace{-0.5mm}+\hspace{-0.5mm}8}\sqrt{\breg_{\X}(\widehat{\Z}_{t+1},\Z_{t+1})}\hspace{-0.5mm}+\hspace{-0.5mm}\sqrt{\hspace{-0.5mm}\frac{\sqrt{2}\eta_tG}{\gamma_t}}\sqrt[4]{\breg_{\X}(\widehat{\Z}_{t+1},\Z_{t+1})}\hspace{-0.5mm}\right),
\end{align*}
where the (a) follows from \eqref{ineq:strongConvexityOfBregmanDistance} and the definition of $G$ and (b) follows from Lemma \ref{lemma:decreasingRadius}.
% and $\gamma_t = \min\left\lbrace 1-4\eta_t^2(\beta_{X}^2+\beta_{yX}^2),1-2\eta_t^2(\beta_{y}^2+\beta_{Xy}^2)\right\rbrace$.

Therefore, if $(\X_t,\y_t)$ is a point such that 
\begin{align} \label{radiusBoundB}
& \sqrt{\breg_{\X}(\X^*,\X_t) +\breg_{\y}(\y^*,\y_t)} \nonumber
\\ & \le \frac{1}{\sqrt{2}}\left(2\max\lbrace\beta_{X},\beta_{Xy}\rbrace\sqrt{\frac{1}{\gamma_t}+8}+\left(1+\frac{2\sqrt{2r^*}}{\lambda_{r^*}(\X^*)}\right)G\right)^{-1} \nonumber
\\ & \ \ \ \Bigg(\delta -\frac{1}{\eta_t}\log\left(\frac{\varepsilon_{t-1}}{\varepsilon_t}\right) +\frac{2\varepsilon_t}{\eta_t} \nonumber
\\ & \ \ \ -2\sqrt{2}\max\lbrace\beta_{X},\beta_{Xy}\rbrace\Bigg(\hspace{-1mm}\sqrt{\hspace{-0.5mm}\frac{1}{\gamma_t}\hspace{-0.5mm}+\hspace{-0.5mm}8}\sqrt{\breg_{\X}(\widehat{\Z}_{t+1},\Z_{t+1})}+\sqrt{\hspace{-0.5mm}\frac{\sqrt{2}\eta_tG}{\gamma_t}}\sqrt[4]{\breg_{\X}(\widehat{\Z}_{t+1},\Z_{t+1})}\Bigg)\hspace{-0.5mm}\Bigg),
\end{align}
then
\begin{align*}
\log\left(\frac{(n-r)\lambda_{r+1}(\B_t)}{\varepsilon_t \sum_{i=1}^n \lambda_i(\B_t)}\right)\le 2\varepsilon_t.
\end{align*}

Thus, if $(\X_t,\y_t)$ satisfies both \eqref{radiusBoundP} and \eqref{radiusBoundB}, then taking the minimum between the RHS of \eqref{radiusBoundP} and the RHS of \eqref{radiusBoundB} and bounding $\breg_{\X}(\widehat{\Z}_{t+1},\Z_{t+1})\le 2\varepsilon_t$ gives us the result in the lemma.

\end{proof}

Now we can prove Theorem \ref{thm:allPutTogetherVonNeumann}.
\begin{proof}%[Proof of Theorem \ref{thm:allPutTogetherVonNeumann}]
We first observe that by our choice of the sequence $\{\varepsilon_t\}_{t\geq 0}$ we have that the sequence $\left\{-\frac{1}{\eta}\log\left(\frac{\varepsilon_{t-1}}{\varepsilon_{t}}\right)\right\}_{t\geq 1}$ is monotone non-decreasing and thus, for all $t\geq 1$:
\begin{align*}
 -\frac{1}{\eta}\log\left(\frac{\varepsilon_{t-1}}{\varepsilon_{t}}\right) &\ge -\frac{1}{\eta}\log\left(\frac{\varepsilon_{0}}{\varepsilon_{1}}\right)=-\frac{3}{\eta}\log\left(\frac{c+2}{c+1}\right)
\ge -\frac{3}{\eta}\log\left(1 + \frac{1}{c}\right) \geq -\frac{3}{\eta c} \geq -\frac{\delta}{4},
\end{align*}
where the last inequality follows from plugging our choice for $c$.

In addition, by our choice $\eta_t=\eta$ it holds that
\begin{align*}
\gamma_t=\gamma := \min\left\lbrace 1-4\eta^2\left(\beta_{X}^2+\beta_{yX}^2\right),1-2\eta^2\left(\beta_{y}^2+\beta_{Xy}^2\right)\right\rbrace \ge \frac{1}{2},
\end{align*}
and by our choice of $\varepsilon_t$ it holds that $\varepsilon_t
\le \left(\frac{\delta}{16\sqrt{2}\max\lbrace\beta_{X},\beta_{Xy}\rbrace\left(\sqrt{5}+\sqrt{\eta G}\right)}\right)^4\frac{1}{(t+1+c)^3}\le \left(\frac{\delta}{16\sqrt{2}\max\lbrace\beta_{X},\beta_{Xy}\rbrace\left(\sqrt{5}+\sqrt{\eta G}\right)}\right)^4$ and $\varepsilon_t\le1$. Therefore, it holds that
\begin{align*}
& -4\max\lbrace\beta_{X},\beta_{Xy}\rbrace\left(\sqrt{\frac{1}{\gamma}+8}\sqrt{\varepsilon_t}+\sqrt{\frac{\eta G}{\gamma}}\sqrt[4]{\varepsilon_t}\right) 
\\ & \ge -4\sqrt{2}\max\lbrace\beta_{X},\beta_{Xy}\rbrace\left(\sqrt{5}\sqrt{\varepsilon_t}+\sqrt{\eta G}\sqrt[4]{\varepsilon_t}\right)
\\ & \ge -4\sqrt{2}\max\lbrace\beta_{X},\beta_{Xy}\rbrace\left(\sqrt{5}+\sqrt{\eta G}\right)\sqrt[4]{\varepsilon_t} \ge -\frac{\delta}{4}.
\end{align*}

Thus, we have that for all $t\geq 1$:
\begin{align} \label{eq:thm:smoothConv:1}
& \frac{1}{\sqrt{2}}\left(2\max\lbrace\beta_{X},\beta_{Xy}\rbrace\sqrt{\frac{1}{\gamma}+8}+\left(1+\frac{2\sqrt{2r^*}}{\lambda_{r^*}(\X^*)}\right)G\right)^{-1} \nonumber
\\ & \ \ \ \left(\delta -\frac{1}{\eta}\log\left(\frac{\varepsilon_{t-1}}{\varepsilon_t}\right) +\frac{2\varepsilon_t}{\eta} -4\max\lbrace\beta_{X},\beta_{Xy}\rbrace\left(\sqrt{\frac{1}{\gamma}+8}\sqrt{\varepsilon_t}+\sqrt{\frac{\eta G}{\gamma}}\sqrt[4]{\varepsilon_t}\right)\right) \nonumber
\\ & \ge \frac{1}{2\sqrt{2}}\left(7\max\lbrace\beta_{X},\beta_{Xy}\rbrace+\left(1+\frac{2\sqrt{2r^*}}{\lambda_{r^*}(\X^*)}\right)G\right)^{-1}\delta.
\end{align}

Thus, in order to invoke Lemma \ref{lemma:series_error} for all $t\geq 1$ it suffices to prove that for all $t\geq 1$: i. $\lambda_{r^*+1}(\X_t) \leq \varepsilon_{t-1}/(n-r)$ and ii. $\sqrt{\breg_{\X}(\X^*,\X_{t})+\breg_{\y}(\y^*,\y_{t})} \leq \textrm{RHS of } \eqref{eq:thm:smoothConv:1}$. 

The requirement $\lambda_{r^*+1}(\X_t) \leq \varepsilon_{t-1}/(n-r)$ holds trivially by the design of the algorithm and since the SVD parameter satisfies $r=r^*$. 

We now prove by induction that indeed for all $t\geq 1$, $\sqrt{\breg_{\X}(\X^*,\X_{t})+\breg_{\y}(\y^*,\y_{t})} \leq \textrm{RHS of } \eqref{eq:thm:smoothConv:1}$. The base case $t=1$ clearly holds due to the choice of initialization. Now, if the assumption holds for all $i\in\{1,...,t\}$, then invoking Lemma \ref{lemma:series_error} for all $i\in\{1,...,t\}$ it holds that $\max\lbrace\breg_{\X}(\X^*,\X_{i+1})-\breg_{\X}(\X^*,\widehat{\X}_{i+1}),\breg_{\X}(\widehat{\Z}_{i+1},\Z_{i+1})\rbrace\le 2\varepsilon_i$ for all $i\in\{1,...,t\}$, where $\widehat{\X}_{t+1}$ and $\widehat{\Z}_{t+1}$ are the exact updates as defined in Algorithm \ref{alg:LRMP}. Thus, invoking Lemma \ref{lemma:decreasingRadius} with non-exact updates (i.e., $\theta=1$) for all $i=1,...,t$ we have that
\begin{align} \label{ineq:recursionOfRadious}
& \breg_{\X}(\X^*,\X_{t+1})+\breg_{\y}(\y^*,\y_{t+1}) \nonumber \\ & \le \breg_{\X}(\X^*,\X_{t})+\breg_{\y}(\y^*,\y_{t})+4\varepsilon_t+ 2\eta G\sqrt{\varepsilon_t} \nonumber
\\ &  \le \ldots \le \breg_{\X}(\X^*,\X_{1})+\breg_{\y}(\y^*,\y_{1})+4\sum_{i=1}^{t}\varepsilon_i+2\eta G\sum_{i=1}^{t}\sqrt{\varepsilon_i} \nonumber
\\ &  \leq \breg_{\X}(\X^*,\X_{1})+\breg_{\y}(\y^*,\y_{1})+\frac{\tilde{\varepsilon}_0^2}{4\max\{G^2\eta^2,1\}}\sum_{i=1}^{\infty}\frac{1}{(i+1)^3} +\frac{\tilde{\varepsilon}_0}{2}\sum_{i=1}^{\infty}\frac{1}{(i+1)^{3/2}} \nonumber
\\ & \le \breg_{\X}(\X^*,\X_{1})+\breg_{\y}(\y^*,\y_{1})+0.1\tilde{\varepsilon}_0^2 +0.9\tilde{\varepsilon}_0 \nonumber
\\ & \underset{(a)}{\le} \breg_{\X}(\X^*,\X_{1})+\breg_{\y}(\y^*,\y_{1})+R_0^2 \underset{(b)}{\le} 2R_0^2,
\end{align}
where (a) follows since
\begin{align*}
R_0 & = \frac{1}{4}\left(7\max\lbrace\beta_{X},\beta_{Xy}\rbrace+\left(1+\frac{2\sqrt{2r^*}}{\lambda_{r^*}(\X^*)}\right)G\right)^{-1}\delta
\le \frac{\delta}{4G}
\\ & \le \frac{\lambda_{n-r^*}(\nabla_{\X}f(\X^*,\y^*))-\lambda_{n}(\nabla_{\X}f(\X^*,\y^*))}{4\cdot\sup_{(\X,\y)\in\Sn\times\mathcal{K}}\Vert \nabla_{\X} f(\X,\y)\Vert_2} \le 1,
\end{align*}
and (b) follows from our initialization assumption and Lemma \ref{lem:warmstart}.

Thus, we obtain that 
\begin{align*}
\sqrt{\breg_{\X}(\X^*,\X_{t+1})+\breg_{\y}(\y^*,\y_{t+1})} & \le \sqrt{2}R_0  
\\ & = \frac{1}{2\sqrt{2}}\left(7\max\lbrace\beta_{X},\beta_{Xy}\rbrace+\left(1+\frac{2\sqrt{2r^*}}{\lambda_{r^*}(\X^*)}\right)G\right)^{-1}\delta,
\end{align*}
and the induction holds.

Invoking Lemma \ref{lemma:series_error} for all $t\geq 1$ guarantees that for all $t\ge 1$, $\breg_{\X}(\widehat{\Z}_{t+1},\Z_{t+1})\le 2\varepsilon_t$ and for all $t\ge 1$ and $\X\in\Sn$, $\breg_{\X}(\X,\X_{t+1})-\breg_{\X}(\X,\widehat{\X}_{t+1})\le 2\varepsilon_t$. Therefore, it holds from Lemma \ref{lemma:convergenceApproxMethod} with non-exact updates that for all $T\geq 1$,
%\textcolor{red}{
\begin{align*}
& \max_{\y\in\mathcal{K}} f\left(\frac{1}{T}\sum_{t=1}^T \Z_{t+1},\y\right) -  \min_{\X\in\Sn} f\left(\X,\frac{1}{T}\sum_{t=1}^T\w_{t+1}\right)
\\ & \le \frac{D^2}{\eta T} + \frac{2\eta^{-1}+4(\beta_{X}+\beta_{yX})}{T}\sum_{t=1}^T \varepsilon_t +\frac{2G}{T}\sum_{t=1}^T\sqrt{\varepsilon_t}.
\end{align*}%}

In order to bound the RHS of this inequality, we note that the following inequalities hold:
\begin{align*}
& \sum_{t=1}^{T}\sqrt{\varepsilon_t} \leq \sum_{t=1}^{T}\frac{R_0^2}{4\max\{G\eta,1\}}\frac{1}{(t+1+c)^{3/2}} 
 < \frac{R_0^2}{4\max\{G\eta,1\}}\int_0^{\infty}\frac{dt}{(t+1)^{3/2}} \\ & = \frac{R_0^2}{2\max\{G\eta,1\}}.\\
& \sum_{t=1}^{T}\varepsilon_t \leq \sum_{t=1}^{T}\frac{R_0^4}{16\max\{G^2\eta^2,1\}}\frac{1}{(t+1+c)^{3}} < \frac{R_0^4}{16\max\{G^2\eta^2,1\}}\int_0^{\infty}\frac{dt}{(t+1)^{3}} 
\\ & = \frac{R_0^4}{32\max\{G^2\eta^2,1\}}.
\end{align*}

Plugging-in these  bounds we obtain the rate in the theorem.
\end{proof}

\subsection{Efficiently-computable certificates for convergence of Algorithm  \ref{alg:LRMP}}\label{sec:certificateMP}
Similarly to the Euclidean case (see Section \ref{sec:certificate}), Theorem \ref{thm:allPutTogetherVonNeumann} only holds when the iterates are within some bounded distance from the optimal solution. However, from a practical point of view, it might be impossible to determine whether an initialization choice indeed satisfies this condition. 
Let us recall that the convergence result of Theorem \ref{thm:allPutTogetherVonNeumann} is based on proving that the error terms in the convergence rate in Lemma  \ref{lemma:convergenceApproxMethod} (which establishes the convergence of a generic mirror-prox method with approximated updates) are appropriately bounded.
Thus, in the special case of the bregman distance induced by the von Neummann entropy (as in the primal updates of  Algorithm \ref{alg:LRMP}), it is of importance to establish an easily computable certificate that can ensure that the error terms associated with the approximated sequences  in Lemma \ref{lemma:convergenceApproxMethod} are indeed sufficiently bounded to guarantee the  convergence of  Algorithm \ref{alg:LRMP}. As already suggested in \cite{garber2020efficient}, such certificates arise from the bounds in \eqref{ineq:toUseCertificateConvergenceP} and \eqref{ineq:toUseCertificateConvergenceB}. For any iteration $t$ of the algorithm, if it can be verified that both
\begin{align*} 
\log\left(\frac{(n-r)\lambda_{r+1}(\P_t)}{\varepsilon_t \sum_{i=1}^n \lambda_i(\P_t)}\right) \le 2\varepsilon_t \quad \textrm{and} \quad   \log\left(\frac{(n-r)\lambda_{r+1}(\B_t)}{\varepsilon_t \sum_{i=1}^n \lambda_i(\B_t)}\right) \le 2\varepsilon_t,
\end{align*}
hold, then it immediately follows that 
$$\max\left\lbrace\max_{\X\in\mS_n}\breg_{\X}(\X,\X_{t+1})-\breg_{\X}(\X,\widehat{\X}_{t+1}),\breg_{\X}(\widehat{\Z}_{t+1},\Z_{t+1})\right\rbrace \le 2\varepsilon_t$$ also holds, as required. 

To avoid computing the trace of $\P_t$ and $\B_t$, as that would require calculating $\P_t$ and $\B_t$ which we are striving to avoid, the weaker conditions 
\begin{align*} 
\log\left(\frac{(n-r)\lambda_{r+1}(\P_t)}{\varepsilon_t \sum_{i=1}^k \lambda_i(\P_t)}\right) \le 2\varepsilon_t \quad \textrm{and} \quad   \log\left(\frac{(n-r)\lambda_{r+1}(\B_t)}{\varepsilon_t \sum_{i=1}^k \lambda_i(\B_t)}\right) \le 2\varepsilon_t,
\end{align*}
can be computed for some $k<<n$. Specifically, choosing $k=r+1$ requires merely increasing the necessary rank of the SVD computations by $1$ and as we demonstrate in Section \ref{sec:expr}, seems to work well in practice.

\section{Empirical Evidence}\label{sec:expr}

The goal of this section is to bring empirical evidence in support of our theoretical approach.
We consider various tasks that take the form of minimizing a composite objective, i.e., the sum of a smooth convex function and a nonsmooth convex function, where the nonsmoothness comes from either an $\ell_1$-norm or $\ell_2$-norm regularizer / penalty term, over a $\tau$-scaled spectrahedron. In all cases the nonsmooth objective can be written as a saddle-point with function $f(\X,\y)$ which is linear in $\y$ and in particular satisfies Assumption \ref{ass:struct}. 

For all tasks considered we generate random instances, and examine the sequences of iterates generated by Algorithm \ref{alg:EG} or Algorithm \ref{alg:LRMP} $\lbrace(\X_t,\y_t)\rbrace_{t\ge1}$, $\lbrace(\Z_{t},\w_t)\rbrace_{t\ge2}$, when initialized with simple initialization procedures. Out of both sequences generated, we choose our candidate for the optimal solution to be the iterate for which the dual-gap, which is a certificate for optimality, is smallest. See Appendix \ref{appendixDualGapCalculation}. We denote this solution by $(\X^*,\y^*)$.

We consider several different types of nonsmooth low-rank matrix recovery problems where the goal is to recover a ground-truth low-rank matrix $\M_0\in\mathbb{S}^n$ from some noisy observation of it $\M=\M_0+\N$, where $\N\in\mathbb{S}^{n}$ is a noise matrix. We measure the signal-to-noise ratio (SNR) as $\Vert\M_0\Vert_F^2\big/\Vert\N\Vert_F^2$. 
In all experiments we measure the relative initialization error by $\left\Vert\frac{\trace(\M_0)}{\tau}\X_1-\M_0\right\Vert_F^2\Big{/}\left\Vert\M_0\right\Vert_F^2$, and similarly we measure the relative recovery error by $\left\Vert\frac{\trace(\M_0)}{\tau}\X^*-\M_0\right\Vert_F^2\Big{/}\left\Vert\M_0\right\Vert_F^2$. Note that in some of the experiments we  take $\tau<\trace(\M_0)$ to prevent the method from overfitting the noise. In addition, we measure the (standard) strict complementarity parameter which corresponds to the eigen-gap $\lambda_{n-r}(\nabla_{\X}f(\X^*,\y^*))-\lambda_{n}(\nabla_{\X}f(\X^*,\y^*))$, $r = \rank(\M_0)$.

\subsection{Evidence for strict complementarity and performance of the Euclidean low-rank extragradient method}
We begin by providing extensive empirical support for both the plausibility of the strict complementarity assumption, and for the correct convergence (see explanation in the sequel) of our low-rank extragradient method, from simple ``warm-start'' initializations. Here we focus on the Euclidean setting since the low-rank extragradient method is simpler and more straightforward to implement than the low-rank mirror-prox with MEG updates --- Algorithm \ref{alg:LRMP}, which requires additional parameters and tuning. In particular, as discussed in Section \ref{sec:certificate} (see also reminder in the sequel), in the Euclidean setting it is straightforward to verify if the low-rank updates to the primal matrix variables match those of the exact extragradient method, and hence we can easily verify that our low-rank extragradient method indeed produces exactly the same sequences of iterates as its exact counterpart. %Finally,  for most of the tasks considered in this section, the smoothness parameters are not dependent on the type of norm equipped upon $\mathbb{S}^n$, and so there is no advantage in these cases to the von Neumann entropy setting.

The tasks considered in this section include 1. sparse PCA, 2. robust PCA, 3. low-rank and sparse recovery, 4. phase synchronization, and 5. linearly-constrained low-rank estimation, under variety of parameters.

In all experiments we use SVDs of rank $r=\rank(\M_0)$ to compute the projections in Algorithm \ref{alg:EG} according to the truncated projection given in \eqref{truncatedProjection}. To certify the correctness of these low-rank projections (that is, that they equal the exact Euclidean projection) we confirm that the inequality
\begin{align} \label{convergenceCondition:inExp}
\sum_{i=1}^r\lambda_i(\Q_j)\ge \tau+r\cdot\lambda_{r+1}(\Q_j)
\end{align}
always holds for $\Q_1=\X_t-\eta\nabla_{\X}f(\X_t,\Y_t)$ and $\Q_2=\X_t-\eta\nabla_{\X}f(\Z_{t+1},\W_{t+1})$ (see also Section \ref{sec:certificate}). Indeed, we can now already state our main observation from the experiments:

\smallskip
\framebox{\parbox{\dimexpr\linewidth-8\fboxsep-2\fboxrule}{\itshape%
In all tasks considered and for all random instances generated, throughout all iterations of Algorithm \ref{alg:EG}, when initialized with a simple ``warm-start'' strategy and when computing only rank-$r$ truncated projections, $r=\rank(\M_0)$, the truncated projections of $\Q_1=\X_t-\eta\nabla_{\X}f(\X_t,\Y_t)$ and $\Q_2=\X_t-\eta\nabla_{\X}f(\Z_{t+1},\W_{t+1})$ equal their exact full-rank counterparts. That is, Algorithm \ref{alg:EG}, using only rank-$r$ SVDs, computed exactly the same sequences of iterates it would have computed if using full-rank SVDs.
}}
\smallskip

Aside from the above observation, in the sequel we demonstrate that all models considered indeed satisfy that: 1. the returned solution is of the same rank as the ground-truth matrix and satisfies the strict complementarity condition with non-negligible parameter  (measured by the eigengap $\lambda_{n-r}(\nabla_{\X}f(\X^*,\y^*))-\lambda_{n}(\nabla_{\X}f(\X^*,\y^*))$), 2. the recovery error of the returned solution indeed improves significantly over the error of the initialization point.

\subsubsection{Sparse PCA.} \label{sec:sparsePCA}

We consider the sparse PCA problem in a well known convex formulation taken form \cite{d2007direct} and its equivalent saddle-point formulation:
\begin{align*}
\min_{\substack{\trace(\X) =1,\\ \X\succeq 0}} \langle{\X,-\M}\rangle + \lambda\Vert{\X}\Vert_1 =
\min_{\substack{\trace(\X) =1,\\ \X\succeq 0}} \max_{\Vert\Y\Vert_{\infty}\le1}\lbrace\langle\X,-\M\rangle+\lambda\langle\X,\Y\rangle\rbrace,
\end{align*}
where $\M = \z\z^{\top} + \frac{c}{2}(\N+\N^{\top})$ is a noisy observation of a rank-one matrix $\z\z^{\top}$, with $\z$ being a sparse unit vector. Each entry $\z_i$ is chosen to be $0$ with probability $0.9$ and $U\lbrace1,\ldots,10\rbrace$ with probability $0.1$, and then we normalize $\z$ to be of unit norm.

We test the results obtained when adding different magnitudes of Gaussian or uniform noise. %For the sake of comparison between the different dimensions, 
We set the signal-to-noise ratio (SNR) to be a constant. Thus, we set the noise level to $c=\frac{2}{\textrm{SNR}\cdot\Vert\N+\N^{\top}\Vert_F}$ for our choice of $\textrm{SNR}$.

We initialize the $\X$ variable with the rank-one approximation of $\M$. That is, we take $\X_1=\u_1\u_1^{\top}$, where $\u_1$ is the top eigenvector of $\M$. For the $\Y$ variable we initialize it with $\Y_1=\sign(\X_1)$ which is a subgradient of $\Vert\X_1\Vert_1$.

We set the step-size to $\eta=1/(2\lambda)$ and we set the number of iterations to $T=1000$ and for any set of parameters we average the measurements over $10$ i.i.d. runs.

\begin{table}[h]
\caption{Numerical results for the tensor completion problem. Each result is the average of 10 i.i.d. runs.}\label{table:sparsePCAuniformBigNoise}
\begin{tabular*}{\textwidth}{@{\extracolsep\fill}lcccc} 
\toprule
 dimension (n) & $100$ & $200$ & $400$ & $600$ \\ \midrule
  \multicolumn{5}{c}{ $\downarrow$ $\N\sim U[0,1]$, $\textrm{SNR}=1$ $\downarrow$}\\ 
  \midrule
  $\lambda$ & $0.008$ & $0.004$ & $0.002$ & $0.0013$ \\ 
  initialization error & $0.5997$ & $0.6009$ & $0.5990$ & $0.6002$ \\ 
    recovery error & $0.0054$ & $0.0040$ & $0.0035$ & $0.0043$ \\ 
    dual gap & $4.1\times{10}^{-5}$ & $7.9\times{10}^{-5}$ & $4.9\times{10}^{-5}$ & $3.4\times{10}^{-6}$ \\
  $\lambda_{n-1}(\nabla_{\X}f(\X^*,\y^*))-\lambda_{n}(\nabla_{\X}f(\X^*,\y^*))$ & $0.8840$ & $0.8898$ & $0.8938$ & $0.8777$ \\ 
  \midrule 
  \multicolumn{5}{c}{ $\downarrow$ $\N\sim U[0,1]$, $\textrm{SNR}=0.05$ $\downarrow$}\\
  \midrule
  %noise level (c) & $0.0827$ & $0.0414$ & $0.0207$ & $0.0138$ \\ \hline
  $\lambda$ & $0.04$ & $0.02$ & $0.01$ & $0.0067$ \\ 
  initialization error & $1.7456$ & $1.7494$ & $1.7566$ & $1.7625$ \\ 
  recovery error& $0.0425$ & $0.0244$ & $0.0149$ & $0.0100$ \\ 
    dual gap & $2.0\times{10}^{-9}$ & $5.8\times{10}^{-6}$ & $4.5\times{10}^{-4}$ & $0.0018$ \\
 $\lambda_{n-1}(\nabla_{\X}f(\X^*,\y^*))-\lambda_{n}(\nabla_{\X}f(\X^*,\y^*))$ & $0.7092$ & $0.7854$ & $0.8340$ & $0.8622$ \\ 
 \midrule   
  \multicolumn{5}{c}{ $\downarrow$ $\N\sim\mathcal{N}(0.5,\I_n)$, $\textrm{SNR}=1$ $\downarrow$}\\ 
  \midrule
  %noise level (c) & $0.0115$ & $0.0058$ & $0.0029$ & $0.0019$ \\ \hline
 $\lambda$ & $0.006$ & $0.003$ & $0.0015$ & $0.001$ \\ 
  initialization error & $0.1584$ & $0.1464$ & $0.1443$ & $0.1411$ \\ 
  recovery error & $0.0059$ & $0.0033$ & $0.0019$ & $0.0015$ \\ 
    dual gap & $8.6\times{10}^{-4}$ & $0.0031$ & $0.0053$ & $0.0060$ \\ 
  % $\left\Vert\X^*\right\Vert_1$ & $9.6755$ & $17.6205$ & $34.9582$ & $50.4909$ \\ \hline
   $\lambda_{n-1}(\nabla_{\X}f(\X^*,\y^*))-\lambda_{n}(\nabla_{\X}f(\X^*,\y^*))$ & $0.8406$ & $0.8869$ & $0.9178$ & $0.9331$ \\ 
   \midrule 
  \multicolumn{5}{c}{ $\downarrow$ $\N\sim\mathcal{N}(0.5,\I_n)$, $\textrm{SNR}=0.05$ $\downarrow$}\\ \midrule
    %noise level (c) & $0.0513$ & $0.0258$ & $0.0129$ & $0.0086$ \\ \hline
  $\lambda$ & $0.04$ & $0.02$ & $0.01$ & $0.005$ \\ 
  initialization error & $1.6701$ & $1.6620$ & $1.6542$ & $1.6610$ \\ 
  recovery error & $0.0502$ & $0.0234$ & $0.0137$ & $0.0109$ \\ 
    dual gap & $1.9\times{10}^{-5}$ & $0.0041$ & $0.0534$ & $0.0409$ \\ 
  $\lambda_{n-1}(\nabla_{\X}f(\X^*,\y^*))-\lambda_{n}(\nabla_{\X}f(\X^*,\y^*))$ & $0.2200$ & $0.4076$ & $0.5460$ & $0.6788$ \\ 
  \botrule
   \end{tabular*}
\end{table}

\subsubsection{Low-rank and sparse matrix recovery.}

We consider the problem of recovering a simultaneously low-rank and sparse covariance matrix \cite{lowRankAndSparse}, which can be written as the following saddle-point optimization problem:
\begin{align*}
\min_{\substack{\trace(\X) =1,\\ \X\succeq 0}}\frac{1}{2}\Vert{\X-\M}\Vert_F^2 + \lambda\Vert{\X}\Vert_1 =
\min_{\substack{\trace(\X) =\tau,\\ \X\succeq 0}} \max_{\Vert\Y\Vert_{\infty}\le1}\frac{1}{2}\Vert\X-\M\Vert_F^2+\lambda\langle\X,\Y\rangle,
\end{align*} 
where $\M = {\Z_0}{\Z_0}^{\top} + \frac{c}{2}(\N+\N^{\top})$ is a noisy observation of some low-rank and sparse covariance matrix ${\Z_0}{\Z_0}^{\top}$. We choose $\Z_0\in\mathbb{R}^{n\times r}$ to be a sparse matrix where each entry ${\Z_0}_{i,j}$ is chosen to be $0$ with probability $0.9$ and $U\lbrace1,\ldots,10\rbrace$ with probability $0.1$, and then we normalize $\Z_0$ to be of unit Frobenius norm. We choose $\N\sim\mathcal{N}(0.5,\I_n)$.

We test the model with $\rank(\Z_0\Z_0^{\top})=1,5,10$. %For the sake of comparison between the different dimensions, 
We set the signal-to-noise ratio (SNR) to be a constant and set the noise level to $c=\frac{2\Vert\Z_0\Z_0^{\top}\Vert_F}{\textrm{SNR}\cdot\Vert\N+\N^{\top}\Vert_F}$ for our choice of $\textrm{SNR}$. 

We initialize the $\X$ variable with the rank-r approximation of $\M$. That is, we take $\X_1=\U_r\diag\left(\Pi_{\Delta_{\tau,r}}[\diag(-\bLambda_r)]\right)\U_r^{\top}$, where $\U_r\bLambda_r\U_r^{\top}$ is the rank-r eigen-decomposition of $\M$ and $\Delta_{\tau,r}=\lbrace\z\in\reals^r~|~\z\ge0,\ \sum_{i=1}^r \z_i=\tau\rbrace$ is the simplex of radius $\tau$ in $\reals^r$. For the $\Y$ variable we initialize it with $\Y_1=\sign(\X_1)$ which is a subgradient of $\Vert\X_1\Vert_1$.  

We set the step-size to $\eta=1$, $\tau=0.7\cdot\trace(\Z_0\Z_0^{\top})$, and the number of iterations in each experiment to $T=2000$. For each value of $r$ and $n$ we average the measurements over $10$ i.i.d. runs.

\begin{table}[h]
\caption{Numerical results for the low-rank and sparse matrix recovery problem.}\label{table:lowRank&SparseRank10}
\begin{tabular*}{\textwidth}{@{\extracolsep\fill}lcccc}  \toprule 
  dimension (n) & $100$ & $200$ & $400$ & $600$ \\ 
  \midrule
  \multicolumn{5}{c}{$\downarrow$ $r=\rank(\Z_0\Z_0^{\top})=1$, $\textrm{SNR}=0.48$ $\downarrow$}\\ 
  \midrule
  $\lambda$ & $0.0012$ & $0.0035$ & $0.0016$ & $0.001$ \\ 
%noise level (c) & $0.0166$ & $0.0083$ & $0.0042$ & $0.0028$ \\ \hline
  initialization error & $0.4562$ & $0.4471$ & $0.4507$ & $0.4450$ \\ 
    recovery error & $0.0364$ & $0.0193$ & $0.0160$ & $0.0168$ \\ 
    dual gap & $0.0083$ & $0.0086$ & $0.0020$ & $4.2\times{10}^{-4}$ \\ 
 $\lambda_{n-r}(\nabla_{\X}f(\X^*,\y^*))-\lambda_{n}(\nabla_{\X}f(\X^*,\y^*))$ & $0.0628$ & $0.1439$ & $0.1258$ & $0.1069$ \\ 
 \midrule
\multicolumn{5}{c}{$\downarrow$ $r=\rank(\Z_0\Z_0^{\top})=5$, $\textrm{SNR}=2.4$ $\downarrow$}\\ 
\midrule
 $\lambda$ & $0.0012$ & $0.0006$ & $0.0003$ & $0.0002$ \\ 
%noise level (c) & $0.0036$ & $0.0018$ & $8.6\times{10}^{-4}$ & $5.6\times{10}^{-5}$ \\ \hline
  initialization error & $0.2132$ & $0.2103$ & $0.1983$ & $0.1907$ \\ 
    recovery error & $0.0641$ & $0.0478$ & $0.0349$ & $0.0274$ \\ 
    dual gap & $9.0\times{10}^{-4}$ & $4.3\times{10}^{-4}$ & $1.4\times{10}^{-4}$ & $7.3\times{10}^{-5}$ \\ 
$\lambda_{n-r}(\nabla_{\X}f(\X^*,\y^*))-\lambda_{n}(\nabla_{\X}f(\X^*,\y^*))$ & $0.0148$ & $0.0200$ & $0.0257$ & $0.0277$ \\ 
\midrule
\multicolumn{5}{c}{$\downarrow$ $r=\rank(\Z_0\Z_0^{\top})=10$, $\textrm{SNR}=4.8$ $\downarrow$}\\ 
\midrule
  $\lambda$ & $0.0007$ & $0.0004$ & $0.0002$ & $0.0001$ \\ 
%noise level (c) & $0.0019$ & $8.9\times{10}^{-4}$ & $4.3\times{10}^{-4}$ & $2.8\times{10}^{-4}$ \\ \hline
  initialization error & $0.1855$ & $0.1661$ & $0.1527$ & $0.1473$ \\ 
    recovery error & $0.0702$ & $0.0403$ & $0.0268$ & $0.0356$ \\ 
    dual gap & $4.9\times{10}^{-4}$ & $6.6\times{10}^{-4}$ & $4.2\times{10}^{-4}$ & $3.4\times{10}^{-5}$ \\
 $\lambda_{n-r}(\nabla_{\X}f(\X^*,\y^*))-\lambda_{n}(\nabla_{\X}f(\X^*,\y^*))$ & $0.0072$ & $0.0142$ & $0.0187$ & $0.0160$ \\ 
 \botrule
   \end{tabular*}
\end{table}

\subsubsection{Robust PCA.}

We consider the robust PCA problem \cite{robustPCA1} in the following formulation:
\begin{align*}
\min_{\substack{\trace(\X) =\tau,\\ \X\succeq 0}}\Vert{\X-\M}\Vert_1 =
\min_{\substack{\trace(\X) =\tau,\\ \X\succeq 0}} \max_{\Vert\Y\Vert_{\infty}\le1}\langle\X-\M,\Y\rangle,
\end{align*}
where $\M = r\Z_0\Z_0^{\top} + \frac{1}{2}(\N+\N^{\top})$ is a sparsely-corrupted observation of some rank-r  matrix $\Z_0\Z_0^{\top}$. We choose $\Z_0\in\mathbb{R}^{n\times r}$ to be a random unit Frobenius norm matrix. For $\N\in\reals^{n\times n}$, we choose each entry to be $0$ with probability $1-1/\sqrt{n}$ and otherwise $1$ or $-1$ with equal probability.

We initialize the $\X$ variable with the projection $\X_1=\Pi_{\lbrace\trace(\X)=\tau,\ \X\succeq 0\rbrace}[\M]$, and the $\Y$ variable with $\Y_1=\sign(\X_1-\M)$.

We test the model with $\rank(\Z_0\Z_0^{\top})=1,5,10$. For $\rank(\Z_0\Z_0^{\top})=1$ we set the step-size to $\eta=n/10$ and for $\rank(\Z_0\Z_0^{\top})=5,10$ we set it to $\eta=1$. We set $\tau=0.95\cdot\trace(r\Z_0\Z_0^{\top})$. For each value of $r$ and $n$ we average the measurements over $10$ i.i.d. runs.

\begin{table}[h]
\caption{Numerical results for the robust PCA problem.} \label{table:robustPCArank10}
\begin{tabular*}{\textwidth}{@{\extracolsep\fill}lcccc} 
\toprule 
 dimension (n) & $100$ & $200$ & $400$ & $600$ \\ 
 \midrule
 \multicolumn{5}{c}{$\downarrow$ $r=\rank(\Z_0\Z_0^{\top})=1$, $T=3000$ $\downarrow$}\\ \midrule
SNR & $0.0021$ & $7.2\times{10}^{-4}$ & $2.5\times{10}^{-4}$ & $1.3\times{10}^{-4}$ \\ 
  initialization error & $1.3511$ & $1.3430$ & $1.2889$ & $1.2606$ \\ 
    recovery error & $0.0084$ & $0.0107$ & $0.0109$ & $0.0107$ \\ 
    dual gap & $0.0016$ & $0.0029$ & $0.0044$ & $0.0069$ \\ 
 $\lambda_{n-r}(\nabla_{\X}f(\X^*,\y^*))-\lambda_{n}(\nabla_{\X}f(\X^*,\y^*))$ & $15.5944$ & $41.2139$ & $85.8117$ & $140.5349$ \\ 
 \midrule
\multicolumn{5}{c}{$\downarrow$ $r=\rank(\Z_0\Z_0^{\top})=5$, $T=20,000$ $\downarrow$}\\ \midrule
 SNR & $0.0110$ & $0.0038$ & $0.0013$ & $6.9\times{10}^{-4}$ \\ 
  initialization error & $1.5501$ & $1.5527$ & $1.5221$ & $1.4833$ \\ 
    recovery error & $0.0092$ & $0.0092$ & $0.0087$ & $0.0075$  \\ 
    dual gap & $0.0084$ & $0.0390$ & $0.1866$ & $0.4721$ \\ 
 $\lambda_{n-r}(\nabla_{\X}f(\X^*,\y^*))-\lambda_{n}(\nabla_{\X}f(\X^*,\y^*))$ & $7.6734$ & $26.2132$ & $66.1113$ & $108.7215$  \\ 
 \midrule
\multicolumn{5}{c}{$\downarrow$ $r=\rank(\Z_0\Z_0^{\top})=10$, $T=30,000$ $\downarrow$}\\ \midrule
SNR & $0.0229$ & $0.0077$ & $0.0026$ & $0.0014$ \\ 
  initialization error & $1.5729$ & $1.6485$ & $1.6317$ & $1.5949$ \\
  recovery error & $0.0079$ & $0.0081$ & $0.0073$ & $0.0065$  \\ 
  dual gap & $0.0139$ & $0.0338$ & $0.1533$ & $0.3561$ \\ 
$\lambda_{n-r}(\nabla_{\X}f(\X^*,\y^*))-\lambda_{n}(\nabla_{\X}f(\X^*,\y^*))$ & $1.7945$ & $16.9890$ & $48.9799$ & $82.2727$  \\
 \botrule
   \end{tabular*}
\end{table}

\subsubsection{Phase synchronization.}

We consider the phase synchronization problem (see for instance \cite{phaseSyncronization1}) which can be written as:
\begin{align} \label{phaseSync}
\max_{\substack{\z\in\mathbb{C}^n ,\\ \vert z_j\vert=1\ \forall j\in[n]}}\z^*\M\z,
\end{align}
where $\M = \z_0\z_0^{*} + c\N$ is a noisy observation of some rank-one matrix $\z_0\z_0^{*}$ such that $\z_0\in\mathbb{C}^n$ and ${\z_0}_j=e^{i\theta_j}$ where $\theta_j\in[0,2\pi]$. We follow the statistical model in \cite{phaseSyncronization1} where the noise matrix $\N\in\mathbb{C}^{n\times{}n}$ is chosen such that every entry is
\begin{align*}
\N_{jk}=\Bigg\lbrace\begin{array}{lc}\mathcal{N}(0,1)+i\mathcal{N}(0,1) & j<k
\\ \overline{\N}_{kj} & j>k 
\\ 0 & j=k\end{array}.
\end{align*}
It is known that for a large $n$ and $c=\mathcal{O}\left(\sqrt{\frac{n}{\log{}n}}\right)$, with high probability the SDP relaxation of \eqref{phaseSync} is able to recover the original signal (see \cite{phaseSyncronization1}).

We solve a penalized version of the SDP relaxation of \eqref{phaseSync} which can be written as the following saddle-point optimization problem:
\begin{align*}
\min_{\substack{\trace(\X) =n,\\ \X\succeq 0}}\langle{\X,-\M}\rangle + \lambda\Vert{\textrm{diag}(\X)-\overrightarrow{\mathbf{1}}}\Vert_2 =
\min_{\substack{\trace(\X)=n,\\ \X\succeq 0}} \max_{\Vert\y\Vert_2\le1}\langle\X,-\M\rangle+\lambda\langle\diag(\X)-\overrightarrow{\mathbf{1}},\y\rangle,
\end{align*}
where $\overrightarrow{\mathbf{1}}$ is the all-ones vector.

While the phase synchronization problem is formulated over the complex numbers, extending our results to handle this model is straightforward.

We initialize the $\X$ variable with the rank-one approximation of $\M$. That is, we take $\X_1=n\u_1\u_1^{*}$, where $\u_1$ is the top eigenvector of $\M$. For the $\y$ variable we initialize it with $\y_1=(\diag(\X_1)-\overrightarrow{\mathbf{1}})/\Vert\diag(\X_1)-\overrightarrow{\mathbf{1}}\Vert_2$.

We set the noise level to $c=0.18\sqrt{n}$. We set the number of iterations in each experiment to $T=10,000$ and for each choice of $n$ we average the measurements over $10$ i.i.d. runs.

%\begin{table*}[!htb]\renewcommand{\arraystretch}{1.3}
%{\footnotesize
%\begin{center}
%  \begin{tabular}{| l | c | c | c | c | c | c | c | c | c |} \hline 
%  dimension (n) & $100$ & $200$ & $400$ & $600$ \\ \hline
%  SNR & $0.1553$ & $0.0775$ & $0.0387$ & $0.0258$ \\ \hline
%$\lambda$ & $200$ & $600$ & $1600$ & $2800$ \\ \hline
%$\eta$ & $1/400$ & $1/800$ & $1/1800$ & $1/1800$ \\ \hline
%  initialization error & $0.1270$ & $0.1255$ & $0.1284$ & $0.1323$ \\ \hline
%  recovery error & $0.0698$ & $0.0659$ & $0.0683$ & $0.0719$  \\ \hline
%  dual gap & $7.8\times10^{-8}$ & $3.9\times10^{-5}$ & $0.1553$ & $0.5112$ \\ \hline
%  $\lambda_{n-1}(\nabla_{\X}f(\X^*,\y^*))-\lambda_{n}(\nabla_{\X}f(\X^*,\y^*))$ & $39.8591$ & $78.9982$ & $150.3524$ & $217.06$  \\ \hline 
%  $\Vert\diag(\X^*)-\overrightarrow{\mathbf{1}}\Vert_2$ & $3.2\times10^{-10}$ & $2.1\times10^{-8}$ & $5.1\times10^{-7}$ & $3.7\times10^{-7}$ \\ \hline
%   \end{tabular}
%  \caption{Numerical results for the phase synchronization problem.
%  }\label{table:phaseSync}
%\end{center}
%}
%\vskip -0.2in
%\end{table*}\renewcommand{\arraystretch}{1}

\begin{table}[h]
\caption{Numerical results for the phase synchronization problem.}\label{table:phaseSync}
\begin{tabular*}{\textwidth}{@{\extracolsep\fill}lcccc} 
\toprule 
 dimension (n) & $100$ & $200$ & $400$ & $600$ \\ \midrule
 SNR & $0.1553$ & $0.0775$ & $0.0387$ & $0.0258$ \\ 
$\lambda$ & $200$ & $600$ & $1600$ & $2800$ \\
$\eta$ & $1/400$ & $1/800$ & $1/1800$ & $1/1800$ \\
  initialization error & $0.1270$ & $0.1255$ & $0.1284$ & $0.1323$ \\ 
  recovery error & $0.0698$ & $0.0659$ & $0.0683$ & $0.0719$  \\ 
  dual gap & $7.8\times10^{-8}$ & $3.9\times10^{-5}$ & $0.1553$ & $0.5112$ \\
  $\lambda_{n-1}(\nabla_{\X}f(\X^*,\y^*))-\lambda_{n}(\nabla_{\X}f(\X^*,\y^*))$ & $39.8591$ & $78.9982$ & $150.3524$ & $217.06$  \\  
 $\Vert\diag(\X^*)-\overrightarrow{\mathbf{1}}\Vert_2$ & $3.2\times10^{-10}$ & $2.1\times10^{-8}$ & $5.1\times10^{-7}$ & $3.7\times10^{-7}$ \\ 
 \botrule
   \end{tabular*}
\end{table}

\subsubsection{Linearly constrained low-rank matrix estimation.}

We consider a linearly constrained low-rank matrix estimation problem, which we consider in the following penalized formulation:
\begin{align*}
\min_{\substack{\trace(\X)=1,\\ \X\succeq 0}} \langle\X,-\M\rangle+\lambda\Vert\mathcal{A}(\X)-\b\Vert_2 = \min_{\substack{\trace(\X)=1,\\ \X\succeq 0}} \max_{\Vert\y\Vert_2\le1}\langle\X,-\M\rangle+\lambda\langle\mathcal{A}(\X)-\b,\y\rangle,
\end{align*}

where $\M = \z_0\z_0^{\top} + \frac{c}{2}(\N+\N^{\top})$ is the noisy observation of some rank-one matrix $\z_0\z_0^{\top}$ such that $\Vert\z_0\Vert_2=1$ and the noise matrix is chosen $\N\sim\mathcal{N}(0,\I_n)$. We take $\mathcal{A}(\X)=(\langle\A_i,\X\rangle,\ldots,\langle\A_m,\X\rangle)^{\top}$ with matrices $\A_1,\ldots,\A_m\in\mathbb{S}^n$ of the form $\A_i=\v_i\v_i^{\top}$ such that $\v_i\sim\mathcal{N}(0,1)$. We take $\b\in\reals^m$ such that $b_i=\langle\A_i,\z_0\z_0^{\top}\rangle$.

We initialize the $\X$ variable with the rank-one approximation of $\M$. That is, we take $\X_1=\u_1\u_1^{\top}$, where $\u_1$ is the top eigenvector of $\M$. The $\y$ variable is initialized with $\y_1=(\mathcal{A}(\X_1)-\b)/\Vert\mathcal{A}(\X_1)-\b\Vert_2$.

We set the number of constraints to $m=n$, the penalty parameter to $\lambda=2$, and the step-size to $\eta=1/(2\lambda)$. We set the number of iterations in each experiment to $T=2000$ and for each value of $n$ we average the measurements over  $10$ i.i.d. runs.

%\begin{table*}[!htb]\renewcommand{\arraystretch}{1.3}
%{\footnotesize
%\begin{center}
%  \begin{tabular}{| l | c | c | c | c | c | c | c | c | c |} \hline 
%  dimension (n) & $100$ & $200$ & $400$ & $600$ \\ \hline
%%  noise level (c) & $0.0365$ & $0.0257$ & $0.0177$ & $0.0143$ \\ \hline
%  $\textrm{SNR}$ & $0.15$ & $0.075$ & $0.04$ & $0.027$ \\ \hline
%% $\lambda$ & $2$ & $2$ & $2$ & $2$ \\ \hline
%  initialization error & $0.1219$ & $0.1324$ & $0.1242$ & $0.1228$ \\ \hline
%  recovery error & $0.0437$ & $0.0617$ & $0.0685$ & $0.0735$  \\ \hline
%  dual gap & $5.3\times10^{-11}$ & $5.0\times10^{-12}$ & $8.5\times10^{-12}$ & $2.3\times10^{-11}$ \\ \hline
%  $\lambda_{n-1}(\nabla_{\X}f(\X^*,\y^*))-\lambda_{n}(\nabla_{\X}f(\X^*,\y^*))$ & $0.2941$ & $0.3409$ & $0.4690$ & $0.5069$  \\ \hline
%  $\Vert\mathcal{A}(\X^*)-\b\Vert_2$ & $0.0080$ & $0.0082$ & $0.0079$ & $0.0073$ \\ \hline
%   \end{tabular}
%  \caption{Numerical results for the linearly constrained low-rank matrix estimation problem. 
%  }\label{table:linearlyConstrained}
%\end{center}
%}
%\vskip -0.2in
%\end{table*}\renewcommand{\arraystretch}{1}

\begin{table}[h]
\caption{Numerical results for the linearly constrained low-rank matrix estimation problem.} \label{table:linearlyConstrained}
\begin{tabular*}{\textwidth}{@{\extracolsep\fill}lcccc}  
\toprule
 dimension (n) & $100$ & $200$ & $400$ & $600$ \\ 
 \midrule
%  noise level (c) & $0.0365$ & $0.0257$ & $0.0177$ & $0.0143$ \\ \hline
$\textrm{SNR}$ & $0.15$ & $0.075$ & $0.04$ & $0.027$ \\ 
% $\lambda$ & $2$ & $2$ & $2$ & $2$ \\ \hline
  initialization error & $0.1219$ & $0.1324$ & $0.1242$ & $0.1228$ \\ 
  recovery error & $0.0437$ & $0.0617$ & $0.0685$ & $0.0735$  \\ 
  dual gap & $5.3\times10^{-11}$ & $5.0\times10^{-12}$ & $8.5\times10^{-12}$ & $2.3\times10^{-11}$ \\
  $\lambda_{n-1}(\nabla_{\X}f(\X^*,\y^*))-\lambda_{n}(\nabla_{\X}f(\X^*,\y^*))$ & $0.2941$ & $0.3409$ & $0.4690$ & $0.5069$  \\ 
$\Vert\mathcal{A}(\X^*)-\b\Vert_2$ & $0.0080$ & $0.0082$ & $0.0079$ & $0.0073$ \\ 
\botrule
   \end{tabular*}
\end{table}

\subsection{Comparison of low-rank mirror-prox methods}
In this section we empirically compare between the low-rank Euclidean extragradient method, denoted by Eue in Figure \ref{fig:MPvsEG}, and the low-rank mirror-prox which also updates the dual variables via standard Euclidean steps, but updates the primal variables via matrix exponentiated gradient steps which correspond to the bregman distance induced by the von Neumann entropy (i.e., Algorithm \ref{alg:LRMP}, where the dual variables are updated exactly as in the extragradient method --- Algorithm \ref{alg:EG}), denoted by VNE in Figure \ref{fig:MPvsEG}. Thus, the two methods differ only w.r.t. to the primal steps they compute. We compare both methods on the following low-rank and sparse least squares problem which can be written as:
\begin{align*}
\min_{\substack{\trace(\X) =\tau,\\ \X\succeq 0}}\frac{1}{2}\Vert\mathcal{A}(\X)-\b\Vert_{2}^2+\lambda\Vert\X\Vert_1 = \min_{\substack{\trace(\X) =\tau,\\ \X\succeq 0}}\max_{\Vert\Y\Vert_{\infty}\le1} \frac{1}{2}\Vert\mathcal{A}(\X)-\b\Vert_{2}^2+\lambda\langle\X,\Y\rangle.
\end{align*}

We choose this task for the comparison since the linear operator $\mA$ can create a significant difference between the primal smoothness parameters measured with respect to the Euclidean norm and with respect to the spectral norm. Also, the dual variable $\Y$ is maximized over the $\ell_{\infty}$-norm ball, for which the natural bregman distance is the Euclidean distance, which is used by both methods to update the dual variables. 

We set $\M = n\Z_0\Z_0^{\top}$ to be the ground-truth low-rank matrix, where $\Z_0\in\mathbb{R}^{n\times r}$ is a sparse matrix where each entry $\Z_0(i,j)$ is chosen to be $0$ with probability $0.9$ and a random standard Gaussian entry with probability $0.1$, and then normalizing $\Z_0$ to have unit Frobenius norm. We take $\mA(\X) = (\langle\A_1,\X\rangle,\ldots,\langle\A_m,\X\rangle)^{\top}$ with matrices $\A_1,\ldots,\A_m\in\mathbb{S}^n$ of the form $\A_i=\sum_{j=1}^k \v_i^{(j)}{\v_i^{(j)}}^{\top}$ such that $\v_i^{(j)}\sim\mathcal{N}(0,1)$. We set $\b_0\in\reals^m$ such that $\b_0(i)=\langle\A_i,\M\rangle$ and then we take $\b=\b_0+\n$ such that $\n=0.5\Vert\b_0\Vert_2\v$ for a random unit vector $\v\in\reals^{m}$. 

For both methods we initialize the $\X$ variable as
$\X_1 = \V_r\diag\left(\Pi_{\Delta_{\tau,r}}[\diag(-\bLambda_r)]\right)\V_r^{\top}$,
where $\Delta_{\tau,r}=\lbrace\z\in\reals^r~|~\z\ge0,\ \sum_{i=1}^r \z_i=\tau\rbrace$ is the simplex of radius $\tau$ in $\reals^r$, and $\Pi_{\Delta_{\tau,r}}[\cdot]$ denotes the Euclidean projection over it, $\V_r\bLambda_r\V_r$ is the rank-$r$ eigen-decomposition of $-\nabla_{\X}(\frac{1}{2}\Vert\mathcal{A}(\tau\U\U^{\top})-\b\Vert_{2}^2)=-\mathcal{A}^{\top}(\mathcal{A}(\tau\U\U^{\top})-\b)$, and $\U\in\reals^{n\times r}$ is produced by taking a random matrix with standard Gaussian entries and normalizing it to have a unit Frobenius norm. The $\Y$ variable is initialized such that $\Y_1=\sign(\X_1)$.

We set the number of measurements to $m=25n\cdot\rank(\M)$, the sequence $\lbrace\varepsilon_t\rbrace_{t=0}^T$ to $\varepsilon_t=\frac{1}{(t+c+1)^3}$ for some $c\in\reals$, and $\tau=0.5\cdot\trace(\M)$. For each choice of $\rank(\M)$ and $\rank(\A_i)$ we average the measurements over $10$ i.i.d. runs.

In Figure \ref{fig:MPvsEG} we compare the low-rank Euclidean extragradient method and the low-rank von Neumann entropy mirror-prox method. For each rank of $\M$ and each rank of the matrices $\A_i$, $i=1,\ldots,m$, we compare the convergence of both methods using several different magnitudes of step-sizes with respect to the function value and relative recovery error. We compute the basic step-size $\eta_0$ using the smoothness parameter %$\beta_X$ by replacing $\big\Vert\nabla_{\X}(\frac{1}{2}\Vert\mathcal{A}(\X)-\b\Vert_{2}^2+\lambda\langle\X,\Y\rangle)-\nabla_{\X}(\frac{1}{2}\Vert\mathcal{A}(\widetilde{\X})-\b\Vert_{2}^2+\lambda\langle\widetilde{\X},\Y\rangle)\big\Vert_F$ with a Euclidean norm on the vectorized terms, and thus obtain 
$\beta_{X}=\left\Vert (\textrm{vec}(\A_1),\ldots,\textrm{vec}(\A_m))^{\top}\right\Vert^2_2$, where $\textrm{vec}(\cdot)$ denotes the vectorization of the input matrix, and so $(\textrm{vec}(\A_1),\ldots,\textrm{vec}(\A_m))$ is an $n^2\times m$ matrix. To check the correctness of the low-rank Euclidean projections, we confirm that the inequality $\sum_{i=1}^r\lambda_i(\Q_j)\ge \tau+r\cdot\lambda_{r+1}(\Q_j)$
always holds for $\Q_1=\X_t-\eta\nabla_{\X}f(\X_t,\Y_t)$ and $\Q_2=\X_t-\eta\nabla_{\X}f(\Z_{t+1},\W_{t+1})$ (see also Section \ref{sec:certificate}). Likewise, to certify the correctness of the convergence of the low-rank von Neumann entropy mirror-prox method, we check that the conditions 
\begin{align} \label{ineq:condMPexperiments}
\log\left(\frac{(n-r)\lambda_{r+1}(\P_t)}{\varepsilon_t \sum_{i=1}^n \lambda_i(\P_t)}\right) \le 2\varepsilon_t\quad  \textrm{and}\quad \log\left(\frac{(n-r)\lambda_{r+1}(\B_t)}{\varepsilon_t \sum_{i=1}^n \lambda_i(\B_t)}\right) \le 2\varepsilon_t
\end{align}
always hold for $\B_t$ and $\P_t$ as defined in Algorithm \ref{alg:LRMP} (see also Section \ref{sec:certificateMP}), which guarantee that the error from the approximated steps is negligible. The only case where the conditions in \eqref{ineq:condMPexperiments} were not met was for the case of $\rank(\M)=3$, $\rank(\A_i)=1$ and step-size of $25\eta_0$, but as can be seen in Figure \ref{fig:MPvsEG}, the low-rank von Neumann entropy mirror-prox method does not converge in this case.
In general, as can be seen in Figure \ref{fig:MPvsEG}, for most cases the low-rank von Neumann entropy mirror-prox method converges faster than the low-rank extragradient method in terms of both the function value and the relative recovery error.

In Figure \ref{fig:MPvsLowrank} we compare the exact mirror-prox method with von Neumann entropy-based primal updates and Euclidean dual updates (see Algorithm \ref{alg:EGMP}), denoted by VNE  in Figure \ref{fig:MPvsLowrank}, and its low-rank variant, as in Algorithm \ref{alg:LRMP}, denoted by low-rank VNE in Figure \ref{fig:MPvsLowrank}. Here too, the conditions in \eqref{ineq:condMPexperiments} hold throughout all iterations, and as can be seen, both variants converge similarly in terms of the function value. 
In addition, we measure the bregman distances $\breg_{\X}(\widetilde{\Z}_t,\Z_t)$ and $\breg_{\X}(\widetilde{\X}_t,\X_t)$ where $\lbrace\Z_t\rbrace_{t\ge2}$ and $\lbrace\X_t\rbrace_{t\ge1}$ are the iterates obtained by the low-rank variant and $\lbrace\widetilde{\Z}_t\rbrace_{t\ge2}$ and $\lbrace\widetilde{\X}_t\rbrace_{t\ge1}$ are the iterates obtained by the standard exact variant. It can be seen in Figure \ref{fig:MPvsLowrank} that the distances between the iterates of the two methods indeed decay quickly.

%\graphicspath{ {./graphs/eps3_new/} }

\begin{figure}
\centering
%\FIGURE
  \begin{subfigure}[t]{0.311\textwidth}
    \includegraphics[width=\textwidth]{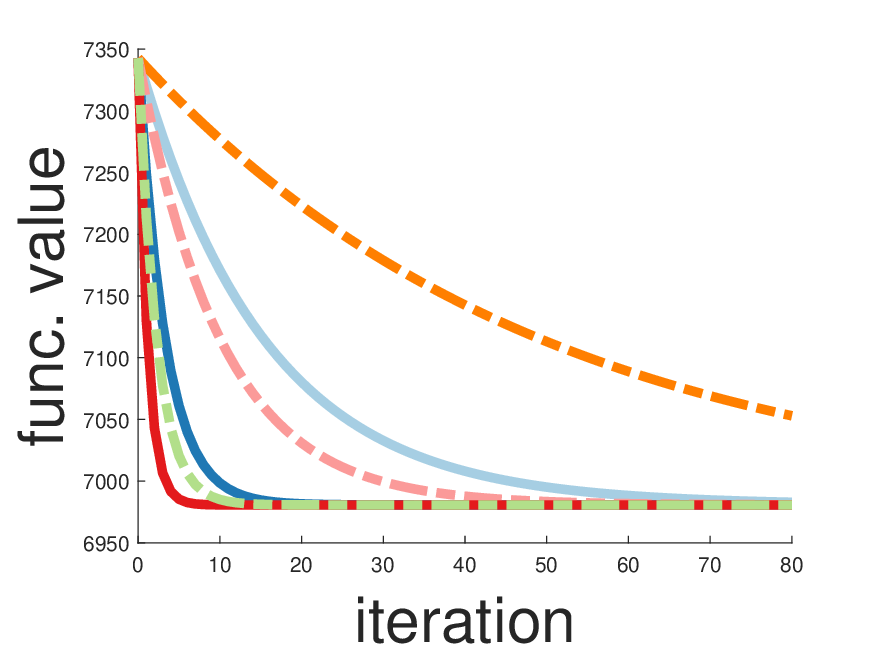}   
    %\caption{}
    %\label{fig:22}
  \end{subfigure}\hfil
    \begin{subfigure}[t]{0.438\textwidth}
    \includegraphics[width=\textwidth]{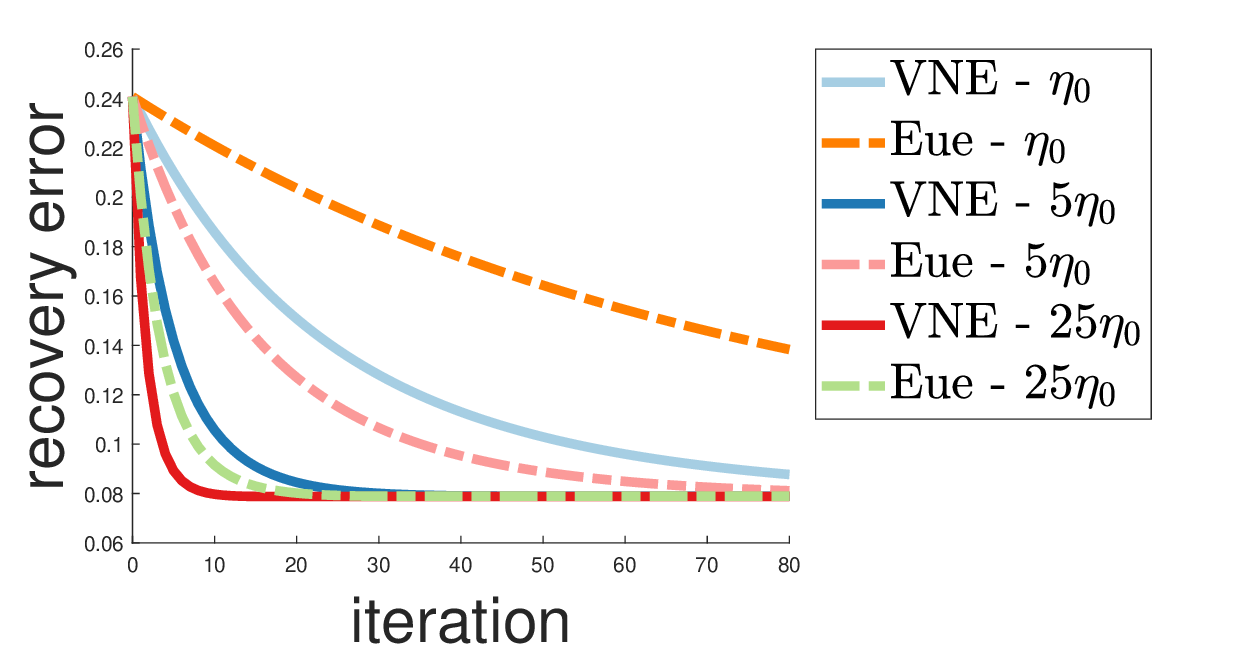} 
    %\caption{}
    %\label{fig:22}
  \end{subfigure}
    \caption*{$\lambda=0.001$, $\rank(\M)=1$, $\rank(\A_i)=1$, $c=5000$}
  \medskip
    \begin{subfigure}[t]{0.311\textwidth}
    \includegraphics[width=\textwidth]{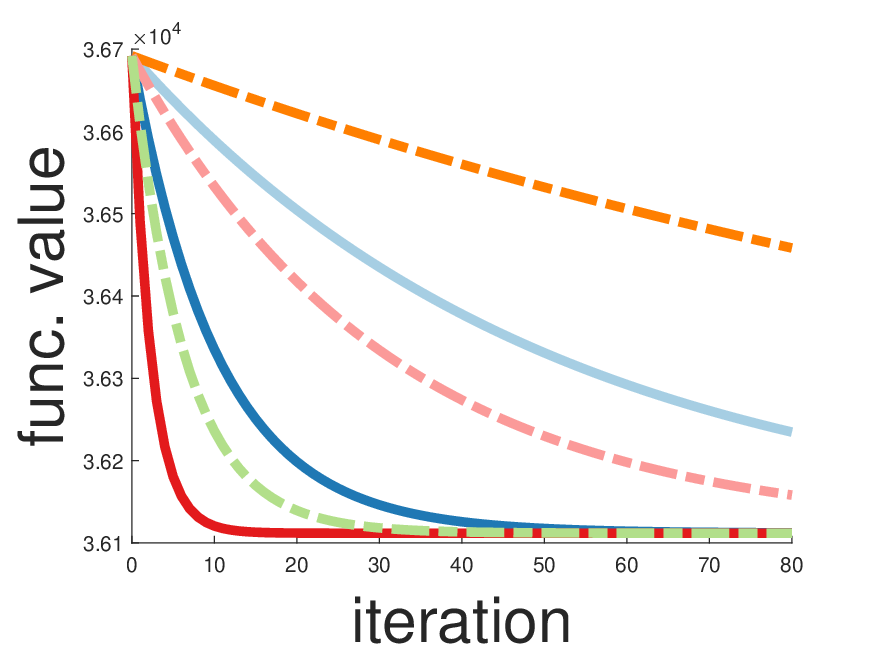}   
    %\caption{}
    %\label{fig:22}
  \end{subfigure}\hfil
    \begin{subfigure}[t]{0.438\textwidth}
    \includegraphics[width=\textwidth]{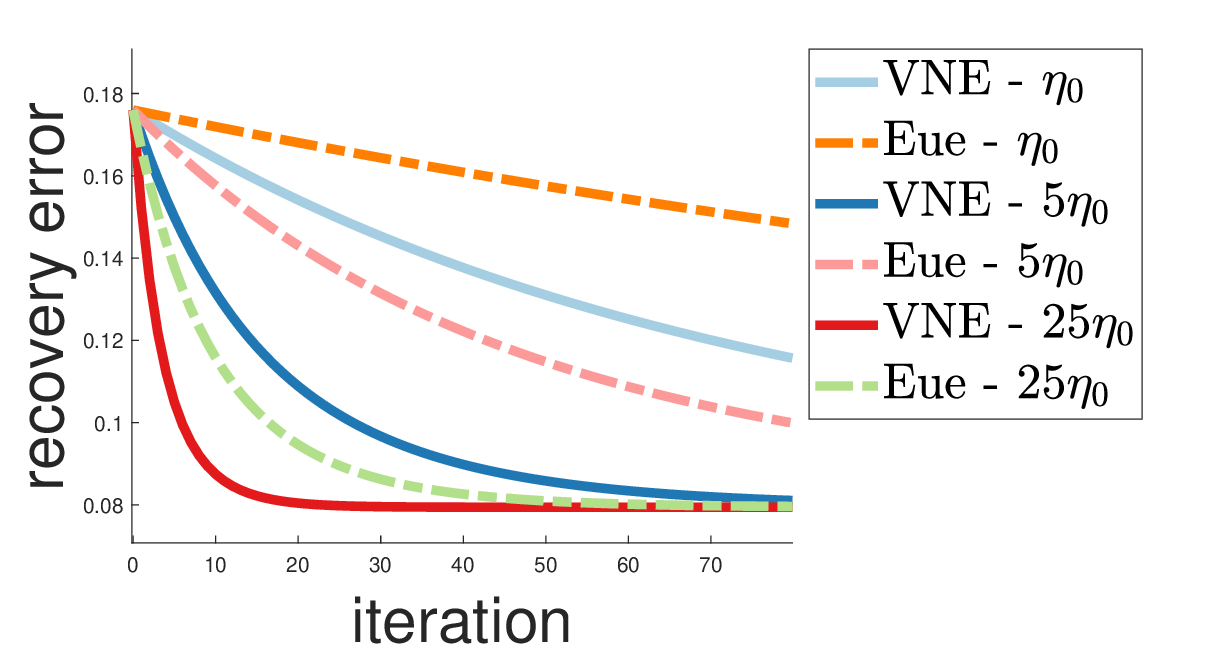} 
    %\caption{}
    %\label{fig:22}
  \end{subfigure}
    \caption*{$\lambda=0.0001$, $\rank(\M)=1$, $\rank(\A_i)=3$, $c=10000$}
    \medskip
    \begin{subfigure}[t]{0.311\textwidth}
    \includegraphics[width=\textwidth]{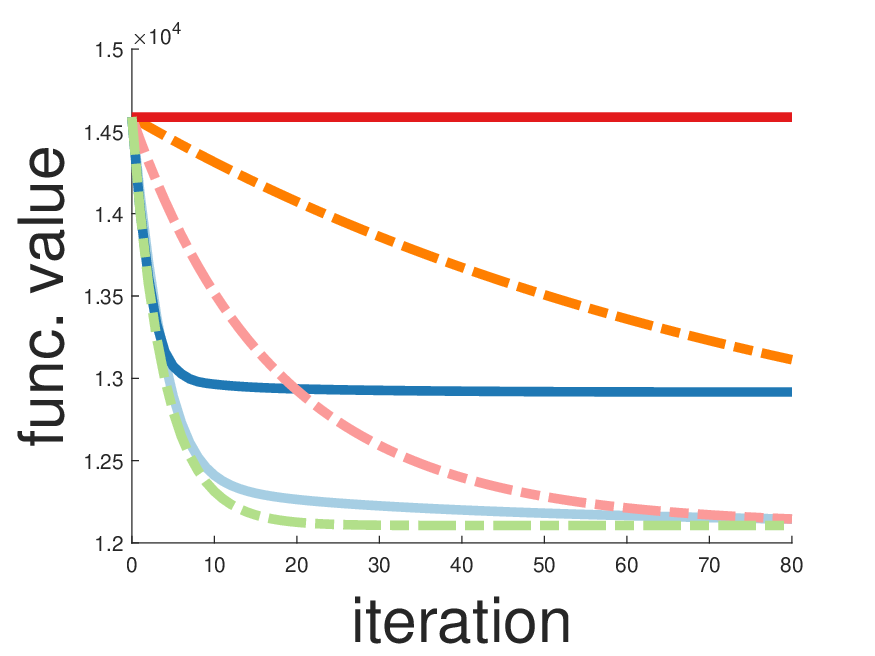}   
    %\caption{}
    %\label{fig:22}
  \end{subfigure}\hfil
    \begin{subfigure}[t]{0.438\textwidth}
    \includegraphics[width=\textwidth]{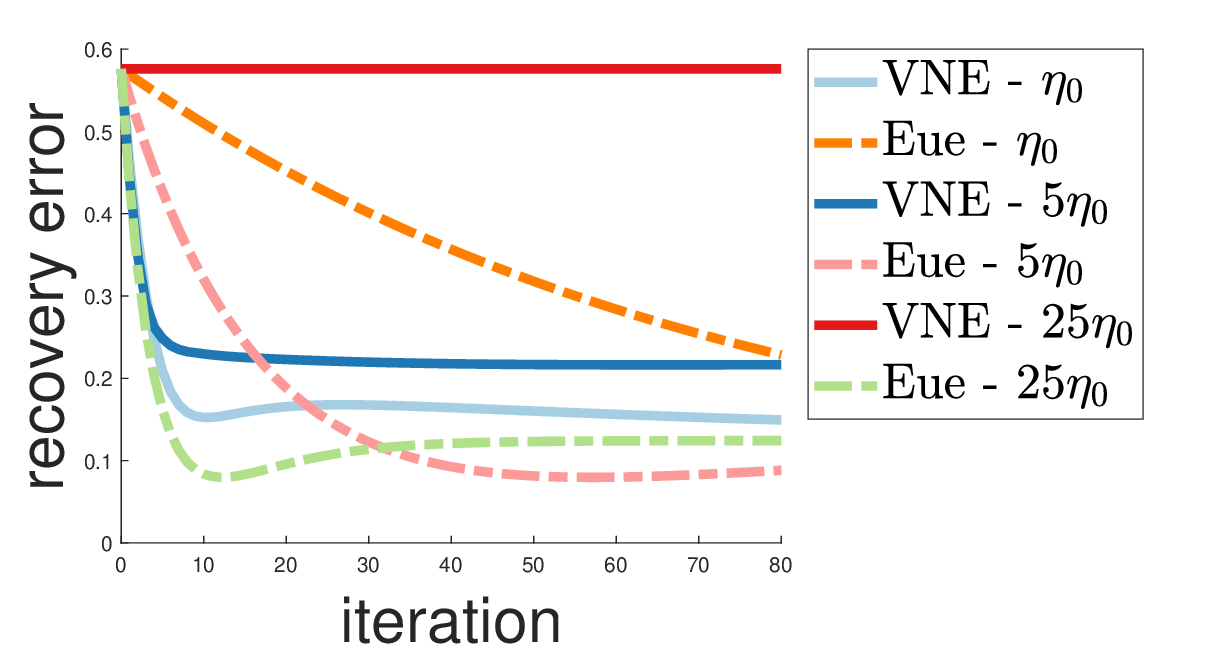} 
    %\caption{}
    %\label{fig:22}
  \end{subfigure}
  \caption*{$\lambda=0.0001$, $\rank(\M)=3$, $\rank(\A_i)=1$, $c=10000$}
  \caption{Comparison between the Euclidean extragradient method (Eue) and the von Neumann entropy mirror-prox method (VNE)  with several different step-sizes. 
  %$\eta_0$ is the theoretical Eucleadian step-size using $\beta_{X}=\left\Vert \left(\begin{array}{c}\textrm{vec}(\A_1)^{\top} \\ \vdots \\ \textrm{vec}(\A_m)^{\top} \end{array}\right)\right\Vert^2_2$.
  } \label{fig:MPvsEG}
\end{figure}

\begin{figure}
\centering
  \begin{subfigure}[t]{0.33\textwidth}
    \includegraphics[width=\textwidth]{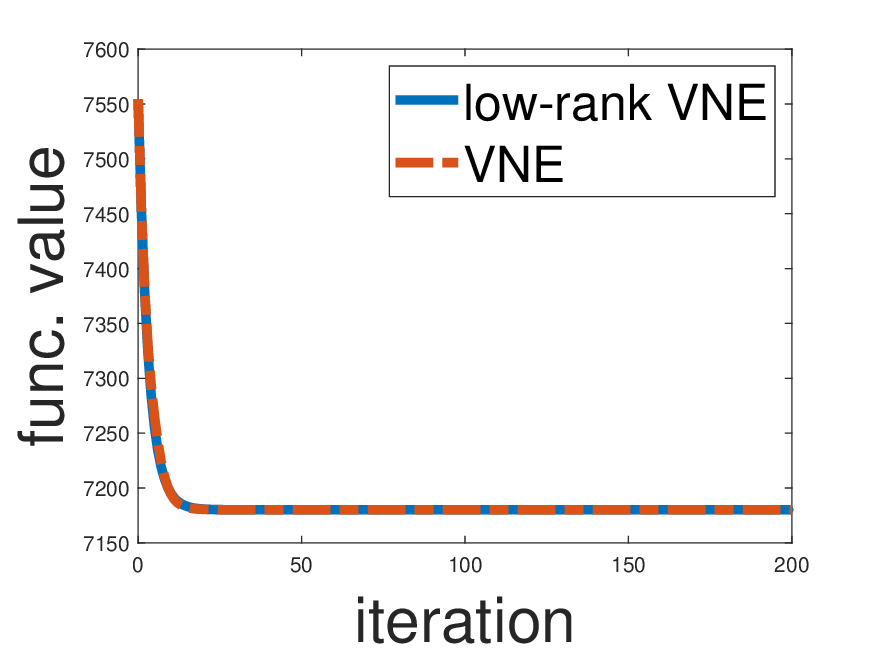}
    %\caption{}
    %\label{fig:12}
  \end{subfigure}\hfil
  \begin{subfigure}[t]{0.33\textwidth}
    \includegraphics[width=\textwidth]{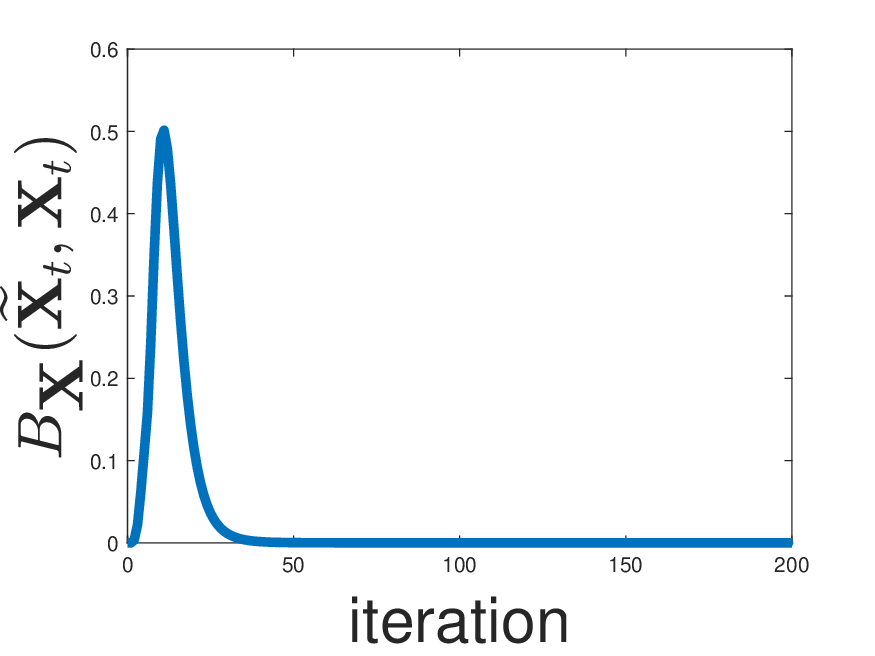} %{MPvsLowrankMP_A1X1_Xseries_lambda001_eps5000}    
    %\caption{}
    %\label{fig:22}
  \end{subfigure}\hfil
    \begin{subfigure}[t]{0.33\textwidth}
    \includegraphics[width=\textwidth]{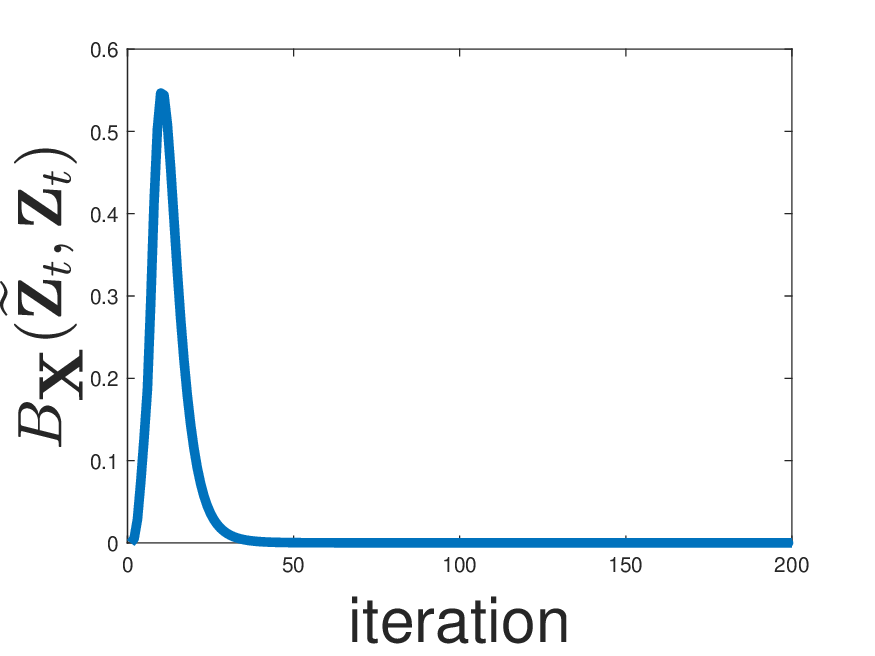}
  % {MPvsLowrankMP_A1X1_Zseries_lambda001_eps5000}    
    %\caption{}
    %\label{fig:22}
  \end{subfigure}
    \caption*{$\eta=0.1$, $\lambda=0.001$, $\rank(\M)=1$, $\rank(\A_i)=1$, $c=5000$}
\medskip
  \begin{subfigure}[t]{0.33\textwidth}
    \includegraphics[width=\textwidth]{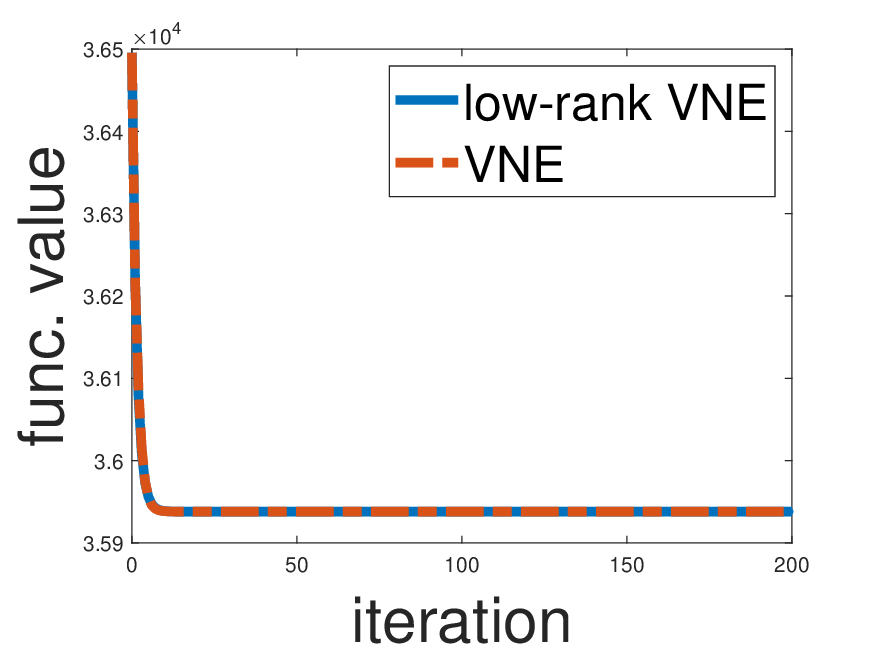}
    %\caption{}
    %\label{fig:12}
  \end{subfigure}\hfil
  \begin{subfigure}[t]{0.33\textwidth}
    \includegraphics[width=\textwidth]{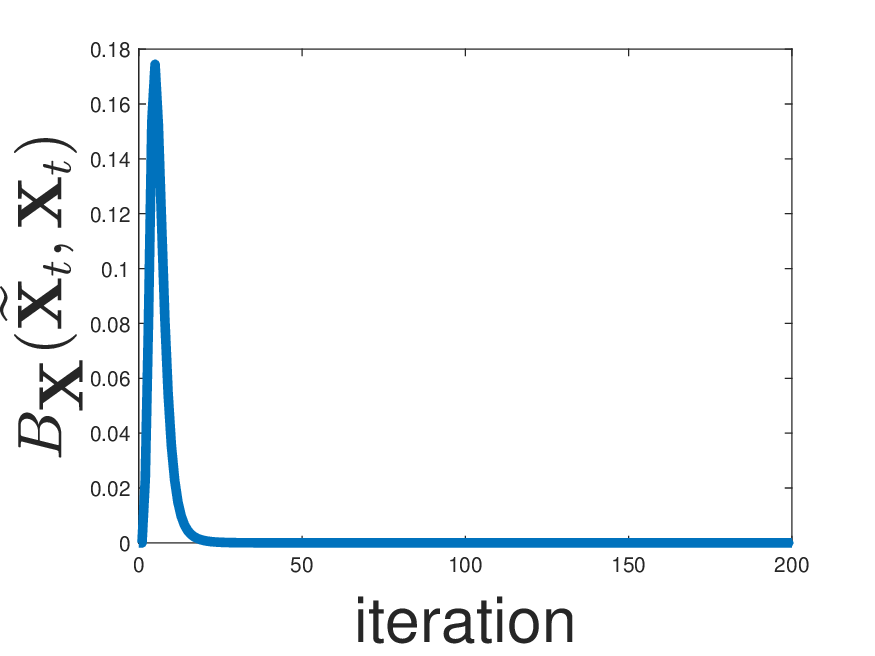} %{MPvsLowrankMP_A3X1_Xseries}    
    %\caption{}
    %\label{fig:22}
  \end{subfigure}\hfil
    \begin{subfigure}[t]{0.33\textwidth}
    \includegraphics[width=\textwidth]{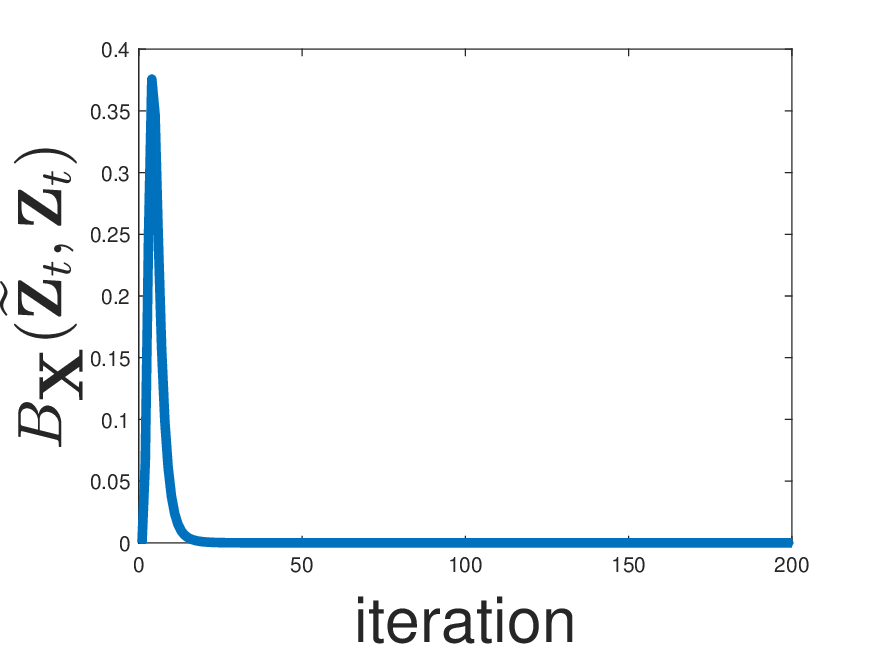} %{MPvsLowrankMP_A3X1_Zseries}    
    %\caption{}
    %\label{fig:22}
  \end{subfigure}
      \caption*{$\eta=0.1$, $\lambda=0.0001$, $\rank(\M)=1$, $\rank(\A_i)=3$, $c=10000$}
\medskip
  \begin{subfigure}[t]{0.33\textwidth}
    \includegraphics[width=\textwidth
    ]{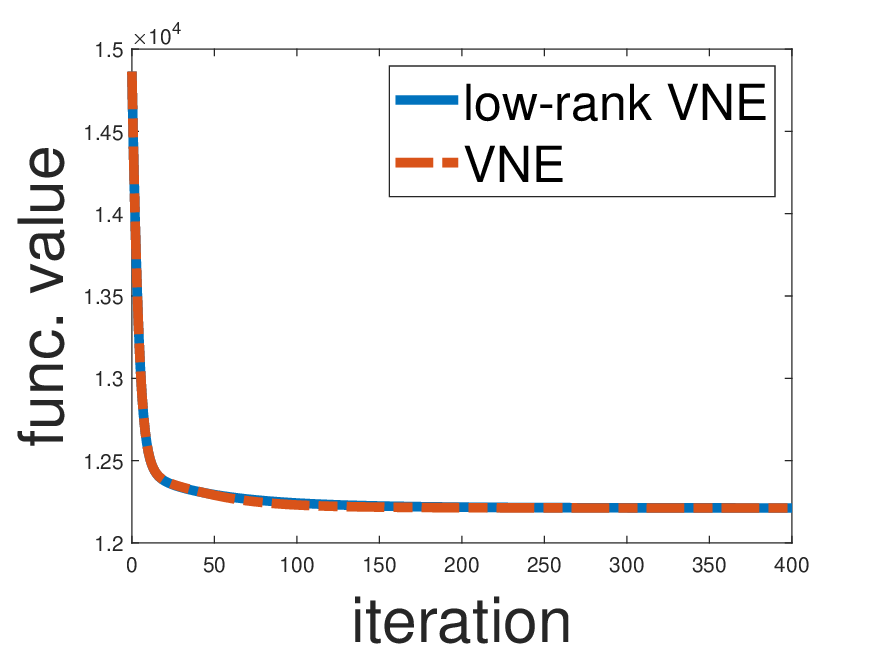}
    %\caption{}
    %\label{fig:12}
  \end{subfigure}\hfil
  \begin{subfigure}[t]{0.33\textwidth}
    \includegraphics[width=\textwidth]
   {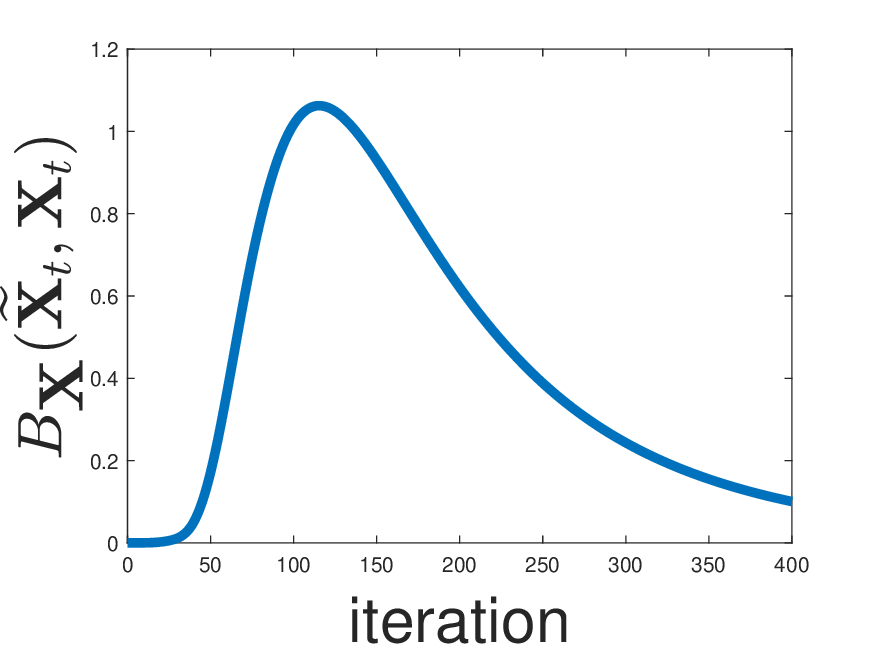} %{MPvsLowrankMP_A1X3_Xseries}    
    %\caption{}
    %\label{fig:22}
  \end{subfigure}\hfil
    \begin{subfigure}[t]{0.33\textwidth}
    \includegraphics[width=\textwidth]{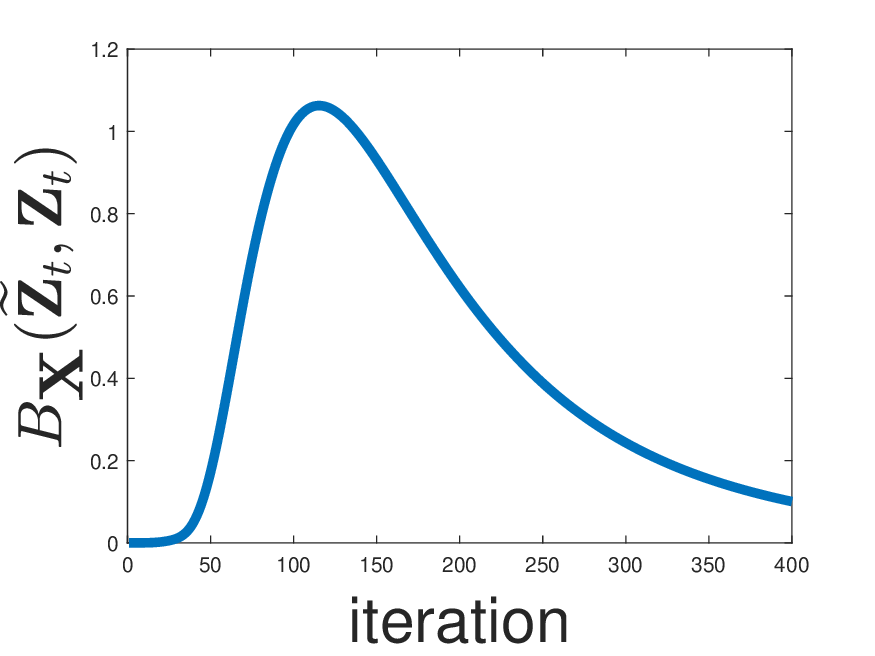} %{MPvsLowrankMP_A1X3_Zseries}    
    %\caption{}
    %\label{fig:22}
  \end{subfigure}
        \caption*{$\eta=0.006$, $\lambda=0.0001$, $\rank(\M)=3$, $\rank(\A_i)=1$, $c=10000$}
  \caption{Comparison between the standard von Neumann entropy mirror-prox (VNE) method and the low-rank von Neumann entropy mirror-prox (low-rank VNE).} \label{fig:MPvsLowrank}
\end{figure}

\subsection{Performance of low-rank subgradient descent on sparse PCA}

We now turn to demonstrate the performance of the subgradient descent method with low-rank projections (low-rank SD) on the sparse PCA problem and compare it with its exact counterpart which uses exact projections (SD), and our low-rank Euclidean extragradient method (low-rank EG). Recall that in Lemma \ref{lemma:negativeExampleNonSmooth} we have used the sparse PCA problem to establish that there exists instances which satisfy strict complementarity yet, at any proximity of the optimal solution, low-rank SD produces iterates of rank higher than that of the optimal solution. 
%compare the performance of our low-rank projected extragradient method to the subgradient descent method with both exact and low-rank projections on the sparse PCA problem, where the ground-truth matrix is rank-one. Accordingly, for both our low-rank extragradient method and the low-rank subgradient descent method we replace all projections with rank-one projections. 
We consider two types of instances of the sparse PCA problem. The first  is simply the deterministic problem from Lemma \ref{lemma:negativeExampleNonSmooth} with parameters $n=400$ and $k=n/4$.
For the second type of instances, we used random instances as described in Section \ref{sec:sparsePCA}, with $n=400$, $\N\sim U[0,1]$, and $\textrm{SNR}=0.05$. %, We set all the parameters as we set them in the corresponding experiment in Section \ref{sec:sparsePCA} except for the choice for the initialization which we choose as described in \eqref{eq:initialInExp}. 
For both types of instances we initialize all methods with a random rank-one matrix $\v_1\v_1^{\top}\in\Sn$, where $\v_1\in\reals^n$ is the unit-norm normalization of the vector $\u$ such that for all $i\in[n]$:
\begin{align} \label{eq:initialInExp}
\u_i=\Bigg\lbrace \begin{array}{ll} \z_i+\xi_i & i\in\support(\z) 
\\ 0 & i\not\in\support(\z)\end{array},
\end{align}
where $\z$ is the ground-truth sparse vector to be recovered and $\xi_i\sim\mathcal{N}(0,10^{-4})$ (in particular, with this initialization the condition $\langle\z,\v_1\rangle^2\ge1-1/(2k^2)$ stated in Lemma  \ref{lemma:negativeExampleNonSmooth} holds for all random draws of $\v_1$). For both types of instances we use rank-one projections for the low-rank SD and low-rank EG methods (we note that in all instances strict-complementarity indeed holds w.r.t. a rank-one optimal solution).
For all methods we tune the step-size for best performance. For the subgradient descent methods we choose the step-size $\eta=1/\sqrt{T}$ for the first type of instances, and $\eta=20/(n\sqrt{T})$ for the second type, where $T$ is the total number of iterations. For the extragradient method we use a step-size of $\eta=100\eta_0$ for both types of instances, where $\eta_0$ is the theoretical step-size (see Section \ref{sec:sparsePCA}) which we observe to be over pessimistic. We set the number of iterations to $T=400$ for the first type of instances and $T=1000$ for the second one. In both cases the results are the averages over $10$ i.i.d. runs.

In Figure \ref{fig:EGvsSD} we plot the values of the nonsmooth objective $g(\X)=\langle\X,-\M\rangle+\lambda\Vert\X\Vert_1$ for each method, where on each iteration we take the minimal value of $g$ achieved so far. 
%In addition, we checked whether the rank-one projections in the low-rank projected subgradient decent method and our low-rank extragradient method indeed equal to their exact full-rank counterparts 
%\begin{align} \label{ineq:SGcertificateExp}
%\lambda_1(\widetilde{\P}_t-\eta\g_t)-\lambda_2(\widetilde{\P}_t-\eta\g_t)\ge1
%\end{align}
%(see discussion in Section \ref{sec:certificate}). %, where $\lbrace\widetilde{\P}_t\rbrace_{t\ge1}$ are the iterates generated by the rank-$1$ projected subgradient decent and $\g_t\in\partial g(\widetilde{\P}_t)$ and whether the condition of \eqref{convergenceCondition:inExp} holds in every iteration for the projected extragradient method.
In accordance with the theoretical convergence rates, we see that the low-rank extragradient method indeed converges considerably faster than the two subgradient descent methods on both types of instances. 
%of projected subgradient decent and the result in Corollary \ref{cor:nonsmoothExtragradient}, the low-rank projected extragradient converges faster than the projected subgradient decent methods for both problems.
Also, it can be seen in Figure \ref{fig:12} that for the instance proposed in Lemma \ref{lemma:negativeExampleNonSmooth}, the low-rank projected subgradient method does not converge to the optimal value. %For this problem the condition of \eqref{convergenceCondition:inExp} held starting from the $4$th iteration and the condition of \eqref{ineq:SGcertificateExp} held starting from iteration number $151.6$.
Somewhat surprisingly, when considering random instances, we see in Figure \ref{fig:13} that the low-rank subgradient descent  converges faster than its counterpart with exact projections, despite the fact that upon investigation we observe that the low-rank projection is different than the exact one on almost every iteration. It may be an interesting future direction to theoretically study the convergence of low-rank subgradient descent in such circumstances. 

%the condition \eqref{ineq:SGcertificateExp} does not hold at all besides for two random iterations, yet the function value of the low-rank projected subgradient method still converges and interestingly outperforms the full-rank projected subgradient decent method. The condition of \eqref{convergenceCondition:inExp} holds throughout all iterations.
%These experiments demonstrate that a condition of the form of \eqref{ineq:SGcertificateExp} is not a sufficient or necessary  condition for guaranteeing the convergence of low-rank projected subgradient decent and opens a
%direction for future work of study whether there exist conditions that can guarantee a convergence of the low-rank projected subgradient decent method even if the low-rank projection is not equal to the corresponding full-rank projection.   
%Since, as proved in Lemma \ref{lemma:negativeExampleNonSmooth}, $\z\z^{\top}$ is a rank-$1$ optimal solution which satisfies strict complementarity, by lemma 9 in \cite{garber2019linear} $\z\z^{\top}$ is also a unique optimal solution. Therefore, 

%\begin{figure}
%\centering
%   \includegraphics[width=0.3\textwidth]{EGvsSD_sparsePCA_2table_theoreticalStepsizes}
%  \caption{Comparison between the low-rank extragradient method (low-rank EG), standard subgradient decent (SD), and low-rank subgradient decent (low-rank SD) for the sparse PCA problem. $n=400$} \label{fig:EGvsSD}
%\end{figure}

\begin{figure}
\centering
   \begin{subfigure}[t]{0.33\textwidth}
    \includegraphics[width=\textwidth]
    {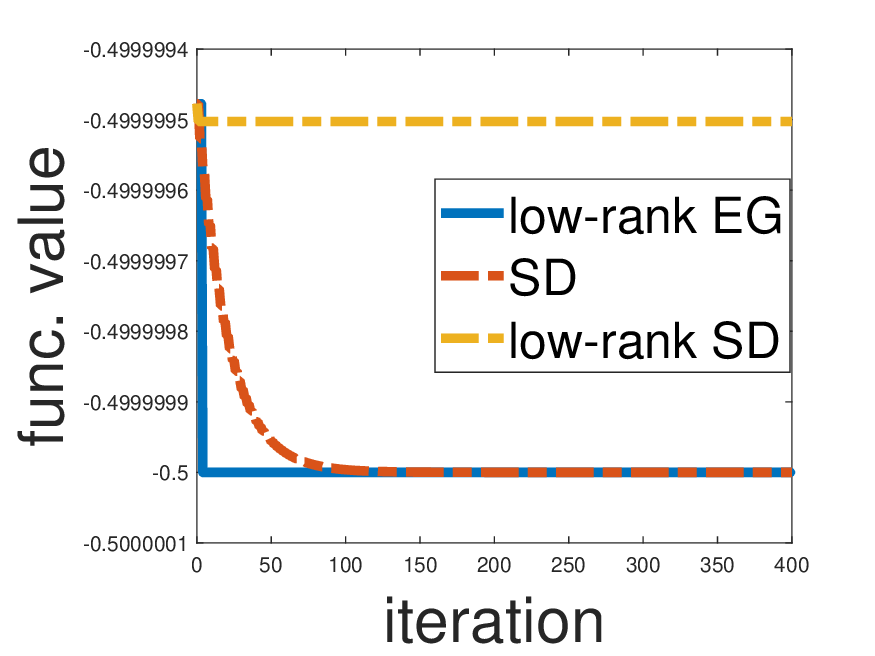}
    \caption{deterministic instance from Lemma \ref{lemma:negativeExampleNonSmooth}}
    \label{fig:12}
  \end{subfigure}
   \begin{subfigure}[t]{0.33\textwidth}
    \includegraphics[width=\textwidth]
    {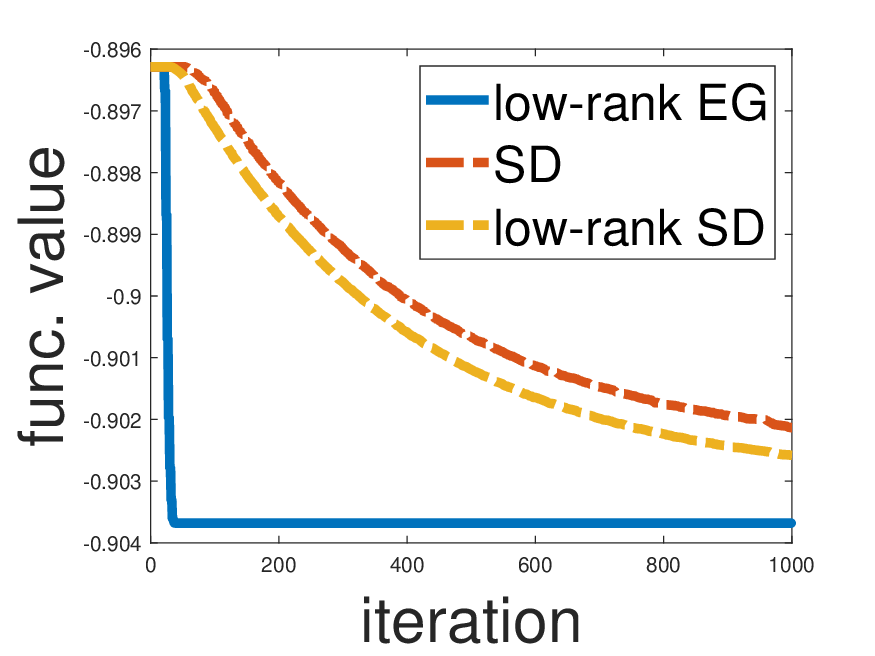}
    \caption{random instances}
    \label{fig:13}
  \end{subfigure}
     \caption{Comparison between the projected subgradient decent (SD), low-rank projected subgradient decent (low-rank SD), and low-rank projected extragradient (low-rank EG).} \label{fig:EGvsSD}
\end{figure}

\backmatter

\begin{appendices}

\section{Proofs Omitted from Section \ref{sec:strictComp}}
\label{sec:proofKKT}

\subsection{Proof of Lemma \ref{lemma:kkt}}
\label{sec:appendix:proofLemma2}
We first restate the lemma and then prove it.

\begin{lemma} 
Let $\X^*\in\Sn$ be a rank-$r^*$ optimal solution to Problem \eqref{nonSmoothProblem}. $\X^*$ satisfies the (standard) strict complementarity assumption with parameter $\delta>0$ if and only if there exists a subgradient $\G^*\in\partial g(\X^*)$ such that $\langle \X-\X^*,\G^*\rangle\ge0$ for all $\X\in\mathcal{S}_n$ and $\lambda_{n-r^*}(\G^*)-\lambda_{n}(\G^*)\ge\delta$.
\end{lemma}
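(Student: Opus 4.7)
The plan is to establish both directions via the KKT conditions for the constrained problem, noting that the Lagrangian is $L(\X,\Z,s)=g(\X)+s(1-\trace(\X))-\langle\Z,\X\rangle$, so that stationarity in $\X$ reads $s\I+\Z\in\partial g(\X)$. I will then identify the dual certificate $(\Z^*,s^*)$ with the shift/residual decomposition of a suitable subgradient $\G^*$.

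For the forward direction ($\Rightarrow$), suppose strict complementarity holds with dual optimum $(\Z^*,s^*)$, $\rank(\Z^*)=n-r^*$, $\lambda_{n-r^*}(\Z^*)\ge\delta$. Define $\G^*:=s^*\I+\Z^*$. From the standard KKT stationarity for this SDP I will argue $\G^*\in\partial g(\X^*)$, and complementary slackness $\langle\Z^*,\X^*\rangle=0$ together with $\trace(\X^*)=1$ gives $\langle\X-\X^*,\G^*\rangle=\langle\X,\Z^*\rangle\ge0$ for all $\X\in\Sn$ (since $\Z^*\succeq0$ and $\X\succeq0$). Finally, the eigenvalues of $\G^*$ are shifts of those of $\Z^*$ by $s^*$, so $\lambda_{n-r^*}(\G^*)-\lambda_n(\G^*)=\lambda_{n-r^*}(\Z^*)-\lambda_n(\Z^*)=\lambda_{n-r^*}(\Z^*)\ge\delta$, as $\Z^*$ has exactly $r^*$ zero eigenvalues.

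For the reverse direction ($\Leftarrow$), given $\G^*\in\partial g(\X^*)$ satisfying the first-order optimality condition with $\lambda_{n-r^*}(\G^*)-\lambda_n(\G^*)\ge\delta>0$, I will set $s^*:=\lambda_n(\G^*)$ and $\Z^*:=\G^*-s^*\I$. Clearly $\Z^*\succeq0$. The crucial step is identifying $\rank(\Z^*)=n-r^*$: by Lemma \ref{lemma:Xopt-subgradEigen}, the column span of $\X^*$ consists of eigenvectors of $\G^*$ with eigenvalue $\lambda_n(\G^*)$, so the multiplicity of $\lambda_n(\G^*)$ is at least $r^*$; combined with the strict gap $\lambda_{n-r^*}(\G^*)>\lambda_n(\G^*)$, this multiplicity is \emph{exactly} $r^*$. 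Hence $\Z^*$ has exactly $r^*$ zero eigenvalues, $\rank(\Z^*)=n-r^*$, and $\lambda_{n-r^*}(\Z^*)=\lambda_{n-r^*}(\G^*)-\lambda_n(\G^*)\ge\delta$.

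It remains to verify that $(\Z^*,s^*)$ is dual feasible and actually optimal. Dual feasibility $(\Z^*,s^*)\in\mathrm{dom}(q)$ follows because $\G^*=s^*\I+\Z^*\in\partial g(\X^*)$ is precisely the subgradient optimality condition $0\in\partial_\X L(\X^*,\Z^*,s^*)$, so by convexity of $L(\cdot,\Z^*,s^*)$ the minimum in the definition of $q(\Z^*,s^*)$ is attained at $\X^*$. For optimality I compute $q(\Z^*,s^*)=g(\X^*)+s^*(1-\trace(\X^*))-\langle\Z^*,\X^*\rangle$; the middle term vanishes by primal feasibility and $\langle\Z^*,\X^*\rangle=0$ because the range of $\Z^*$ is spanned by eigenvectors of $\G^*$ orthogonal to the $r^*$-dimensional bottom eigenspace that contains $\mathrm{range}(\X^*)$ (again using Lemma \ref{lemma:Xopt-subgradEigen} together with the gap pinning the multiplicity). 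Thus $q(\Z^*,s^*)=g(\X^*)$ matches the primal optimum and weak duality closes the argument.

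The main obstacle is the careful bookkeeping in the reverse direction: one must simultaneously leverage Lemma \ref{lemma:Xopt-subgradEigen} (to force the bottom $r^*$ eigenvalues of $\G^*$ to collapse to $\lambda_n(\G^*)$ and to place $\mathrm{range}(\X^*)$ in that eigenspace) and the strict positivity of $\delta$ (to prevent the multiplicity from exceeding $r^*$, thereby guaranteeing exact rank $n-r^*$). Everything else is routine manipulation of the Lagrangian and complementary slackness.
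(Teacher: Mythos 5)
Your proof is correct and follows essentially the same route as the paper's: both pass through the Lagrangian $L(\X,\Z,s)=g(\X)+s(1-\trace(\X))-\langle\Z,\X\rangle$, the identification $\G^*=\Z^*+s^*\I$ between subgradients satisfying the first-order optimality condition and dual certificates, complementary slackness to obtain $\langle\X-\X^*,\G^*\rangle\ge0$, and the eigenvalue-shift identity $\lambda_{n-r^*}(\Z^*)=\lambda_{n-r^*}(\G^*)-\lambda_{n}(\G^*)$. If anything, your reverse direction is more explicit than the paper's (which extracts both directions from Slater/KKT applied to a dual optimum): constructing $(\Z^*,s^*)=(\G^*-\lambda_n(\G^*)\I,\,\lambda_n(\G^*))$ directly, certifying its dual optimality via weak duality, and invoking Lemma \ref{lemma:Xopt-subgradEigen} together with the strict gap to pin $\rank(\Z^*)=n-r^*$ exactly is correct and welcome extra care.
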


\begin{proof}

By Slater's condition strong duality holds for Problem \eqref{nonSmoothProblem}. Therefore, the KKT conditions for Problem \eqref{nonSmoothProblem} hold for the optimal solution $\X^*$ and some optimal dual solution $(\Z^*,s^*)$. 
The Lagrangian of Problem \eqref{nonSmoothProblem} can be written as
\begin{align*}
\mathcal{L}(\X,\Z,s)=g(\X)+s(1-\trace(\X))-\langle\Z,\X\rangle.
\end{align*}
Thus, using the generalized KKT conditions for nonsmooth optimization problems (see Theorem 6.1.1 in \cite{generalizedKKT}), this implies that for the primal and dual optimal solutions
\begin{align*}
\mathbf{0}\in \partial g(\X^*)-\Z^*-s^*\I,
\\ \langle \X^*,\Z^*\rangle=0,
\\ \trace(\X^*)=1,
\\ \X^*,\Z^*\succeq0.
\end{align*}
The generalized first order optimality condition for unconstrained minimization implies that there exists some $\G^*\in \partial g(\X^*)$ for which $\mathbf{0}= \G^*-\Z^*-s^*\I$. It remains to be shown that $\langle \X-\X^*,\G^*\rangle\ge0$ for all $\X\in\mathcal{S}_n$.

The cone of positive semidefinite matrices is self-dual, that is $\Z^*\succeq0$ if and only if $\langle \X,\Z^*\rangle\ge0$ for all $\X\in\mathcal{S}_n$.
Therefore, $\Z^*\succeq0$ if and only if for all $\X\in\mathcal{S}_n$ it holds that
\begin{align*}
0\le \langle \X,\Z^*\rangle & = \langle \X,\Z^*\rangle-\langle \X^*,\Z^*\rangle+s^*\langle \X-\X^*,\I\rangle
\\ & = \langle \X-\X^*,\Z^*+s^*\I\rangle
 = \langle \X-\X^*,\G^*\rangle
\end{align*}
as desired. The first equality holds using the complementarity condition and the property that $\trace(\X)=\trace(\X^*)=1$.

Using the equality $\G^*=\Z^*+s^*\I$ it holds that 
\begin{align*}%\begin{myproof}
\lambda_{n-r^*}(\Z^*) & = \lambda_{n-r^*}(\Z^*)+s^*\I-\lambda_{n}(\Z^*)-s^*\I = \lambda_{n-r^*}(\Z^*+s^*\I)-\lambda_{n}(\Z^*+s^*\I)
\\ & =\lambda_{n-r^*}(\G^*)-\lambda_{n}(\G^*).
\end{align*}
Thus, $\X^*$ satisfies the strict complementarity assumption with parameter $\delta>0$, i.e., $\lambda_{n-r^*}(\Z^*)\ge\delta$, if and only if $\lambda_{n-r^*}(\G^*)-\lambda_{n}(\G^*)\ge\delta$.

\end{proof}

\subsection{Proof of Lemma \ref{lemma:negativeExampleNonSmooth}}
\label{sec:appendix:proofLemma4}
We first restate the lemma and then prove it.

\begin{lemma}%[failure of low-rank subgradient descent on sparse PCA] 
Consider the problem
\begin{align*}
\min_{\X\in\Sn}\{g(\X):=-\left\langle\z\z^{\top}+\z_{\perp}\z_{\perp}^{\top},\X\right\rangle+\frac{1}{2k}\Vert\X\Vert_1\},
\end{align*}
where $\z=(1/\sqrt{k},\ldots,1/\sqrt{k},0,\ldots,0)^{\top}$ is supported on the first $k$ entries,  $\z_{\perp}=(0,\ldots,0,\\ 1/\sqrt{n-k},\ldots,1/\sqrt{n-k})^{\top}$ is supported on the last $n-k$ entries, and $k\le n/4$. Then, $\z\z^{\top}$ is a rank-one optimal solution for which  strict complementarity holds.
However, for any $\eta<\frac{2}{3}$ and any $\v\in\reals^n$ such that $\Vert\v\Vert=1$, $\support(\v)\subseteq\support(\z)$, and $\langle\z,\v\rangle^2=1 - \frac{1}{2}\Vert{\v\v^{\top}-\z\z^{\top}}\Vert_F^2 \ge1-\frac{1}{2k^2}$, it holds that
\begin{align*}
\rank\left(\Pi_{\Sn}[\v\v^{\top}-\eta\G_{\v\v^{\top}}]\right)>1,
\end{align*}
where $\G_{\v\v^{\top}}=-\z\z^{\top}-\z_{\perp}\z_{\perp}^{\top}+\frac{1}{2k}\sign(\v\v^{\top})\in\partial g(\v\v^{\top})$.
\end{lemma}

\begin{proof}
$\z\z^{\top}$ is a rank-one optimal solution for this problem since for the subgradient $k\z\z^{\top}+2k\z_{\perp}\z_{\perp}^{\top}\in\partial \left(\Vert\z\z^{\top}\Vert_1\right)$ the first-order optimality condition holds. Indeed, for all $\X\in\Sn$ 
\begin{align} \label{eq:ExampleFirstOrderOptCondHolds}
\langle\X-\z\z^{\top},-\z\z^{\top}-\z_{\perp}\z_{\perp}^{\top}+\frac{1}{2}\z\z^{\top}+\z_{\perp}\z_{\perp}^{\top}\rangle = \langle\X-\z\z^{\top},-\frac{1}{2}\z\z^{\top}\rangle\ge0.
\end{align}

For the subgradient $-\frac{1}{2}\z\z^{\top}\in\partial g(\z\z^{\top})$ there is a gap $\lambda_{n-1}(-\frac{1}{2}\z\z^{\top})-\lambda_{n}(-\frac{1}{2}\z\z^{\top})=\frac{1}{2}>0$, and as we showed in \eqref{eq:ExampleFirstOrderOptCondHolds} the first order optimality condition holds for $-\frac{1}{2}\z\z^{\top}$. Thus, by Lemma \ref{lemma:kkt} the optimal solution $\z\z^{\top}$ satisfies standard strict complementarity. 

We will show that the projection onto the spectrahedron of a subgradient step from $\v\v^{\top}$ with respect to the natural subgradient of the $\ell_1$-norm $\sign(\v\v^{\top})\in\partial \left(\Vert\v\v^{\top}\Vert_1\right)$ returns a rank-2 solution.

It holds that
\begin{align*}
1-\frac{1}{2k^2}& \le\langle\z\z^{\top},\v\v^{\top}\rangle=\frac{1}{2}\left(\Vert\z\z^{\top}\Vert_F^2+\Vert\v\v^{\top}\Vert_F^2-\Vert\v\v^{\top}-\z\z^{\top}\Vert_F^2\right)
\\ &  = 1- \frac{1}{2}\Vert\v\v^{\top}-\z\z^{\top}\Vert_F^2,
\end{align*}
and equivalently
\begin{align*}
\sum_{i=1}^k\sum_{j=1}^k\left(\frac{1}{k}-(\v\v^{\top})_{ij}\right)^2=\Vert\v\v^{\top}-\z\z^{\top}\Vert_F^2\le\frac{1}{k^2}.
\end{align*}
Therefore, for every $i,j\in\lbrace 1,\ldots,k\rbrace$ it holds that
\begin{align*}
\left\vert(\v\v^{\top})_{ij}-\frac{1}{k}\right\vert\le\frac{1}{k},
\end{align*}
which implies that $0\le(\v\v^{\top})_{ij}\le\frac{2}{k}$. Therefore, $k\z\z^{\top}=\sign(\v\v^{\top})\in\partial \left(\Vert\v\v^{\top}\Vert_1\right)$.

Taking a projected subgradient step from $\v\v^{\top}$ with respect to the subgradient $-\z\z^{\top}-\z_{\perp}\z_{\perp}^{\top}+\frac{1}{2}\z\z^{\top}\in\partial g(\v\v^{\top})$ has the form
\begin{align*}
&\Pi_{\Sn}\left[\v\v^{\top}-\eta\left(-\z\z^{\top}-\z_{\perp}\z_{\perp}^{\top}+\frac{1}{2}\z\z^{\top}\right)\right]
 = \Pi_{\Sn}\left[\v\v^{\top}+\frac{\eta}{2}\z\z^{\top}+\eta\z_{\perp}\z_{\perp}^{\top}\right].
% \\ &  = \Pi_{\Sn}\left[\lambda_1\v_1\v_1^{\top}+\lambda_2\v_2\v_2^{\top}+\eta\z_{\perp}\z_{\perp}^{\top}\right].
\end{align*}

Since $\support(\v)\subseteq\support(\z)$ it holds that $\left(\v\v^{\top}+\frac{\eta}{2}\z\z^{\top}\right)\perp\z_{\perp}\z_{\perp}^{\top}$. In addition,
$\v\v^{\top}+\frac{\eta}{2}\z\z^{\top}$ is a rank-2 matrix and so we can denote the eigen-decomposition of $\v\v^{\top}+\frac{\eta}{2}\z\z^{\top}+\eta\z_{\perp}\z_{\perp}^{\top}$ as $\v\v^{\top}+\frac{\eta}{2}\z\z^{\top}+\eta\z_{\perp}\z_{\perp}^{\top}=\lambda_1\v_1\v_1^{\top}+\lambda_2\v_2\v_2^{\top}+\eta\z_{\perp}\z_{\perp}^{\top}$, where $\lambda_1\ge\lambda_2$. 
Thus, invoking \eqref{eq:euclidProj} to calculate the projection we need to find the scalar $\lambda\in\reals$ for which the following holds.
\begin{align*}
\max\left\lbrace\lambda_1-\lambda,0\right\rbrace+\max\left\lbrace\lambda_2-\lambda,0\right\rbrace+\max\lbrace\eta-\lambda,0\rbrace
+\sum_{i=4}^n\max \lbrace0-\lambda,0\rbrace = 1.
\end{align*}

$\lambda_1$ is the largest eigenvalue of $\v\v^{\top}+\frac{\eta}{2}\z\z^{\top}+\eta\z_{\perp}\z_{\perp}^{\top}$ since, under our assumption that $\eta < 2/3$, we have that
\[\lambda_1\ge \frac{1}{2}(\lambda_1+\lambda_2)=\frac{1}{2}\trace(\v\v^{\top}+\frac{\eta}{2}\z\z^{\top})=\frac{1}{2}+\frac{\eta}{4}>\eta.\]
Therefore, $\lambda<\lambda_1\le \lambda_1+\lambda_2=\trace(\v\v^{\top}+\frac{\eta}{2}\z\z^{\top})=1+\frac{\eta}{2}$.

In addition, $\max\lbrace\lambda_2,\eta\rbrace\ge\eta$. Therefore, 
\[\lambda_1-\max\lbrace\lambda_2,\eta\rbrace \le \lambda_1 + \lambda_2 - \eta = 1+\frac{\eta}{2}-\eta <1,\]
and so we must have that $\lambda<\max\lbrace\lambda_2,\eta\rbrace\le\lambda_1$. 

This implies that both $\max\left\lbrace\lambda_1-\lambda,0\right\rbrace>0$
and $\max\lbrace\max\lbrace\lambda_2,\eta\rbrace-\lambda,0\rbrace>0$. Thus, using  \eqref{eq:euclidProj} we conclude that 
\[\rank\left(\Pi_{\Sn}\left[\v\v^{\top}-\eta\left(-\z\z^{\top}-\z_{\perp}\z_{\perp}^{\top}+\frac{1}{2}\z\z^{\top}\right)\right]\right)\ge2.\]

\end{proof}

\section{Proofs Omitted from Section \ref{sec:smooth2Saddle}}

\subsection{Proof of Lemma \ref{lemma:connectionSubgradientNonSmoothAndSaddlePoint}}\label{sec:proofSubgradEquiv}
We first restate the lemma and then prove it.
\begin{lemma} 
If $(\X^*,\y^*)$ is a saddle-point of Problem \eqref{problem1} then $\X^*$ is an optimal solution to Problem \eqref{nonSmoothProblem},  $\nabla_{\X}f(\X^*,\y^*)\in\partial g(\X^*)$,  and for all $\X\in\Sn$ it holds that $\langle\X-\X^*,\nabla_{\X}f(\X^*,\y^*)\rangle\ge0$.
Conversely, under Assumption \ref{ass:struct}, if $\X^*$ is an optimal solution to Problem \eqref{nonSmoothProblem}, and $\G^*\in\partial{}g(\X^*)$ which satisfies $\langle\X-\X^*,\G^*\rangle\ge0$  for all $\X\in\Sn$, then there exists $\y^*\in\argmax_{\y\in\mathcal{K}}f(\X^*,\y)$ such that $(\X^*,\y^*)$ is a saddle-point of Problem \eqref{problem1}, and $\nabla_{\X}f(\X^*,\y^*)=\G^*$.
\end{lemma}

\begin{proof}

For the first direction of the lemma, we first observe that for any $\X_1,\X_2\in\Sn$ and $\widetilde{\y}_1\in \argmax_{\y\in\mathcal{K}}f(\X_1,\y)$, $\widetilde{\y}_2\in \argmax_{\y\in\mathcal{K}}f(\X_2,\y)$, using the gradient inequality for $f(\cdot,\widetilde{\y}_2)$, it holds that
\begin{align*}
g(\X_1) & = f(\X_1,\widetilde{\y}_1) \ge f(\X_1,\widetilde{\y}_2) \ge f(\X_2,\widetilde{\y}_2) + \langle\nabla_{\X}f(\X_2,\widetilde{\y}_2), \X_1-\X_2\rangle 
\\ & = g(\X_2)+\langle\nabla_{\X}f(\X_2,\widetilde{\y}_2), \X_1-\X_2\rangle.
\end{align*}
Thus,  $\nabla_{\X}f(\X_2,\widetilde{\y}_2)$ is a subgradient of $g(\cdot)$ at $\X_2$, i.e., $\nabla_{\X}f(\X_2,\widetilde{\y}_2)\in\partial g(\X_2)$.

In particular, for a saddle-point $(\X^*,\y^*)\in{\Sn\times\mathcal{K}}$ it holds that $\y^*\in \argmax_{\y\in\mathcal{K}}f(\X^*,\y)$, and therefore, it follows that $\nabla_{\X}f(\X^*,\y^*)\in\partial g(\X^*)$. In addition, for all $\X\in\Sn$ and $\widetilde{\y}\in \argmax_{\y\in\mathcal{K}}f(\X,\y)$ we have
\[ g(\X^*)=f(\X^*,\y^*)\le f(\X,\y^*)\le f(\X,\widetilde{\y})=g(\X),\] which implies that $\X^*$ is an optimal solution to $\min_{\X\in\Sn}g(\X)$. 

Finally, we need to show that the subgradient $\nabla_{\X}f(\X^*,\y^*)\in\partial g(\X^*)$ indeed satisfies the first-order optimality condition for $g(\cdot)$ at $\X^*$. To see this, we observe that since $\X^*$ is an optimal solution to $\min_{\X\in\Sn}f(\X,\y^*)$, it follows from the first-order optimality condition for the problem $\min_{\X\in\mS_n}f(\X,\y^*)$, that for all $\W\in\Sn$
\begin{align*}
\langle \W-\X^*,\nabla_{\X}f(\X^*,\y^*)\rangle\ge0,
\end{align*}
as needed.

For the second direction, let $\X^*\in\argmin_{\X\in\Sn} g(\X)$ and let $\G^*\in\partial g(\X^*)$ such that $\langle \X-\X^*,\G^*\rangle\ge0$ for all $\X\in\mS_n$. By Assumption \ref{ass:struct} and using Danskin's theorem (see for instance \cite{DanskinTheorem}), the subdifferential set of $g(\X^*) = h(\X^*) + \max_{\y\in\mK}\y^{\top}(\mA(\X^*)-\b)$ can be written as
\begin{align*}
\partial g(\X^*) & = \nabla{}h(\X^*)+\textrm{conv}\left\lbrace\mA^{\top}(\y)\ \Big\vert\ \y\in\argmax_{\y\in\mathcal{K}}\y^{\top}(\mA(\X^*)-\b)\right\rbrace
\\ & = \nabla{}h(\X^*)+\mA^{\top}\left(\textrm{conv}\left\lbrace\y\ \Big\vert\ \y\in\argmax_{\y\in\mathcal{K}}\y^{\top}(\mA(\X^*)-\b)\right\rbrace\right)
\\ & = \nabla{}h(\X^*)+\mA^{\top}\left(\left\lbrace\y\ \Big\vert\ \y\in\argmax_{\y\in\mathcal{K}}\y^{\top}(\mA(\X^*)-\b)\right\rbrace\right)
\\ & = \nabla{}h(\X^*)+\mA^{\top}\left(\left\lbrace\y\ \Big\vert\ \y\in\argmax_{\y\in\mathcal{K}}f(\X^*,\y)\right\rbrace\right),
\end{align*}
where $\textrm{conv}\lbrace\cdot\rbrace$ denotes the convex hull operation and the third equality follows from the convexity of $\mathcal{K}$.

Thus, there exists some $\y^*\in\argmax_{\y\in\mathcal{K}}f(\X^*,\y)$ such that $\G^*=\nabla{}h(\X^*)+\mA^{\top}(\y^*)=\nabla_{\X}f(\X^*,\y^*)$.

Since $\y^*\in \argmax_{\y\in\mathcal{K}}f(\X^*,\y)$, it follows that for all $\y\in\mathcal{K}$, $f(\X^*,\y^*)\ge f(\X^*,\y)$. In addition, using the fact that $\G^*$ satisfies the first-order optimality condition, and using gradient inequality w.r.t. $f(\cdot,\y^*)$, we have that for all $\X\in\Sn$,
\begin{align*}
0\le\langle \X-\X^*,\G^*\rangle=\langle \X-\X^*,\nabla_{\X}f(\X^*,\y^*)\rangle\le f(\X,\y^*)-f(\X^*,\y^*).
\end{align*}
Thus, it follows that $f(\X,\y^*)\ge f(\X^*,\y^*)$. Therefore, $(\X^*,\y^*)$ is indeed a saddle-point of $f$.

\end{proof}

\section{Proofs Omitted from Section \ref{sec:approximatedMP}}
\label{sec:appendix:OmittedApproximatedMP}

\subsection{Proof of Lemma \ref{lemma:convergenceApproxMethod}}
\label{sec:appendix:proofLemma6}
We first restate the lemma and then prove it.

\begin{lemma}
Let $\lbrace(\X_t,\y_t)\rbrace_{t\ge1}$, $\lbrace(\Z_{t},\w_t)\rbrace_{t\ge2}$, $\lbrace\widehat{\X}_t\rbrace_{t\ge2}$, and $\lbrace\widehat{\Z}_{t}\rbrace_{t\ge2}$ be the sequences generated by Algorithm \ref{alg:ApproxMP} with a fixed step-size $\eta_t=\eta\le\min\left\lbrace\frac{1}{\beta_{X}+\beta_{Xy}},\frac{1}{(1+\theta)(\beta_{X}+\beta_{yX})},\frac{1}{\beta_{y}+\beta_{yX}},\frac{1}{\beta_{y}+\beta_{Xy}}\right\rbrace$, such that $\theta=\bigg\lbrace\begin{array}{ll} 0 & if\ \Z_{t+1}=\widehat{\Z}_{t+1}
\\ 1 & otherwise \end{array}$. Then,
%\textcolor{red}{
\begin{align} \label{ineq:convergenceRateAppend}
& \max_{\y\in\mathcal{K}} f\left(\frac{1}{T}\sum_{t=1}^T \Z_{t+1},\y\right) -  \min_{\X\in\Sn} f\left(\X,\frac{1}{T}\sum_{t=1}^T\w_{t+1}\right) \nonumber
\\ & \le \frac{D^2}{\eta T}
+ \frac{1}{\eta T}\sum_{t=1}^T\max_{\X\in\Sn}\left(\breg_{\X}(\X,\X_{t+1})-\breg_{\X}(\X,\widehat{\X}_{t+1})\right)
 \nonumber
\\ & \ \ \ + \frac{2(\beta_{X}+\beta_{yX})}{T}\sum_{t=1}^T\breg_{\X}(\widehat{\Z}_{t+1},\Z_{t+1})+\frac{\sqrt{2}G}{T}\sum_{t=1}^T\sqrt{\breg_{\X}(\widehat{\Z}_{t+1},\Z_{t+1})},
\end{align}
where $D^2:=\sup_{(\X,\y)\in{\Sn\times\mathcal{K}}}\left(\breg_{\X}(\X,\X_1)+\breg_{\y}(\y,\y_1)\right)$ and  $G=\sup_{(\X,\y)\in{\Sn\times\mathcal{K}}}\Vert\nabla_{\X}f(\X,\y)\Vert_{\mathcal{X^*}}$.
%Alternatively, it holds that
%$$\max_{\y\in\mathcal{K}} f\left(\frac{1}{T}\sum_{t=1}^T \Z_{t+1},\y\right) -  \min_{\X\in\Sn} f\left(\X,\frac{1}{T}\sum_{t=1}^T\w_{t+1}\right) \le \textrm{RHS of } \eqref{ineq:convergenceRateAppend}.$$
%}
\end{lemma}

\begin{proof}

By the gradient inequality, for any $\X\in\Sn$ it holds that
\begin{align} \label{ineq:convergenceX}
& f(\Z_{t+1},\w_{t+1})-f(\X,\w_{t+1}) \nonumber
\\ & \le \langle \Z_{t+1}-\X,\nabla_{\X}f(\Z_{t+1},\w_{t+1})\rangle \nonumber
\\ & = \langle \widehat{\Z}_{t+1}-\X,\nabla_{\X}f(\Z_{t+1},\w_{t+1})\rangle + \langle\Z_{t+1}-\widehat{\Z}_{t+1},\nabla_{\X}f(\Z_{t+1},\w_{t+1})\rangle \nonumber
\\ & = \langle\widehat{\X}_{t+1}-\X,\nabla_{\X}f(\Z_{t+1},\w_{t+1})\rangle + \langle\widehat{\Z}_{t+1}-\widehat{\X}_{t+1},\nabla_{\X}f(\X_{t},\y_{t})\rangle \nonumber
\\ & \ \ \ + \langle\widehat{\Z}_{t+1}-\widehat{\X}_{t+1},\nabla_{\X}f(\Z_{t+1},\w_{t+1})-\nabla_{\X}f(\X_{t},\y_{t}) \rangle \nonumber
\\ & \ \ \ + \langle\Z_{t+1}-\widehat{\Z}_{t+1},\nabla_{\X}f(\Z_{t+1},\w_{t+1})\rangle .
\end{align}
We will bound the four terms in the RHS of \eqref{ineq:convergenceX} separately.

For the first term, by the definition of $\widehat{\X}_{t+1}$ in Algorithm \ref{alg:ApproxMP},
\begin{align*}
\widehat{\X}_{t+1} & = \argmin_{\X\in\Sn}\lbrace\langle\eta_t\nabla_{\X}f(\Z_{t+1},\w_{t+1}),\X\rangle+\breg_{\X}(\X,\X_t)\rbrace
\\ & = \argmin_{\X\in\mathcal{S}_n}\{\langle\eta_t\nabla_{\X} f(\Z_{t+1},\w_{t+1})-\nabla\omega_{\X}(\X_t),\X\rangle+\omega_{\X}(\X)\}.
\end{align*}

Therefore, by the optimality condition for $\widehat{\X}_{t+1}$, $\forall \X\in\mathcal{S}_n$, 
\[ \langle \eta_t\nabla_{\X} f(\Z_{t+1},\w_{t+1})-\nabla\omega_{\X}(\X_t)+\nabla\omega_{\X}(\widehat{\X}_{t+1}),\X-\widehat{\X}_{t+1}\rangle\ge0. \]

Rearranging and using the three point lemma in \eqref{lemma:threePointLemma}, we obtain that $\forall \X\in\mathcal{S}_n$, 
\begin{align} \label{ineq:convergenceTerm1}
\eta_t\langle\widehat{\X}_{t+1}-\X,\nabla_{\X} f(\Z_{t+1},\w_{t+1})\rangle & \le \langle\nabla\omega_{\X}(\X_t)-\nabla\omega_{\X}(\widehat{\X}_{t+1})),\widehat{\X}_{t+1}-\X\rangle \nonumber 
\\ & = \breg_{\X}(\X,\X_t)-\breg_{\X}(\X,\widehat{\X}_{t+1})-\breg_{\X}(\widehat{\X}_{t+1},\X_t).
\end{align}

For the second term in the RHS of \eqref{ineq:convergenceX}, by the definition of $\widehat{\Z}_{t+1}$ in Algorithm \ref{alg:ApproxMP},
\begin{align*}
\widehat{\Z}_{t+1}=\argmin_{\X\in\mathcal{S}_n}\{\langle\eta_t\nabla_{\X} f(\X_t,\y_t)-\nabla\omega_{\X}(\X_t),\X\rangle+\omega_{\X}(\X)\}.
\end{align*}

Therefore, by the optimality condition for $\widehat{\Z}_{t+1}$, $\forall \X\in\mathcal{S}_n$, 
\[ \langle \eta_t\nabla_{\X} f(\X_t,\y_t)-\nabla\omega_{\X}(\X_t)+\nabla\omega_{\X}(\widehat{\Z}_{t+1}),\X-\widehat{\Z}_{t+1}\rangle\ge0. \]

Rearranging and using the three point lemma in \eqref{lemma:threePointLemma}, we obtain by plugging in $\X=\widehat{\X}_{t+1}$, 
\begin{align} \label{ineq:convergenceTerm2}
\eta_t\langle\widehat{\Z}_{t+1}-\widehat{\X}_{t+1},\nabla_{\X} f(\X_t,\y_t)\rangle & \le \langle\nabla\omega_{\X}(\X_t)-\nabla\omega_{\X}(\widehat{\Z}_{t+1})),\widehat{\Z}_{t+1}-\widehat{\X}_{t+1}\rangle \nonumber 
\\ & = \breg_{\X}(\widehat{\X}_{t+1},\X_t)-\breg_{\X}(\widehat{\X}_{t+1},\widehat{\Z}_{t+1})-\breg_{\X}(\widehat{\Z}_{t+1},\X_t).
\end{align}

For the third term in the RHS of \eqref{ineq:convergenceX}, we obtain
\begin{align} \label{ineq:convergenceTerm3}
& \langle\widehat{\Z}_{t+1}-\widehat{\X}_{t+1},\nabla_{\X}f(\Z_{t+1},\w_{t+1})-\nabla_{\X}f(\X_{t},\y_{t}) \rangle \nonumber
\\ & \underset{(a)}{\le} \Vert\widehat{\Z}_{t+1}-\widehat{\X}_{t+1}\Vert_{\mathcal{X}}\cdot\Vert\nabla_{\X}f(\Z_{t+1},\w_{t+1})-\nabla_{\X}f(\X_{t},\y_{t})\Vert_2 \nonumber
\\ & \le \Vert\widehat{\Z}_{t+1}-\widehat{\X}_{t+1}\Vert_{\mathcal{X}}\cdot\Vert\nabla_{\X}f(\Z_{t+1},\w_{t+1})-\nabla_{\X}f(\X_{t},\w_{t+1})\Vert_2 \nonumber
\\ &  \ \ \ +\Vert\widehat{\Z}_{t+1}-\widehat{\X}_{t+1}\Vert_{\mathcal{X}}\cdot\Vert\nabla_{\X}f(\X_{t},\w_{t+1})-\nabla_{\X}f(\X_{t},\y_{t})\Vert_2 \nonumber
\\ & \underset{(b)}{\le} \left(\beta_{X}\Vert\Z_{t+1}-\X_{t}\Vert_{\mathcal{X}}+\beta_{Xy}\Vert\w_{t+1}-\y_{t}\Vert_{\mathcal{Y}}\right)\cdot\Vert\widehat{\Z}_{t+1}-\widehat{\X}_{t+1}\Vert_{\mathcal{X}} \nonumber
\\ & \le \left(\beta_{X}\Vert\widehat{\Z}_{t+1}-\X_{t}\Vert_{\mathcal{X}}+\beta_{X}\Vert\Z_{t+1}-\widehat{\Z}_{t+1}\Vert_{\mathcal{X}}+\beta_{Xy}\Vert\w_{t+1}-\y_{t}\Vert_{\mathcal{Y}}\right)\cdot\Vert\widehat{\Z}_{t+1}-\widehat{\X}_{t+1}\Vert_{\mathcal{X}} \nonumber
\\ & \underset{(c)}{\le} \frac{\beta_{X}}{2}\left((1+\theta)\Vert\widehat{\Z}_{t+1}-\X_{t}\Vert_{\mathcal{X}}^2+2\Vert\Z_{t+1}-\widehat{\Z}_{t+1}\Vert_{\mathcal{X}}^2+\Vert\widehat{\Z}_{t+1}-\widehat{\X}_{t+1}\Vert_{\mathcal{X}}^2\right) \nonumber
\\ & \ \ \ +\frac{\beta_{Xy}}{2}\left(\Vert\w_{t+1}-\y_{t}\Vert_{\mathcal{Y}}^2+\Vert\widehat{\Z}_{t+1}-\widehat{\X}_{t+1}\Vert_{\mathcal{X}}^2\right) \nonumber
\\ & \underset{(d)}{\le} \beta_{X}\left((1+\theta)\breg_{\X}(\widehat{\Z}_{t+1},\X_t)+2\breg_{\X}(\widehat{\Z}_{t+1},\Z_{t+1})+\breg_{\X}(\widehat{\X}_{t+1},\widehat{\Z}_{t+1})\right) \nonumber
\\ & \ \ \ +\beta_{Xy}\left(\breg_{\y}(\w_{t+1},\y_t)+\breg_{\X}(\widehat{\X}_{t+1},\widehat{\Z}_{t+1})\right),
\end{align}
where (a) follows from H\"{o}lder's inequality, (b) follows from the $\beta_{X}$ and $\beta_{Xy}$ smoothness, (c) follows since for all $a,b\in\reals$ the inequality $2ab\le a^2+b^2$ holds, and (d) follows from \eqref{ineq:strongConvexityOfBregmanDistance}.

For the fourth term in the RHS of \eqref{ineq:convergenceX}, using H\"{o}lder's inequality and \eqref{ineq:strongConvexityOfBregmanDistance}, we obtain
\begin{align} \label{ineq:convergenceTerm4}  
\langle\Z_{t+1}-\widehat{\Z}_{t+1},\nabla_{\X}f(\Z_{t+1},\w_{t+1})\rangle & 
\le \Vert\Z_{t+1}-\widehat{\Z}_{t+1}\Vert_{\mathcal{X}}\cdot\Vert\nabla_{\X}f(\Z_{t+1},\w_{t+1})\Vert_{\mathcal{X^*}} \nonumber
\\ & \le \sqrt{2}\Vert\nabla_{\X}f(\Z_{t+1},\w_{t+1})\Vert_{\mathcal{X^*}}\sqrt{\breg_{\X}(\widehat{\Z}_{t+1},\Z_{t+1})}.
\end{align}

Plugging \eqref{ineq:convergenceTerm1}, \eqref{ineq:convergenceTerm2}, \eqref{ineq:convergenceTerm3}, and \eqref{ineq:convergenceTerm4} into \eqref{ineq:convergenceX} we obtain 
\begin{align} \label{ineq:boundXseiries}
& f(\Z_{t+1},\w_{t+1})-f(\X,\w_{t+1}) \nonumber
\\ & \le \frac{1}{\eta_t}\left(\breg_{\X}(\X,\X_t)-\breg_{\X}(\X,\widehat{\X}_{t+1})\right) +\left((1+\theta)\beta_{X}-\frac{1}{\eta_t}\right)\breg_{\X}(\widehat{\Z}_{t+1},\X_t) \nonumber
\\ & \ \ \ +\left((\beta_{X}+\beta_{Xy})-\frac{1}{\eta_t}\right)\breg(\widehat{\X}_{t+1},\widehat{\Z}_{t+1})+\beta_{Xy}\breg_{\y}(\w_{t+1},\y_t) \nonumber
\\ & \ \ \ +2\beta_{X}\breg_{\X}(\widehat{\Z}_{t+1},\Z_{t+1})+\sqrt{2}\Vert\nabla_{\X}f(\Z_{t+1},\w_{t+1})\Vert_{\mathcal{X^*}}\sqrt{\breg_{\X}(\widehat{\Z}_{t+1},\Z_{t+1})}.
\end{align}

Similarly, by the gradient inequality, for any $\y\in\mathcal{K}$ it holds that
\begin{align} \label{ineq:convergenceY}
& f(\Z_{t+1},\y) - f(\Z_{t+1},\w_{t+1}) \nonumber
\\ & \le \langle \w_{t+1}-\y,-\nabla_{\y}f(\Z_{t+1},\w_{t+1})\rangle \nonumber
\\ & = \langle \y_{t+1}-\y,-\nabla_{\y}f(\Z_{t+1},\w_{t+1})\rangle + \langle \w_{t+1}-\y_{t+1},-\nabla_{\y}f(\X_{t},\y_{t})\rangle \nonumber
\\ & \ \ \ + \langle \w_{t+1}-\y_{t+1},\nabla_{\y}f(\X_{t},\y_{t})-\nabla_{\y}f(\Z_{t+1},\w_{t+1})\rangle.
\end{align}
We will bound the three terms in the RHS of \eqref{ineq:convergenceY} separately.

For the first term, by the definition of $\y_{t+1}$ in Algorithm \ref{alg:ApproxMP}, we have that
\begin{align*}
\y_{t+1}=\argmin_{\y\in\mathcal{K}}\{\langle-\eta_t\nabla_{\y} f(\Z_{t+1},\w_{t+1})-\nabla\omega_{\y}(\y_t),\y\rangle+\omega_{\y}(\y)\}.
\end{align*}

Therefore, by the optimality condition for $\y_{t+1}$, $\forall \y\in\mathcal{K}$,
\begin{align*}
\langle -\eta_t\nabla_{\y} f(\Z_{t+1},\w_{t+1})-\nabla\omega_{\y}(\y_t)+\nabla\omega_{\y}(\y_{t+1}),\y-\y_{t+1}\rangle\ge0.
\end{align*}

Rearranging and using the three point lemma in \eqref{lemma:threePointLemma}, we obtain that $\forall \y\in\mathcal{K}$,
\begin{align} \label{ineq:convergenceTerm1y}
\eta_t\langle \y_{t+1}-\y,-\nabla_{\y}f(\Z_{t+1},\w_{t+1})\rangle & \le \langle\nabla\omega_{\y}(\y_t)-\nabla\omega_{\y}(\y_{t+1})),\y_{t+1}-\y\rangle \nonumber 
\\ & = \breg_{\y}(\y,\y_t)-\breg_{\y}(\y,\y_{t+1})-\breg_{\y}(\y_{t+1},\y_t).
\end{align}

For the second term in the RHS of \eqref{ineq:convergenceY}, by the definition of $\w_{t+1}$ in Algorithm \ref{alg:ApproxMP},
\begin{align*}
\w_{t+1}=\argmin_{\y\in\mathcal{K}}\{\langle-\eta_t\nabla_{\y} f(\X_t,\y_t)-\nabla\omega_{\y}(\y_t),\y\rangle+\omega_{\y}(\y)\}.
\end{align*}

Therefore, by the optimality condition for $\w_{t+1}$, $\forall \y\in\mathcal{K}$, 
\[ \langle -\eta_t\nabla_{\y} f(\X_t,\y_t)-\nabla\omega_{\y}(\y_t)+\nabla\omega_{\y}(\w_{t+1}),\y-\w_{t+1}\rangle\ge0. \]

Rearranging and using the three point lemma in \eqref{lemma:threePointLemma}, we obtain that by plugging in $\y=\y_{t+1}$, 
\begin{align} \label{ineq:convergenceTerm2y}
\eta_t\langle\w_{t+1}-\y_{t+1},-\nabla_{\y} f(\X_t,\y_t)\rangle & \le \langle\nabla\omega_{\y}(\y_t)-\nabla\omega_{\y}(\w_{t+1})),\w_{t+1}-\y_{t+1}\rangle \nonumber 
\\ & = \breg_{\y}(\y_{t+1},\y_t)-\breg_{\y}(\y_{t+1},\w_{t+1})-\breg_{\y}(\w_{t+1},\y_t).
\end{align}

For the third term in the RHS of \eqref{ineq:convergenceY}, we obtain
\begin{align} \label{ineq:convergenceTerm3}
& \langle\w_{t+1}-\y_{t+1},\nabla_{\y}f(\X_{t},\y_{t})-\nabla_{\y}f(\Z_{t+1},\w_{t+1}) \rangle \nonumber
\\ & \underset{(a)}{\le} \Vert\w_{t+1}-\y_{t+1}\Vert_{\mathcal{Y}}\cdot\Vert\nabla_{\y}f(\Z_{t+1},\w_{t+1})-\nabla_{\y}f(\X_{t},\y_{t})\Vert_{\mathcal{Y^*}} \nonumber
\\ & \le \Vert\w_{t+1}-\y_{t+1}\Vert_{\mathcal{Y}}\cdot\Vert\nabla_{\y}f(\Z_{t+1},\w_{t+1})-\nabla_{\y}f(\X_{t},\w_{t+1})\Vert_{\mathcal{Y^*}} \nonumber 
\\ & \ \ \ +\Vert\w_{t+1}-\y_{t+1}\Vert_{\mathcal{Y}}\cdot\Vert\nabla_{\y}f(\X_{t},\w_{t+1})-\nabla_{\y}f(\X_{t},\y_{t})\Vert_{\mathcal{Y^*}} \nonumber
\\ & \underset{(b)}{\le} \left(\beta_{y}\Vert\w_{t+1}-\y_{t}\Vert_{\mathcal{Y}}+\beta_{yX}\Vert\Z_{t+1}-\X_{t}\Vert_{\mathcal{X}}\right)\cdot\Vert\w_{t+1}-\y_{t+1}\Vert_{\mathcal{Y}} \nonumber
\\ & \le \left(\beta_{y}\Vert\w_{t+1}-\y_{t}\Vert_{\mathcal{Y}}+\beta_{yX}\Vert\widehat{\Z}_{t+1}-\X_{t}\Vert_{\mathcal{X}}+\beta_{yX}\Vert\Z_{t+1}-\widehat{\Z}_{t+1}\Vert_{\mathcal{X}}\right)\cdot\Vert\w_{t+1}-\y_{t+1}\Vert_{\mathcal{Y}} \nonumber
\\ & \underset{(c)}{\le} \frac{\beta_{y}}{2}\left(\Vert\w_{t+1}-\y_{t}\Vert_{\mathcal{Y}}^2+\Vert\w_{t+1}-\y_{t+1}\Vert_{\mathcal{Y}}^2\right) \nonumber
\\ & \ \ \ +\frac{\beta_{yX}}{2}\left(\Vert\w_{t+1}-\y_{t+1}\Vert_{\mathcal{Y}}^2+(1+\theta)\Vert\widehat{\Z}_{t+1}-\X_{t}\Vert_{\mathcal{X}}^2+2\Vert\widehat{\Z}_{t+1}-\Z_{t+1}\Vert_{\mathcal{X}}^2\right) \nonumber
\\ & \underset{(d)}{\le} \beta_{y}\left(\breg_{\y}(\w_{t+1},\y_t)+\breg_{\y}(\y_{t+1},\w_{t+1})\right)+\beta_{yX}\breg_{\y}(\y_{t+1},\w_{t+1}) \nonumber
\\ & \ \ \ +\beta_{yX}\left((1+\theta)\breg_{\X}(\widehat{\Z}_{t+1},\X_t)+2\breg_{\X}(\widehat{\Z}_{t+1},\Z_{t+1})\right),
\end{align}
where (a) follows from H\"{o}lder's inequality, (b) follows from the $\beta_{y}$ and $\beta_{yX}$ smoothness, (c) follows since for all $a,b\in\reals$ the inequality $2ab\le a^2+b^2$ holds, and (d) follows from \eqref{ineq:strongConvexityOfBregmanDistance}.

Plugging \eqref{ineq:convergenceTerm1}, \eqref{ineq:convergenceTerm2}, and \eqref{ineq:convergenceTerm3} into \eqref{ineq:convergenceY} we obtain
\begin{align} \label{ineq:boundYseiries}
f(\Z_{t+1},\y) - f(\Z_{t+1},\w_{t+1}) & \le \frac{1}{\eta_t}\left(\breg_{\y}(\y,\y_{t})-\breg_{\y}(\y,\y_{t+1})\right) +\left(\beta_{y}-\frac{1}{\eta_t}\right)\breg_{\y}(\w_{t+1},\y_t) \nonumber
\\ & \ \ \ +\left(\beta_{y}+\beta_{yX}-\frac{1}{\eta_t}\right)\breg_{\y}(\y_{t+1},\w_{t+1}) \nonumber
\\ & \ \ \ +\beta_{yX}\left((1+\theta)\breg_{\X}(\widehat{\Z}_{t+1},\X_t)+2\breg_{\X}(\widehat{\Z}_{t+1},\Z_{t+1})\right).
\end{align}

Summing \eqref{ineq:boundXseiries} and \eqref{ineq:boundYseiries} we obtain for $\eta_t\le\min\left\lbrace\frac{1}{\beta_{X}+\beta_{Xy}},\frac{1}{(1+\theta)(\beta_{X}+\beta_{yX})},\frac{1}{\beta_{y}+\beta_{yX}},\frac{1}{\beta_{y}+\beta_{Xy}}\right\rbrace$ that
\begin{align*}
& f(\Z_{t+1},\y) - f(\X,\w_{t+1})
\\ & \le \frac{1}{\eta_t}\left(\breg_{\X}(\X,\X_t)+\breg_{\y}(\y,\y_t)-\breg_{\X}(\X,\widehat{\X}_{t+1})-\breg_{\y}(\y,\y_{t+1})\right)
\\ & \ \ \ +\left((1+\theta)(\beta_{X}+\beta_{yX})-\frac{1}{\eta_t}\right)\breg_{\X}(\widehat{\Z}_{t+1},\X_t)+\left(\beta_{y}+\beta_{Xy}-\frac{1}{\eta_t}\right)\breg_{\y}(\w_{t+1},\y_t)
\\ & \ \ \ +\left(\beta_{X}+\beta_{Xy}-\frac{1}{\eta_t}\right)\breg_{\X}(\widehat{\X}_{t+1},\widehat{\Z}_{t+1})+\left(\beta_{y}+\beta_{yX}-\frac{1}{\eta_t}\right)\breg_{\y}(\y_{t+1},\w_{t+1})
\\ & \ \ \ +\sqrt{2}\Vert\nabla_{\X}f(\Z_{t+1},\w_{t+1})\Vert_2\sqrt{\breg_{\X}(\widehat{\Z}_{t+1},\Z_{t+1})} +2(\beta_{X}+\beta_{yX})\breg_{\X}(\widehat{\Z}_{t+1},\Z_{t+1})
\\ & \le \frac{1}{\eta_t}\left(\breg_{\X}(\X,\X_t)+\breg_{\y}(\y,\y_t)-\breg_{\X}(\X,\X_{t+1})-\breg_{\y}(\y,\y_{t+1})\right)
\\ & \ \ \ + \frac{1}{\eta_t}\left(\breg_{\X}(\X,\X_{t+1})-\breg_{\X}(\X,\widehat{\X}_{t+1})\right)+2(\beta_{X}+\beta_{yX})\breg_{\X}(\widehat{\Z}_{t+1},\Z_{t+1})
\\ & \ \ \ +\sqrt{2}\Vert\nabla_{\X}f(\Z_{t+1},\w_{t+1})\Vert_{\mathcal{X^*}}\sqrt{\breg_{\X}(\widehat{\Z}_{t+1},\Z_{t+1})} .
\end{align*}

Averaging over $t=1,\ldots,T$, and taking a $\eta_t=\eta$ we get that
\begin{align*} %\label{ineq:preConvergenceRateApprox}
& \frac{1}{T}\sum_{t=1}^T f(\Z_{t+1},\y) - \frac{1}{T}\sum_{t=1}^T  f(\X,\w_{t+1}) \nonumber
\\ & \le \frac{1}{\eta T}\left(\breg_{\X}(\X,\X_1)+\breg_{\y}(\y,\y_1)\right)\nonumber
+ \frac{1}{\eta T}\sum_{t=1}^T\left(\breg_{\X}(\X,\X_{t+1})-\breg_{\X}(\X,\widehat{\X}_{t+1})\right) \nonumber
\\ & \ \ \  + \frac{2(\beta_{X}+\beta_{yX})}{T}\sum_{t=1}^T\breg_{\X}(\widehat{\Z}_{t+1},\Z_{t+1}) \nonumber
\\ & \ \ \ +\frac{\sqrt{2}}{T}\sum_{t=1}^T\Vert\nabla_{\X}f(\Z_{t+1},\w_{t+1})\Vert_{\mathcal{X^*}}\sqrt{\breg_{\X}(\widehat{\Z}_{t+1},\Z_{t+1})}.
\end{align*}

%\textcolor{red}{
%Therefore, for any $\y\in\mathcal{K}$ and $\X\in\Sn$ it holds that
%\begin{align} \label{ineq:convergenceRateApprox}
%& \min_{t\in[T]}\left(f(\Z_{t+1},\y) - f(\X,\w_{t+1})\right)
%\nonumber
%\\ & \le \frac{1}{\eta T}\max_{(\X,\y)\in\Sn\times\mK}\left(\breg_{\X}(\X,\X_1)+\breg_{\y}(\y,\y_1)\right) \nonumber
%\\ & + \frac{1}{\eta T}\sum_{t=1}^T\max_{\X\in\Sn}\left(\breg_{\X}(\X,\X_{t+1})-\breg_{\X}(\X,\widehat{\X}_{t+1})\right)+ \frac{2(\beta_{X}+\beta_{yX})}{T}\sum_{t=1}^T\breg_{\X}(\widehat{\Z}_{t+1},\Z_{t+1}) \nonumber
%\\ & +\frac{\sqrt{2}}{T}\sum_{t=1}^T\Vert\nabla_{\X}f(\Z_{t+1},\w_{t+1})\Vert_{\mathcal{X^*}}\sqrt{\breg_{\X}(\widehat{\Z}_{t+1},\Z_{t+1})}.
%\end{align}
%%
%Taking the maximum of $f(\Z_{t+1},\y)$ over all $\y\in\mathcal{K}$ and the minimum of $f(\X,\w_{t+1})$ over all $\X\in\Sn$ we obtain that
%\begin{align*}
%& \min_{t\in[T]}\left(\max_{\y\in\mathcal{K}} f(\Z_{t+1},\y) - \min_{\X\in\Sn} f(\X,\w_{t+1})\right)
%\le \textrm{RHS of }\eqref{ineq:convergenceRateApprox}.
%\end{align*}
%%
%Alternatively, m
%}

%\textcolor{red}{
Maximizing $\frac{1}{T}\sum_{t=1}^Tf(\Z_{t+1},\y)$ over all $\y\in\mathcal{K}$ and minimizing $\frac{1}{T}\sum_{t=1}^Tf(\X,\w_{t+1})$ over all $\X\in\Sn$ and using the convexity of $f(\cdot,\y)$ and concavity of $f(\X,\cdot)$, we obtain that
\begin{align*}
& \max_{\y\in\mathcal{K}} f\left(\frac{1}{T}\sum_{t=1}^T \Z_{t+1},\y\right) -  \min_{\X\in\Sn} f\left(\X,\frac{1}{T}\sum_{t=1}^T\w_{t+1}\right) 
\\ & \le \max_{\y\in\mathcal{K}} \frac{1}{T}\sum_{t=1}^T f\left( \Z_{t+1},\y\right) -  \min_{\X\in\Sn} \frac{1}{T}\sum_{t=1}^T f\left(\X,\w_{t+1}\right)
\\ & \le \frac{1}{\eta T}\max_{(\X,\y)\in\Sn\times\mK}\left(\breg_{\X}(\X,\X_1)+\breg_{\y}(\y,\y_1)\right) \nonumber
\\ & + \frac{1}{\eta T}\sum_{t=1}^T\max_{\X\in\Sn}\left(\breg_{\X}(\X,\X_{t+1})-\breg_{\X}(\X,\widehat{\X}_{t+1})\right)+ \frac{2(\beta_{X}+\beta_{yX})}{T}\sum_{t=1}^T\breg_{\X}(\widehat{\Z}_{t+1},\Z_{t+1}) \nonumber
\\ & +\frac{\sqrt{2}}{T}\sum_{t=1}^T\Vert\nabla_{\X}f(\Z_{t+1},\w_{t+1})\Vert_{\mathcal{X^*}}\sqrt{\breg_{\X}(\widehat{\Z}_{t+1},\Z_{t+1})}.
\end{align*}
%}

\end{proof}

\subsection{Bounds on the bregman distance of updates in Algorithm \ref{alg:ApproxMP} from saddle-points}
\label{sec:appendix:proofLemma7}
The following two lemmas establish that with an appropriate choice of step-size, the distances between the sequences $\lbrace(\X_t,\y_t)\rbrace_{t\ge1}$ and $\lbrace(\Z_{t},\w_t)\rbrace_{t\ge2}$ in Algorithm \ref{alg:ApproxMP}, and an optimal solution $(\X^*,\y^*)$, do not increase, up to a bounded error. Since our results for replacing the full-rank SVDs with only low-rank SVDs hold in a certain proximity of an optimal solution, it is important to ensure that when using a ``warm-start" initialization, throughout the run of Algorithm \ref{alg:ApproxMP}, the iterates indeed remain sufficiently close to the optimal solution. %Thus, guaranteeing that these results hold for all iterations. 
\begin{lemma}
\label{lemma:inequalityForDecreasingRadius}
Let $\lbrace(\X_t,\y_t)\rbrace_{t\ge1}$, $\lbrace(\Z_{t},\w_t)\rbrace_{t\ge2}$, $\lbrace\widehat{\X}_t\rbrace_{t\ge2}$, and $\lbrace\widehat{\Z}_{t}\rbrace_{t\ge2}$ be the sequences generated by Algorithm \ref{alg:ApproxMP} with a step-size $\eta_t\ge0$, and denote  $\theta=\bigg\lbrace\begin{array}{ll} 0 & if\ \Z_{t+1}=\widehat{\Z}_{t+1} ~\forall{}t
\\ 1 & otherwise \end{array}$. Let $(\X^*,\y^*)$ be an optimal solution to Problem \eqref{problem1}. Then for all $t\ge 1$ it holds that
\begin{align*}
& \breg_{\X}(\X^*,\widehat{\X}_{t+1})+\breg_{\y}(\y^*,\y_{t+1}) \nonumber
\\ & \le \breg_{\X}(\X^*,\X_t) +\breg_{\y}(\y^*,\y_t) \nonumber
 \\ & \ \ \ +\left(2(1+\theta)\eta_t^2\left(\beta_{X}^2+\beta_{yX}^2\right)-1\right)\breg_{\X}(\widehat{\Z}_{t+1},\X_t)+\left(2\eta_t^2\left(\beta_{y}^2+\beta_{Xy}^2\right)-1\right)\breg_{\y}(\w_{t+1},\y_t) \nonumber
\\ & \ \ \ +4\eta_t^2\left(\beta_{X}^2+\beta_{yX}^2\right)\breg_{\X}(\widehat{\Z}_{t+1},\Z_{t+1})+ \sqrt{2}\eta_tG\sqrt{\breg_{\X}(\widehat{\Z}_{t+1},\Z_{t+1})}.
\end{align*}
\end{lemma}

\begin{proof}

Invoking the gradient inequality,
\begin{align*}
\langle\nabla_{\X} f(\Z_{t+1},\w_{t+1}),\Z_{t+1}-\X^*\rangle & \ge f(\Z_{t+1},\w_{t+1})-f(\X^*,\w_{t+1}) 
\\ & \ge f(\Z_{t+1},\w_{t+1})-f(\X^*,\y^*),
\end{align*}
and similarly,
\begin{align*}
\langle-\nabla_{\y} f(\Z_{t+1},\w_{t+1}),\w_{t+1}-\y^*\rangle & \ge -f(\Z_{t+1},\w_{t+1})+f(\Z_{t+1},\y^*) 
\\ & \ge -f(\Z_{t+1},\w_{t+1})+f(\X^*,\y^*).
\end{align*}
Combining these two inequalities we obtain
\begin{align*}
\left\langle\left[\begin{array}{c}\nabla_{\X} f(\Z_{t+1},\w_{t+1})\\ -\nabla_{\y} f(\Z_{t+1},\w_{t+1})\end{array}\right],\left[\begin{array}{c}\Z_{t+1} \\ \w_{t+1}\end{array}\right]-\left[\begin{array}{c}\X^*\\ \y^*\end{array}\right]\right\rangle & \ge 0.
\end{align*}
Thus,
\begin{align} \label{ineq:inProofBound1}
&\left\langle\left[\begin{array}{c}\nabla_{\X} f(\Z_{t+1},\w_{t+1})\\ -\nabla_{\y} f(\Z_{t+1},\w_{t+1})\end{array}\right],\left[\begin{array}{c}\widehat{\X}_{t+1} \\ \y_{t+1}\end{array}\right]-\left[\begin{array}{c}\X^*\\ \y^*\end{array}\right]\right\rangle
\\ & \ge \left\langle\left[\begin{array}{c}\nabla_{\X} f(\Z_{t+1},\w_{t+1})\\ -\nabla_{\y} f(\Z_{t+1},\w_{t+1})\end{array}\right],\left[\begin{array}{c}\widehat{\X}_{t+1} \\ \y_{t+1}\end{array}\right]-\left[\begin{array}{c}\Z_{t+1}\\ \w_{t+1}\end{array}\right]\right\rangle.
\end{align}

By the definitions of $\widehat{\X}_{t+1}$ and $\y_{t+1}$ in Algorithm \ref{alg:ApproxMP},
\begin{align*}
\widehat{\X}_{t+1}&=\argmin_{\X\in\Sn}\lbrace\langle\eta_t\nabla_{\X}f(\Z_{t+1},\w_{t+1}),\X\rangle+\breg_{\X}(\X,\X_t)\rbrace
\\ & = \argmin_{\X\in\mathcal{S}_n}\lbrace\langle\eta_t\nabla_{\X} f(\Z_{t+1},\w_{t+1})-\nabla\omega_{\X}(\X_t),\X\rangle+\omega_{\X}(\X)\rbrace,
\\ \y_{t+1}&= \argmin_{\y\in\mathcal{K}}\lbrace\langle-\eta_t\nabla_{\y}f(\Z_{t+1},\w_{t+1}),\y\rangle+\breg_{\y}(\y,\y_t)\rbrace
\\ & = \argmin_{\y\in\mathcal{K}}\lbrace\langle-\eta_t\nabla_{\y} f(\Z_{t+1},\w_{t+1})-\nabla\omega_{\y}(\y_t),\y\rangle+\omega_{\y}(\y)\rbrace.
\end{align*}

Therefore, by the optimality conditions for $\widehat{\X}_{t+1}$ and $\y_{t+1}$, it holds for all $\X\in\mathcal{S}_n$ and $\y\in\mathcal{K}$ that, 
\begin{align*}
\langle \eta_t\nabla_{\X} f(\Z_{t+1},\w_{t+1})-\nabla\omega_{\X}(\X_t)+\nabla\omega_{\X}(\widehat{\X}_{t+1}),\X-\widehat{\X}_{t+1}\rangle & \ge0,
\\ \langle -\eta_t\nabla_{\y} f(\Z_{t+1},\w_{t+1})-\nabla\omega_{\y}(\y_t)+\nabla\omega_{\y}(\y_{t+1}),\y-\y_{t+1}\rangle & \ge0. 
\end{align*}

Using the three point lemma as in \eqref{lemma:threePointLemma},
\begin{align*}
0 & \le \langle \eta_t\nabla_{\X} f(\Z_{t+1},\w_{t+1})-\nabla\omega_{\X}(\X_t)+\nabla\omega_{\X}(\widehat{\X}_{t+1}),\X^*-\widehat{\X}_{t+1}\rangle
\\ & = \langle \eta_t\nabla_{\X} f(\Z_{t+1},\w_{t+1}),\X^*-\widehat{\X}_{t+1}\rangle -\breg_{\X}(\X^*,\widehat{\X}_{t+1})-\breg_{\X}(\widehat{\X}_{t+1},\X_t)+\breg_{\X}(\X^*,\X_t), 
\end{align*}
and
\begin{align*}
0 & \le \langle -\eta_t\nabla_{\y} f(\Z_{t+1},\w_{t+1})-\nabla\omega_{\y}(\y_t)+\nabla\omega_{\y}(\y_{t+1}),\y^*-\y_{t+1}\rangle
\\ & = \langle -\eta_t\nabla_{\y} f(\Z_{t+1},\w_{t+1}),\y^*-\y_{t+1}\rangle -\breg_{\y}(\y^*,\y_{t+1})-\breg_{\y}(\y_{t+1},\y_t)+\breg_{\y}(\y^*,\y_t). 
\end{align*}

Combining the last two inequalities and \eqref{ineq:inProofBound1} we obtain
\begin{align} \label{ineq:inProofBound2}
0 & \le \left\langle\left[\begin{array}{c}\nabla_{\X} f(\Z_{t+1},\w_{t+1})\\ -\nabla_{\y} f(\Z_{t+1},\w_{t+1})\end{array}\right],\left[\begin{array}{c}\Z_{t+1}\\ \w_{t+1}\end{array}\right]-\left[\begin{array}{c}\widehat{\X}_{t+1} \\ \y_{t+1}\end{array}\right]\right\rangle -\breg_{\X}(\X^*,\widehat{\X}_{t+1})-\breg_{\X}(\widehat{\X}_{t+1},\X_t) \nonumber
\\ & \ \ \ +\breg_{\X}(\X^*,\X_t)-\breg_{\y}(\y^*,\y_{t+1})-\breg_{\y}(\y_{t+1},\y_t)+\breg_{\y}(\y^*,\y_t) \nonumber
\\ & = \langle \eta_t\nabla_{\X} f(\Z_{t+1},\w_{t+1})-\nabla\omega_{\X}(\X_t)+\nabla\omega_{\X}(\widehat{\Z}_{t+1}),\widehat{\Z}_{t+1}-\widehat{\X}_{t+1}\rangle \nonumber
\\ & \ \ \  + \langle \eta_t\nabla_{\X} f(\Z_{t+1},\w_{t+1}),\Z_{t+1}-\widehat{\Z}_{t+1}\rangle \nonumber
\\ & \ \ \ -\breg_{\X}(\widehat{\X}_{t+1},\widehat{\Z}_{t+1})-\breg_{\X}(\widehat{\Z}_{t+1},\X_t)-\breg_{\X}(\X^*,\widehat{\X}_{t+1})+\breg_{\X}(\X^*,\X_t) \nonumber
\\ & \ \ \ +\langle -\eta_t\nabla_{\y} f(\Z_{t+1},\w_{t+1})-\nabla\omega_{\y}(\y_t)+\nabla\omega_{\y}(\w_{t+1}),\w_{t+1}-\y_{t+1}\rangle \nonumber
\\ & \ \ \ -\breg_{\y}(\y_{t+1},\w_{t+1})-\breg_{\y}(\w_{t+1},\y_t)-\breg_{\y}(\y^*,\y_{t+1})+\breg_{\y}(\y^*,\y_t), 
\end{align}
where the equality holds from \eqref{lemma:threePointLemma}.

By the definitions of $\widehat{\Z}_{t+1}$ and $\w_{t+1}$ in Algorithm \ref{alg:ApproxMP},
\begin{align*}
\widehat{\Z}_{t+1} & =\argmin_{\X\in\mathcal{S}_n}\lbrace\langle\eta_t\nabla_{\X} f(\X_t,\y_t)-\nabla\omega_{\X}(\X_t),\X\rangle+\omega_{\X}(\X)\rbrace,
\\ \w_{t+1} & = \argmin_{\y\in\mathcal{K}}\lbrace\langle-\eta_t\nabla_{\y} f(\X_t,\y_t)-\nabla\omega_{\y}(\y_t),\y\rangle+\omega_{\y}(\y)\rbrace.
\end{align*}

Therefore, by the optimality conditions for $\widehat{\Z}_{t+1}$ and $\w_{t+1}$, for all $\X\in\mathcal{S}_n$ and $\y\in\mathcal{K}$, 
\begin{align}
\langle \eta_t\nabla_{\X} f(\X_t,\y_t)-\nabla\omega_{\X}(\X_t)+\nabla\omega_{\X}(\widehat{\Z}_{t+1}),\X-\widehat{\Z}_{t+1}\rangle & \ge0, \label{ineq:Zhatopt:inProof}
\\ \langle -\eta_t\nabla_{\y} f(\X_t,\y_t)-\nabla\omega_{\y}(\y_t)+\nabla\omega_{\y}(\w_{t+1}),\y-\w_{t+1}\rangle & \ge0. \label{ineq:Wopt:inProof}
\end{align}

Therefore, taking $\X=\widehat{\X}_{t+1}$ it holds that
\begin{align} \label{ineq:inProofOne}
& \langle \eta_t\nabla_{\X} f(\Z_{t+1},\w_{t+1})-\nabla\omega_{\X}(\X_t)+\nabla\omega_{\X}(\widehat{\Z}_{t+1}),\widehat{\Z}_{t+1}-\widehat{\X}_{t+1}\rangle \nonumber
\\ & = \langle \eta_t\nabla_{\X} f(\X_t,\y_t)-\nabla\omega_{\X}(\X_t)+\nabla\omega_{\X}(\widehat{\Z}_{t+1}),\widehat{\Z}_{t+1}-\widehat{\X}_{t+1}\rangle  \nonumber
\\ & \ \ \ +\eta_t\langle \nabla_{\X} f(\Z_{t+1},\w_{t+1})-\nabla_{\X} f(\X_t,\y_t),\widehat{\Z}_{t+1}-\widehat{\X}_{t+1}\rangle \nonumber
\\ & \underset{(a)}{\le} \eta_t\langle \nabla_{\X} f(\Z_{t+1},\w_{t+1})-\nabla_{\X} f(\X_t,\y_t),\widehat{\Z}_{t+1}-\widehat{\X}_{t+1}\rangle \nonumber
\\ & \underset{(b)}{\le} \eta_t\Vert\nabla_{\X} f(\Z_{t+1},\w_{t+1})-\nabla_{\X} f(\X_t,\y_t)\Vert_{\mathcal{X^*}}\cdot\Vert\widehat{\Z}_{t+1}-\widehat{\X}_{t+1}\Vert_{\mathcal{X}} \nonumber
\\ & \underset{(c)}{\le} \eta_t\left(\beta_{X}\Vert\Z_{t+1}-\X_t\Vert_{\mathcal{X}}+\beta_{Xy}\Vert\w_{t+1}-\y_t\Vert_{\mathcal{Y}}\right)\cdot\Vert\widehat{\Z}_{t+1}-\widehat{\X}_{t+1}\Vert_{\mathcal{X}} \nonumber
\\ & \underset{(d)}{\le} \eta_t^2\beta_{X}^2\Vert\Z_{t+1}-\X_t\Vert_{\mathcal{X}}^2+\eta_t^2\beta_{Xy}^2\Vert\w_{t+1}-\y_t\Vert_{\mathcal{Y}}^2+\frac{1}{2}\Vert\widehat{\Z}_{t+1}-\widehat{\X}_{t+1}\Vert_{\mathcal{X}}^2 \nonumber
\\ & \underset{(e)}{\le} (1+\theta)\eta_t^2\beta_{X}^2\Vert\widehat{\Z}_{t+1}-\X_t\Vert_{\mathcal{X}}^2+2\eta_t^2\beta_{X}^2\Vert\widehat{\Z}_{t+1}-\Z_{t+1}\Vert_{\mathcal{X}}^2+\eta_t^2\beta_{Xy}^2\Vert\w_{t+1}-\y_t\Vert_{\mathcal{Y}}^2 \nonumber \\ & \ \ \  +\frac{1}{2}\Vert\widehat{\Z}_{t+1}-\widehat{\X}_{t+1}\Vert_{\mathcal{X}}^2 \nonumber
\\ & \underset{(f)}{\le} 2(1+\theta)\eta_t^2\beta_{X}^2\breg_{\X}(\widehat{\Z}_{t+1},\X_t)+4\eta_t^2\beta_{X}^2\breg_{\X}(\widehat{\Z}_{t+1},\Z_{t+1})+2\eta_t^2\beta_{Xy}^2\breg_{\y}(\w_{t+1},\y_t) \nonumber \\ & \ \ \ +\breg_{\X}(\widehat{\X}_{t+1},\widehat{\Z}_{t+1}),
\end{align}
where (a) follows from \eqref{ineq:Zhatopt:inProof}, (b) follows from H\"{o}lder's inequality, (c) follows from the $\beta_X$ and $\beta_{Xy}$ smoothness, (d) follows since for all $a,b\in\reals$ the inequality $ab\le a^2+\frac{1}{4}b^2$ holds, (e) holds since when using exact updates then $\widehat{\Z}_{t+1}=\Z_{t+1}$ and $\theta=0$ and then $(1+\theta)\Vert\widehat{\Z}_{t+1}-\X_t\Vert_{\mathcal{X}}^2+2\Vert\widehat{\Z}_{t+1}-\Z_{t+1}\Vert_{\mathcal{X}}^2=\Vert\Z_{t+1}-\X_t\Vert_{\mathcal{X}}^2$ and otherwise $\theta=1$, and (f) follows from \eqref{ineq:strongConvexityOfBregmanDistance}.

Similarly, taking $\y=\y_{t+1}$ it holds that
\begin{align} \label{ineq:inProofTwo}
& \langle -\eta_t\nabla_{\y} f(\Z_{t+1},\w_{t+1})-\nabla\omega_{\y}(\y_t)+\nabla\omega_{\y}(\w_{t+1}),\w_{t+1}-\y_{t+1}\rangle \nonumber
\\ & = \langle -\eta_t\nabla_{\y} f(\X_t,\y_t)-\nabla\omega_{\y}(\y_t)+\nabla\omega_{\y}(\w_{t+1}),\w_{t+1}-\y_{t+1}\rangle \nonumber
\\ & \ \ \ +\eta_t\langle \nabla_{\y} f(\X_t,\y_t)-\nabla_{\y} f(\Z_{t+1},\w_{t+1}),\w_{t+1}-\y_{t+1}\rangle \nonumber
\\ & \underset{(a)}{\le} \eta_t\langle \nabla_{\y} f(\X_t,\y_t)-\nabla_{\y} f(\Z_{t+1},\w_{t+1}),\w_{t+1}-\y_{t+1}\rangle \nonumber
\\ & \underset{(b)}{\le} \eta_t\Vert\nabla_{\y} f(\Z_{t+1},\w_{t+1})-\nabla_{\y} f(\X_t,\y_t)\Vert_{\mathcal{Y^*}}\cdot\Vert\w_{t+1}-\y_{t+1}\Vert_{\mathcal{Y}} \nonumber
\\ & \underset{(c)}{\le} \eta_t\left(\beta_{y}\Vert\w_{t+1}-\y_t\Vert_{\mathcal{Y}}+\beta_{yX}\Vert\Z_{t+1}-\X_t\Vert_{\mathcal{X}}\right)\cdot\Vert\w_{t+1}-\y_{t+1}\Vert_{\mathcal{Y}} \nonumber
\\ & \underset{(d)}{\le} \eta_t^2\beta_{y}^2\Vert\w_{t+1}-\y_t\Vert_{\mathcal{Y}}^2 + \eta_t^2\beta_{yX}^2\Vert\Z_{t+1}-\X_t\Vert_{\mathcal{X}}^2 + \frac{1}{2}\Vert\w_{t+1}-\y_{t+1}\Vert_{\mathcal{Y}}^2 \nonumber
\\ & \underset{(e)}{\le} \eta_t^2\beta_{y}^2\Vert\w_{t+1}-\y_t\Vert_{\mathcal{Y}}^2 + (1+\theta)\eta_t^2\beta_{yX}^2\Vert\widehat{\Z}_{t+1}-\X_t\Vert_{\mathcal{X}}^2 + 2\eta_t^2\beta_{yX}^2\Vert\widehat{\Z}_{t+1}-\Z_{t+1}\Vert_{\mathcal{X}}^2 \nonumber \\ & \ \ \ + \frac{1}{2}\Vert\w_{t+1}-\y_{t+1}\Vert_{\mathcal{Y}}^2 \nonumber
\\ & \le 2\eta_t^2\beta_{y}^2\breg_{\y}(\w_{t+1},\y_t) + 2(1+\theta)\eta_t^2\beta_{yX}^2\breg_{\X}(\widehat{\Z}_{t+1},\X_t)  + 4\eta_t^2\beta_{yX}^2\breg_{\X}(\widehat{\Z}_{t+1},\Z_{t+1}) \nonumber \\ & \ \ \ + \breg_{\y}(\y_{t+1},\w_{t+1}),
\end{align}
where (a) follows from \eqref{ineq:Wopt:inProof}, (b) follows from H\"{o}lder's inequality, (c) follows from the $\beta_y$ and $\beta_{yX}$ smoothness, (d) follows since for all $a,b\in\reals$ the inequality $ab\le a^2+\frac{1}{4}b^2$ holds, (e) holds since when using exact updates then $\widehat{\Z}_{t+1}=\Z_{t+1}$ and $\theta=0$ and then $(1+\theta)\Vert\widehat{\Z}_{t+1}-\X_t\Vert_{\mathcal{X}}^2+2\Vert\widehat{\Z}_{t+1}-\Z_{t+1}\Vert_{\mathcal{X}}^2=\Vert\Z_{t+1}-\X_t\Vert_{\mathcal{X}}^2$ and otherwise $\theta=1$, and (f) follows from \eqref{ineq:strongConvexityOfBregmanDistance}.

Also, using H\"{o}lder's inequality it holds that
\begin{align} \label{ineq:inProofThree}
\langle \eta_t\nabla_{\X} f(\Z_{t+1},\w_{t+1}),\Z_{t+1}-\widehat{\Z}_{t+1}\rangle & \le \eta_t\Vert\nabla_{\X} f(\Z_{t+1},\w_{t+1})\Vert_{\mathcal{X^*}}\cdot\Vert\Z_{t+1}-\widehat{\Z}_{t+1}\Vert_{\mathcal{X}} \nonumber
\\ & \le \sqrt{2}\eta_tG\sqrt{\breg_{\X}(\widehat{\Z}_{t+1},\Z_{t+1})}.  
\end{align}

Plugging the inequalities \eqref{ineq:inProofOne}, \eqref{ineq:inProofTwo}, and \eqref{ineq:inProofThree} into \eqref{ineq:inProofBound2} we obtain that
\begin{align*}
& \breg_{\X}(\X^*,\widehat{\X}_{t+1})+\breg_{\y}(\y^*,\y_{t+1})
\\ & \le \breg_{\X}(\X^*,\X_t) +\breg_{\y}(\y^*,\y_t) +4\eta_t^2\left(\beta_{X}^2+\beta_{yX}^2\right)\breg_{\X}(\widehat{\Z}_{t+1},\Z_{t+1})
\\ & \ \ \ + \sqrt{2}\eta_tG\sqrt{\breg_{\X}(\widehat{\Z}_{t+1},\Z_{t+1})}
+\left(2(1+\theta)\eta_t^2\left(\beta_{X}^2+\beta_{yX}^2\right)-1\right)\breg_{\X}(\widehat{\Z}_{t+1},\X_t) \\ & \ \ \ +\left(2\eta_t^2\left(\beta_{y}^2+\beta_{Xy}^2\right)-1\right)\breg_{\y}(\w_{t+1},\y_t). 
\end{align*}

\end{proof}

%
%\subsection{Proof of Lemma \ref{lemma:decreasingRadius}}
%\label{sec:appendix:proofLemma8}

\begin{lemma}
 \label{lemma:decreasingRadius}
Let $\lbrace(\X_t,\y_t)\rbrace_{t\ge1}$, $\lbrace(\Z_{t},\w_t)\rbrace_{t\ge2}$, $\lbrace\widehat{\X}_t\rbrace_{t\ge2}$, and $\lbrace\widehat{\Z}_{t}\rbrace_{t\ge2}$ be the sequences generated by Algorithm \ref{alg:ApproxMP} with a step-size $\eta_t\ge0$ such that $\eta_t^2\cdot\max\left\lbrace 2(1+\theta)(\beta_{X}^2+\beta_{yX}^2),2(\beta_{y}^2+\beta_{Xy}^2)\right\rbrace-1\le0$, where  $\theta=\bigg\lbrace\begin{array}{ll} 0 & if\ \Z_{t+1}=\widehat{\Z}_{t+1} ~\forall t
\\ 1 & otherwise \end{array}$.  Let $(\X^*,\y^*)$ be an optimal solution to Problem \eqref{problem1}. Then for all $t\ge 1$ it holds that
\begin{align*}
& \breg_{\X}(\X^*,\X_{t+1})+\breg_{\y}(\y^*,\y_{t+1}) 
\\ & \le \breg_{\X}(\X^*,\X_t) +\breg_{\y}(\y^*,\y_t)
+\breg_{\X}(\widehat{\Z}_{t+1},\Z_{t+1}) \\ & \ \ \ + \sqrt{2}\eta_tG\sqrt{\breg_{\X}(\widehat{\Z}_{t+1},\Z_{t+1})}+\left(\breg_{\X}(\X^*,\X_{t+1})-\breg_{\X}(\X^*,\widehat{\X}_{t+1})\right)
\end{align*}
and
\begin{align*}
& \Vert\Z_{t+1}-\X^*\Vert_{\mathcal{X}}^2+\Vert\w_{t+1}-\y^*\Vert_{\mathcal{Y}}^2 
\\ & \le \left(\frac{1}{\gamma_t}+8\right)(\breg_{\X}(\X^*,\X_t) +\breg_{\y}(\y^*,\y_t))
\\ & \ \ \  +\left(\frac{1}{\gamma_t}+8\right)\breg_{\X}(\widehat{\Z}_{t+1},\Z_{t+1})+ \frac{\sqrt{2}\eta_tG}{\gamma_t}\sqrt{\breg_{\X}(\widehat{\Z}_{t+1},\Z_{t+1})},
\end{align*} 
where $\gamma_t = \min\left\lbrace 1-2(1+\theta)\eta_t^2\left(\beta_{X}^2+\beta_{yX}^2\right),1-2\eta_t^2\left(\beta_{y}^2+\beta_{Xy}^2\right)\right\rbrace$.
\end{lemma}

\begin{proof}

By our choice of step-size it holds that $2(1+\theta)\eta_t^2(\beta_{X}^2+\beta_{yX}^2)-1\le 0$ and $2\eta_t^2(\beta_{y}^2+\beta_{Xy}^2)-1\le0$, and so, from Lemma \ref{lemma:inequalityForDecreasingRadius} we obtain that
\begin{align*}
& \breg_{\X}(\X^*,\X_{t+1})+\breg_{\y}(\y^*,\y_{t+1}) 
\\ & \le \breg_{\X}(\X^*,\X_t) +\breg_{\y}(\y^*,\y_t)
\\ & \ \ \ +\breg_{\X}(\widehat{\Z}_{t+1},\Z_{t+1})+ \sqrt{2}\eta_tG\sqrt{\breg_{\X}(\widehat{\Z}_{t+1},\Z_{t+1})}+\breg_{\X}(\X^*,\X_{t+1})-\breg_{\X}(\X^*,\widehat{\X}_{t+1}). 
\end{align*}

In addition, from Lemma \ref{lemma:inequalityForDecreasingRadius}, by our choice of $\eta_t$, and invoking \eqref{ineq:strongConvexityOfBregmanDistance}, it holds that
\begin{align*}
& \breg_{\X}(\X^*,\widehat{\X}_{t+1})+\breg_{\y}(\y^*,\y_{t+1}) 
\\ & \le \breg_{\X}(\X^*,\X_t) +\breg_{\y}(\y^*,\y_t)+\breg_{\X}(\widehat{\Z}_{t+1},\Z_{t+1})+ \sqrt{2}\eta_tG\sqrt{\breg_{\X}(\widehat{\Z}_{t+1},\Z_{t+1})}
 \\ & \ \ \ +\frac{1}{2}\left(2(1+\theta)\eta_t^2\left(\beta_{X}^2+\beta_{yX}^2\right)-1\right)\Vert\widehat{\Z}_{t+1}-\X_t\Vert_{\mathcal{X}}^2
 \\ & \ \ \ +\frac{1}{2}\left(2\eta_t^2\left(\beta_{y}^2+\beta_{Xy}^2\right)-1\right)\Vert\w_{t+1}-\y_t\Vert_{\mathcal{Y}}.
\end{align*}

Since $\breg_{\X}(\X^*,\widehat{\X}_{t+1})+\breg_{\y}(\y^*,\y_{t+1})\ge0$ and by rearranging we obtain that
\begin{align*}
& \frac{1}{2}\left(1-2(1+\theta)\eta_t^2\left(\beta_{X}^2+\beta_{yX}^2\right)\right)\Vert\widehat{\Z}_{t+1}-\X_t\Vert_{\mathcal{X}}^2+\frac{1}{2}\left(1-2\eta_t^2\left(\beta_{y}^2+\beta_{Xy}^2\right)\right)\Vert\w_{t+1}-\y_t\Vert_{\mathcal{Y}}
\\ & \le \breg_{\X}(\X^*,\X_t) +\breg_{\y}(\y^*,\y_t)
+\breg_{\X}(\widehat{\Z}_{t+1},\Z_{t+1})+ \sqrt{2}\eta_tG\sqrt{\breg_{\X}(\widehat{\Z}_{t+1},\Z_{t+1})}.
\end{align*}

Denoting 
$\gamma_t = \min\left\lbrace 1-2(1+\theta)\eta_t^2\left(\beta_{X}^2+\beta_{yX}^2\right),1-2\eta_t^2\left(\beta_{y}^2+\beta_{Xy}^2\right)\right\rbrace$ 
we get that
\begin{align} \label{ineq:WithinProof8}
& \Vert\widehat{\Z}_{t+1}-\X_t\Vert_{\mathcal{X}}^2+\Vert\w_{t+1}-\y_t\Vert_{\mathcal{Y}}^2 \nonumber
\\ & \le \frac{1}{2\gamma_t}\left(\breg_{\X}(\X^*,\X_t) +\breg_{\y}(\y^*,\y_t)+\breg_{\X}(\widehat{\Z}_{t+1},\Z_{t+1})+ \sqrt{2}\eta_tG\sqrt{\breg_{\X}(\widehat{\Z}_{t+1},\Z_{t+1})}\right).  
\end{align}

Therefore, since for all $a,b,c\in\reals$ it holds that $(a+b+c)^2\le4a^2+2b^2+4c^2$, invoking \eqref{ineq:strongConvexityOfBregmanDistance}, and plugging in \eqref{ineq:WithinProof8} we have that
\begin{align*}
& \Vert\Z_{t+1}-\X^*\Vert_{\mathcal{X}}^2+\Vert\w_{t+1}-\y^*\Vert_{\mathcal{Y}}^2 
\\ & \le 4\Vert\Z_{t+1}-\widehat{\Z}_{t+1}\Vert_{\mathcal{X}}^2+2\Vert\widehat{\Z}_{t+1}-\X_t\Vert_{\mathcal{X}}^2+4\Vert\X_t-\X^*\Vert_{\mathcal{X}}^2+2\Vert\w_{t+1}-\y_t\Vert_{\mathcal{Y}}^2\\ & \ \ \ +2\Vert\y_t-\y^*\Vert_{\mathcal{Y}}^2
\\ & \le \left(\frac{1}{\gamma_t}+8\right)(\breg_{\X}(\X^*,\X_t) +\breg_{\y}(\y^*,\y_t))
+\left(\frac{1}{\gamma_t}+8\right)\breg_{\X}(\widehat{\Z}_{t+1},\Z_{t+1})\\ & \ \ \ + \frac{\sqrt{2}\eta_tG}{\gamma_t}\sqrt{\breg_{\X}(\widehat{\Z}_{t+1},\Z_{t+1})}.
\end{align*}

\end{proof}

\section{Proofs omitted from from Section \ref{sec:EucledianCase}}

\subsection{Relationship between full Lipschitz parameter and its components}
\label{sec:appendix:fullLipschitz}

We denote by $\beta$ the full Euclidean Lipschitz parameter of the gradient, that is for any $\X,\tilde{\X}\in\Sn$ and $\y,\tilde{\y}\in\mathcal{K}$, 
\begin{align*}
\Vert(\nabla_{\X}f(\X,\y),-\nabla_{\y}f(\X,\y))-(\nabla_{\X}f(\tilde{\X},\tilde{\y}),-\nabla_{\y}f(\tilde{\X},\tilde{\y}))\Vert \le\beta\Vert(\X,\Y)-(\tilde{\X},\tilde{\y})\Vert.
\end{align*}

To establish the relationship between $\beta$ and $\beta_X,\beta_y,\beta_{Xy},\beta_{yX}$, we can see that for all $\X,\tilde{\X}\in\Sn$ and all $\y,\tilde{\y}\in\mathcal{K}$
\begin{align*}
& \Vert(\nabla_{\X}f(\X,\y),-\nabla_{\y}f(\X,\y))-(\nabla_{\X}f(\tilde{\X},\tilde{\y}),-\nabla_{\y}f(\tilde{\X},\tilde{\y}))\Vert^2 
\\ & = \Vert\nabla_{\X}f(\X,\y)-\nabla_{\X}f(\tilde{\X},\y)\Vert_F^2 + \Vert\nabla_{\y}f(\X,\y)-\nabla_{\y}f(\tilde{\X},\tilde{\y})\Vert_F^2
\\ & \le 2\Vert\nabla_{\X}f(\X,\y)-\nabla_{\X}f(\tilde{\X},\y)\Vert_F^2 + 2\Vert\nabla_{\X}f(\tilde{\X},\y)-\nabla_{\X}f(\tilde{\X},\tilde{\y})\Vert_F^2 \\ &  \ \ \ + 2\Vert\nabla_{\y}f(\X,\y)-\nabla_{\y}f(\tilde{\X},\y)\Vert_F^2 + 2\Vert\nabla_{\y}f(\tilde{\X},\y)-\nabla_{\y}f(\tilde{\X},\tilde{\y})\Vert_F^2
\\ & \le 2(\beta_{X}^2+\beta_{yX}^2)\Vert\X-\tilde{\X}\Vert_F^2 + 2(\beta_{y}^2+\beta_{Xy}^2)\Vert\y-\tilde{\y}\Vert_F^2
\\ & \le 2\max\lbrace\beta_{X}^2+\beta_{yX}^2,\beta_{y}^2+\beta_{Xy}^2\rbrace\Vert(\X,\y)-(\tilde{\X},\tilde{\y})\Vert^2.
\end{align*}

Therefore, $\beta= \sqrt{2}\max\left\lbrace\sqrt{\beta_{X}^2+\beta_{yX}^2},\sqrt{\beta_{y}^2+\beta_{Xy}^2}\right\rbrace$.

\subsection{Proof of Lemma \ref{lemma:convergenceOfXYZWsequences}}
\label{sec:AppendixLemma9}
We first restate the lemma and then prove it.

\begin{lemma}
Let $\lbrace(\X_t,\y_t)\rbrace_{t\ge1}$ and $\lbrace(\Z_{t},\w_t)\rbrace_{t\ge2}$ be the sequences generated by Algorithm \ref{alg:EG} with a step-size $\eta_t\le\frac{1}{\beta}$ where $\beta= \sqrt{2}\max\left\lbrace\sqrt{\beta_{X}^2+\beta_{yX}^2},\sqrt{\beta_{y}^2+\beta_{Xy}^2}\right\rbrace$. Then for all $t\ge1$ it holds that 
\begin{align*}
\Vert(\X_{t+1},\y_{t+1})-(\X^*,\y^*)\Vert & \le \Vert(\X_{t},\y_t)-(\X^*,\y^*)\Vert,
\\ \Vert(\Z_{t+1},\w_{t+1})-(\X^*,\y^*)\Vert & \le \Bigg(1+\frac{1}{\sqrt{1-\eta_{t}^2\beta^2}}\Bigg)\Vert(\X_t,\y_t)-(\X^*,\y^*)\Vert.
\end{align*}
\end{lemma}

\begin{proof}
Invoking Lemma \ref{lemma:inequalityForDecreasingRadius} with exact updates, that is $\X_{t+1}=\widehat{\X}_{t+1}$, $\Z_{t+1}=\widehat{\Z}_{t+1}$ for all $t$, and $\theta=0$, it follows that
\begin{align} \label{ineq:EGconvergenceOfSeries}
 &\Vert(\X_{t+1},\y_{t+1})-(\X^*,\y^*)\Vert^2 \nonumber
 \\ & \le \Vert(\X_{t},\y_t)-(\X^*,\y^*)\Vert^2 - (1-\eta_t^2\beta^2)\Vert(\X_t,\y_t)-(\Z_{t+1},\w_{t+1})\Vert^2.
\end{align}

Since $\eta_t^2\beta^2\le1$ it follows that
\begin{align*}
\Vert(\X_{t+1},\y_{t+1})-(\X^*,\y^*)\Vert \le \Vert(\X_{t},\y_t)-(\X^*,\y^*)\Vert.
\end{align*}

In addition, using \eqref{ineq:EGconvergenceOfSeries}
\begin{align*}
\Vert(\X_t,\y_t)-(\Z_{t+1},\w_{t+1})\Vert & \le \sqrt{(1-\eta_t^2\beta^2)^{-1}}\Vert(\X_{t},\y_t)-(\X^*,\y^*)\Vert.
\end{align*}

Therefore,
\begin{align*}
& \Vert(\Z_{t+1},\w_{t+1})-(\X^*,\y^*)\Vert \le \Vert(\Z_{t+1},\w_{t+1})-(\X_{t},\y_{t})\Vert+\Vert(\X_{t},\y_{t})-(\X^*,\y^*)\Vert
\\ & \le \sqrt{(1-\eta_{t}^2\beta^2)^{-1}}\Vert(\X_{t},\y_{t})-(\X^*,\y^*)\Vert +\Vert(\X_{t},\y_{t})-(\X^*,\y^*)\Vert
\\ & = \Bigg(1+\frac{1}{\sqrt{1-\eta_{t}^2\beta^2}}\Bigg)\Vert(\X_{t},\y_{t})-(\X^*,\y^*)\Vert.
\end{align*}
\end{proof}

\section{Calculating the dual-gap in saddle-point problems} \label{appendixDualGapCalculation}

Set some point $(\widehat{\Z},\widehat{\w})\in{\lbrace\trace(\X)=\tau,\ \X\succeq 0\rbrace\times\mathcal{K}}$. Using the concavity of $f(\widehat{\Z},\cdot)$ and convexity of $f(\cdot,\widehat{\w})$, for all $\y\in\mathcal{K}$ and $\X\in\lbrace\trace(\X)=\tau,\ \X\succeq 0\rbrace$, it holds that
\begin{align*}
f(\widehat{\Z},\y)-f(\widehat{\Z},\widehat{\w}) & \le \langle\widehat{\w}-\y,-\nabla_{\y}f(\widehat{\Z},\widehat{\w})\rangle, 
\\ f(\widehat{\Z},\widehat{\w})-f(\X,\widehat{\w}) & \le \langle\widehat{\Z}-\X,\nabla_{\X}f(\widehat{\Z},\widehat{\w})\rangle.
\end{align*}

By taking the maximum of all $\y\in\mathcal{K}$ we obtain in particular that
\begin{align*}
f(\X^*,\y^*)-f(\widehat{\Z},\widehat{\w}) &  \le f(\widehat{\Z},\y^*)-f(\widehat{\Z},\widehat{\w}) \le \max_{\y\in\mathcal{K}}f(\widehat{\Z},\y)-f(\widehat{\Z},\widehat{\w})
\\ & \le \max_{\y\in\mathcal{K}}\langle\widehat{\w}-\y,-\nabla_{\y}f(\widehat{\Z},\widehat{\w})\rangle,
\end{align*}
and taking the maximum of all $\X\in\lbrace\trace(\X)=\tau,\ \X\succeq 0\rbrace$
\begin{align*}
f(\widehat{\Z},\widehat{\w})-f(\X^*,\y^*) & \le f(\widehat{\Z},\widehat{\w})-f(\X^*,\widehat{\w}) \le f(\widehat{\Z},\widehat{\w})-\min_{\substack{\trace(\X)=\tau,\\ \X\succeq 0}}f(\X,\widehat{\w}) 
\\ & \le \max_{\substack{\trace(\X)=\tau,\\ \X\succeq 0}}\langle\widehat{\Z}-\X,\nabla_{\X}f(\widehat{\Z},\widehat{\w})\rangle.
\end{align*}

Summing these two inequalities, we obtain a bound on the dual-gap at $(\widehat{\Z},\widehat{\w})$ which can be written as
\begin{align*}
g(\widehat{\Z})-g^* & \le \max_{\y\in\mathcal{K}}f(\widehat{\Z},\y)-\min_{\X\in\Sn}f(\X,\widehat{\w}) 
\\ & \le \max_{\substack{\trace(\X)=\tau,\\ \X\succeq 0}}\langle\widehat{\Z}-\X,\nabla_{\X}f(\widehat{\Z},\widehat{\w})\rangle - \min_{\y\in\mathcal{K}} \langle\widehat{\w}-\y,\nabla_{\y}f(\widehat{\Z},\widehat{\w})\rangle.
\end{align*} 

It is easy to see that the maximizer of the first term in the RHS of the above is $\tau\v_n\v_n^{\top}$ where $\v_n$ is the smallest eigenvector of $\nabla_{\X}f(\widehat{\Z},\widehat{\W})$, and the minimizer of the second term is $\Y_{i,j}=\sign(\nabla_{\Y}f(\widehat{\Z},\widehat{\W})_{i,j})$ for $\mathcal{K}=\lbrace\Y\in\reals^{n\times n}\ \vert\ \Vert\Y\Vert_{\infty}\le1\rbrace$ and $\nabla_{\y}f(\widehat{\Z},\widehat{\w})/\Vert\nabla_{\y}f(\widehat{\Z},\widehat{\w})\Vert_2$ for $\mathcal{K}=\lbrace\y\in\reals^{n}\ \vert\ \Vert\y\Vert_{2}\le1\rbrace$.

%%=============================================%%
%% For submissions to Nature Portfolio Journals %%
%% please use the heading ``Extended Data''.   %%
%%=============================================%%

%%=============================================================%%
%% Sample for another appendix section			       %%
%%=============================================================%%

%% \section{Example of another appendix section}\label{secA2}%
%% Appendices may be used for helpful, supporting or essential material that would otherwise 
%% clutter, break up or be distracting to the text. Appendices can consist of sections, figures, 
%% tables and equations etc.

\end{appendices}

%%===========================================================================================%%
%% If you are submitting to one of the Nature Portfolio journals, using the eJP submission   %%
%% system, please include the references within the manuscript file itself. You may do this  %%
%% by copying the reference list from your .bbl file, paste it into the main manuscript .tex %%
%% file, and delete the associated \verb+\bibliography+ commands.                            %%
%%===========================================================================================%%

\section{Compliance with Ethical Standards}
Dan Garber is supported by the ISRAEL SCIENCE FOUNDATION (grant No. 2267/22). No other funding was received for this work, directly or indirectly, and there are no competing interests.

\bibliography{sn-bibliography}% common bib file
%% if required, the content of .bbl file can be included here once bbl is generated
%%\input sn-article.bbl

\end{document}